\begin{document}
\setlength{\textwidth}{5.75in}

\makeatletter
\def\@citex[#1]#2{\if@filesw\immediate\write\@auxout{\string\citation{#2}}\fi
  \def\@citea{}\@cite{\@for\@citeb:=#2\do
    {\@citea\def\@citea{, }\@ifundefined
       {b@\@citeb}{{\bf ?}\@warning
       {Citation `\@citeb' on page \thepage \space undefined}}%
\hbox{\csname b@\@citeb\endcsname}}}{#1}}
\makeatother


\makeatletter
\def\doublespaced{\baselineskip=\normalbaselineskip	
    \multiply\baselineskip by 150			
    \divide\baselineskip by 100}			
\def\doublespace{\doublespaced}				
\makeatother

\makeatletter
\def\mitspaced{\baselineskip=\normalbaselineskip	
    \multiply\baselineskip by 115			
    \divide\baselineskip by 100}			
\def\mitspace{\mitspaced}
\makeatother

\makeatletter
\def\capamispaced{\baselineskip=\normalbaselineskip	
    \multiply\baselineskip by 105			
    \divide\baselineskip by 100}			
\def\capamispace{\capamispaced}
\makeatother

\makeatletter
\def\singlespaced{\baselineskip=\normalbaselineskip}	
\def\singlespace{\singlespaced}				
\makeatother

\makeatletter
\def\triplespaced{\baselineskip=\normalbaselineskip	
    \multiply\baselineskip by 3}			
\makeatother

\makeatletter
\def\widenspacing{\multiply\baselineskip by 125		
    \divide\baselineskip by 100}			
\def\whitespace{\widenspacing}				
\makeatother

\font\eightmsam=msam8
\font\ninemsam=msam9
\font\tenmsam=msam10
\newtheorem{Theorem}{Theorem}[section]
\newtheorem{Corollary}[Theorem]{Corollary}
\newtheorem{Proposition}[Theorem]{Proposition}
\newtheorem{Lemma}[Theorem]{Lemma}
\newtheorem{Claim}[Theorem]{Claim}
\newtheorem{Definition}[Theorem]{Definition}
\newtheorem{Example}[Theorem]{Example}
\newtheorem{Conjecture}[Theorem]{Conjecture}
\newtheorem{Question}[Theorem]{Question}
\newcommand{\BB}{{\ninemsam\char'04}}
\renewcommand{\Box}{\mbox{\BB}\medskip}
\newcommand{\optimal}{$\Theta(n^{1/2})$}
\newcommand{\joptimal}{$\Theta(n^{1/j})$}
\newcommand{\nfourth}{$\Theta(n^{1/4})$}
\newcommand{\nsixth}{$\Theta(n^{1/6})$}
\newcommand{\nfourthlog}{$\Theta(n^{1/4}\log(n))$}
\newcommand{\linear}{$\Theta(n)$}
\newcommand{\logn}{$\Theta(\log(n))$}

\newcommand{\N}{\mathbf N}
\newcommand{\Z}{\mathbf Z}
\newcommand{\R}{\mathbf R}
\setlength{\baselineskip}{0.3in}
\hoffset -0.4in
\doublespace
\title{Homotopy Relations for Digital Images
}

\author{Laurence Boxer
         \thanks{
    Department of Computer and Information Sciences,
    Niagara University,
    Niagara University, NY 14109, USA;
    and Department of Computer Science and Engineering,
    State University of New York at Buffalo.
    E-mail: boxer@niagara.edu
    }
\and{P. Christopher Staecker
     \thanks{
     Department of Mathematics, Fairfield 
     University, Fairfield, CT 06823-5195, USA.
     E-mail: cstaecker@fairfield.edu
     }
}
}

\date{ }
\maketitle

\begin{abstract}
We introduce three generalizations of homotopy equivalence 
in digital images, to allow us
to express whether a bounded and an unbounded digital image
with standard adjacencies
are similar with respect to homotopy.

We show that these three generalizations are not equivalent to ordinary homotopy equivalence, 
and give several examples. We show that, like homotopy equivalence, our three generalizations imply isomorphism of fundamental groups, and are preserved under wedges and Cartesian products.

Key words and phrases: {\em digital topology, digital image, homotopy, fundamental group}

2010 Mathematics Subject Classification:
Primary 55P10; Secondary 55Q05
\end{abstract}

\section{Introduction}
In {\em digital topology}, we study geometric and 
topological properties of digital images via tools 
adapted from geometric and algebraic topology. 
Prominent among these tools are digital versions 
of continuous functions and homotopy. Digital 
homotopy can be thought 
of as the topology of animated digital images.

In Euclidean topology, finite and infinite spaces 
can have the same homotopy type. E.g., 
the Euclidean line
$\R$ and a point have the same homotopy type.
The analogous statement is not true in 
digital topology; e.g., the digital line $\Z$ 
with the $c_1$ adjacency and a single point do not 
have the same digital homotopy type, despite 
sharing homotopy properties such as having trivial 
fundamental groups.
We introduce in this paper the notions of digital
homotopy similarity, same long homotopy type, and
same real homotopy type, all of which are more general
than digital homotopy equivalence and whose pointed 
versions are less general than having isomorphic
fundamental groups. These notions allow the possibility 
of considering a bounded and an unbounded digital image
as similar with respect to homotopy.

\section{Preliminaries}
We say a connected digital image $X$ has
{\em bounded diameter} if there is
a positive integer $n$ such that
if $x$ and $y$ are members of the
same component of $X$, then
there is a path in $X$ from
$x$ to $y$ of length at most $n$.

Much of the material in this section is quoted or 
paraphrased from other papers in digital topology, 
such as ~\cite{BoSt}.

\subsection{General Properties}
\label{dig-con}
Let $\N$ be the set of natural numbers,
$\N^* = \{0\} \cup \N$, and let
$\Z$ denote the set of integers.
Then $\Z^n$ is the set of lattice points in
Euclidean $n-$dimensional space.

Adjacency relations commonly used for
digital images include the following ~\cite{Kong}.
Two points $p$ and $q$ in $\Z^2$ are $8-adjacent$ if they
are distinct and differ by at most $1$ in each coordinate; $p$ and $q$ in $\Z^2$ are $4-adjacent$ if they are 
8-adjacent and
differ in exactly one coordinate.  Two points $p$ and $q$ in $\Z^3$ are $26-adjacent$ if they
are distinct and differ by at most $1$ in each coordinate; they are $18-adjacent$ if they are 26-adjacent and
differ in at most two coordinates; they are
$6-adjacent$ if they are 18-adjacent and
differ in exactly one coordinate.  For $k \in \{4,8,6,18,26\}$,
a $k-neighbor$ of a lattice point $p$ is a point that is
$k-$adjacent to $p$.

The adjacencies discussed above are generalized as follows.
Let $u, n$ be positive integers, $1 \leq u \leq n$.
Distinct points $p,q \in \Z^n$ are called 
$c_u$-{\em adjacent}, or $c_u$-{\em neighbors}, if
there are at most $u$ distinct coordinates $j$ for which
$|p_j-q_j| \, = \, 1$, and for all other coordinates $j$, $p_j = q_j$.
The notation $c_u$ represents the number of points $q \in \Z^n$
that are adjacent to a given point $p \in \Z^n$ in this sense. Thus the
values mentioned above:
if $n=1$ we have $c_1=2$; if $n=2$ we have $c_1=4$ and $c_2=8$;
if $n=3$ we have $c_1=6$, $c_2=18$, and $c_3=26$.
Yet more general adjacency relations are discussed in~\cite{Herman}.

Let $\kappa$ be an adjacency relation defined on $\Z^n$.
A digital image $X \subset \Z^n$ is $\kappa-connected$~\cite{Herman}
if and only if for
every pair of points $\{x,y\} \subset X$, $x \neq y$, there exists a set
$\{x_0, x_1,\ldots, x_c\} \subset X$ such that $x = x_0$, $x_c = y$,
and $x_j$ and $x_{j+1}$ are $\kappa-$neighbors, $i \in \{0,1,\ldots, c-1\}$.
A {\em $\kappa$-component} of $X$ is a maximal $\kappa$-connected subset of
$X$.

Often, we must assume some adjacency relation for
the {\em white pixels} in $\Z^n$,
{\em i.e.}, the pixels of $\Z^n \setminus X$ (the pixels that
belong to $X$ are sometimes referred to as {\em black pixels}).
In this paper, we are not concerned with adjacencies between
white pixels.

\begin{Definition} {\rm \cite{Boxer94}}
Let $a, b \in \Z$, $a < b$.  A {\rm digital interval}
is a set of the form
\[ [a,b]_{\Z} ~=~ \{z \in \Z ~|~ a \leq z \leq b\} \]
in which $2-$adjacency is assumed. $\Box$
\end{Definition}

\begin{Definition}
\label{dig-cont}
{\rm (\cite{Boxer99}; see also ~\cite{Rosenfeld})}
Let $X \subset \Z^{n_0}$, $Y \subset \Z^{n_1}$.
Let $f:X\rightarrow Y$ be a function.
Let $\kappa_j$ be an adjacency relation defined on 
$\Z^{n_j}$,
$i \in \{0,1\}$.  We say $f$ is
{\rm $(\kappa_0,\kappa_1)-$continuous} if for every
$\kappa_0-$connected subset $A$ of $X$, $f(A)$ is a
$\kappa_1-$connected subset of $Y$. $\Box$
\end{Definition}

See also~\cite{Chen1,Chen2}, where similar
notions are referred to as {\em 
immersions}, {\em gradually varied operators},
and {\em gradually varied mappings}.

We say a function satisfying Definition~\ref{dig-cont}
is {\em digitally continuous}.
This definition implies the following.

\begin{Proposition}
\label{cont-connect}
{\rm (\cite{Boxer99}; see also~\cite{Rosenfeld})}
Let $X$ and $Y$ be digital images.  Then the function
$f: X \rightarrow Y$ is $(\kappa_0,\kappa_1)$-continuous if and only if
for every $\{x_0, x_1\} \subset X$ such that $x_0$ and $x_1$ are
$\kappa_0-$adjacent, either $f(x_0) = f(x_1)$ or $f(x_0)$ and $f(x_1)$
are $\kappa_1-$adjacent. $\Box$
\end{Proposition}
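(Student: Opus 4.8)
The plan is to prove both directions of the biconditional directly from the definitions, using the single observation that a two-point set is $\kappa_1$-connected precisely when its two members are $\kappa_1$-adjacent, together with the fact that any two $\kappa$-adjacent points form a $\kappa$-connected set.

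For the forward implication, I would assume $f$ is $(\kappa_0,\kappa_1)$-continuous and fix a pair $\{x_0,x_1\} \subset X$ of $\kappa_0$-adjacent points. The set $A = \{x_0,x_1\}$ is then $\kappa_0$-connected, so by Definition~\ref{dig-cont} its image $f(A) = \{f(x_0),f(x_1)\}$ is $\kappa_1$-connected. If $f(x_0) = f(x_1)$, the desired conclusion holds immediately; otherwise $f(A)$ is a genuine two-point set, and such a set is $\kappa_1$-connected only when its two members are $\kappa_1$-adjacent, which yields the remaining alternative.

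For the converse, I would assume the stated pointwise condition and take an arbitrary $\kappa_0$-connected subset $A \subset X$, aiming to show $f(A)$ is $\kappa_1$-connected. Given two points of $f(A)$, write them as $f(a)$ and $f(a')$ with $a,a' \in A$; since $A$ is $\kappa_0$-connected, I would choose a sequence $a = z_0, z_1, \ldots, z_c = a'$ in $A$ with each consecutive pair $\kappa_0$-adjacent. Applying the hypothesis to each pair $\{z_j, z_{j+1}\}$ shows that $f(z_j)$ and $f(z_{j+1})$ are either equal or $\kappa_1$-adjacent, so the sequence $f(z_0), \ldots, f(z_c)$ traces a $\kappa_1$-walk in $f(A)$ from $f(a)$ to $f(a')$.

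The one point needing care is that the connectedness definition demands a genuine sequence of $\kappa_1$-neighbors, whereas the images $f(z_j)$ may repeat. I would resolve this by deleting consecutive duplicates from $f(z_0), \ldots, f(z_c)$: the shortened sequence still begins at $f(a)$, ends at $f(a')$, lies entirely in $f(A)$, and now has every consecutive pair genuinely $\kappa_1$-adjacent. If the deletions collapse everything to a single point, then $f(a) = f(a')$ and there is nothing to prove. This establishes that $f(A)$ is $\kappa_1$-connected and completes the argument. The proof is essentially routine; the only obstacle is this bookkeeping about repeated image points, which is simple once it is noticed.
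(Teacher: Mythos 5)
Your proof is correct. The paper itself states this proposition without proof, citing \cite{Boxer99}, so there is no in-paper argument to compare against; your two-directional argument from Definition~\ref{dig-cont} is the standard one, and you correctly handle the one genuine subtlety (collapsing consecutive repeated image points so that the resulting sequence consists of genuine $\kappa_1$-neighbors as the connectedness definition requires).
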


For example, if $\kappa$ is an adjacency relation on a digital image $Y$,
then $f:[a,b]_{\Z} \rightarrow Y$ is $(2,\kappa)-$continuous
if and only if for every
$\{c,c+1\} \subset [a,b]_{\Z}$, either $f(c)=f(c+1)$ or
$f(c)$ and $f(c+1)$ are $\kappa-$adjacent. If some function $f:[0,k]_{\Z} \rightarrow Y$ is $(2,\kappa)-$continuous, we say $f$ is a \emph{$\kappa$-path} from $f(0)$ to $f(k)$ of length $k$. 

We have the following.

\begin{Proposition}
\label{composition}
{\rm \cite{Boxer99}}
Composition preserves digital continuity, {\em i.e.},
if $f:X \rightarrow Y$ and $g:Y \rightarrow Z$ are, respectively,
$(\kappa_0,\kappa_1)-$continuous and $(\kappa_1,\kappa_2)-$continuous
functions, then the composite function $g \circ f: X \rightarrow Z$
is $(\kappa_0,\kappa_2)-$continuous. $\Box$
\end{Proposition}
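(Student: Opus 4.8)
The plan is to prove this directly from Definition~\ref{dig-cont}, exploiting the fact that digital continuity is phrased as the preservation of $\kappa$-connected subsets; this formulation chains naturally under composition. First I would fix an arbitrary $\kappa_0$-connected subset $A \subseteq X$. Since $f$ is $(\kappa_0,\kappa_1)$-continuous, $f(A)$ is a $\kappa_1$-connected subset of $Y$ by definition. The key observation is that $f(A)$ is therefore a legitimate input to the continuity hypothesis for $g$: because $g$ is $(\kappa_1,\kappa_2)$-continuous and $f(A)$ is $\kappa_1$-connected, the set $g(f(A))$ is a $\kappa_2$-connected subset of $Z$.

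Next I would identify $g(f(A))$ with $(g \circ f)(A)$, which holds set-theoretically for any functions. Thus $(g \circ f)(A)$ is $\kappa_2$-connected in $Z$. Since $A$ was an arbitrary $\kappa_0$-connected subset of $X$, the function $g \circ f$ carries every $\kappa_0$-connected subset of $X$ to a $\kappa_2$-connected subset of $Z$, which is precisely the assertion that $g \circ f$ is $(\kappa_0,\kappa_2)$-continuous.

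An equivalent route, if one prefers to argue at the level of adjacent pairs, uses Proposition~\ref{cont-connect}. Given $\kappa_0$-adjacent points $x_0, x_1 \in X$, that proposition applied to $f$ yields either $f(x_0) = f(x_1)$ or $f(x_0)$ and $f(x_1)$ are $\kappa_1$-adjacent. Applying $g$ and invoking Proposition~\ref{cont-connect} again for $g$, one checks by cases that in every instance either $(g \circ f)(x_0) = (g \circ f)(x_1)$ or these two images are $\kappa_2$-adjacent; the characterization then gives continuity of $g \circ f$.

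I do not expect a genuine obstacle here: the result is essentially immediate once the right formulation of continuity is used. The only point requiring a moment's care is recognizing that the hypothesis on $g$ demands a $\kappa_1$-connected input, and that this is supplied exactly by the conclusion of $f$'s continuity applied to $A$; the definition-based argument of the first two paragraphs is the cleanest because it makes this dovetailing transparent without any case analysis.
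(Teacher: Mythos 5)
Your argument is correct: the definition of $(\kappa_0,\kappa_1)$-continuity as preservation of connected subsets composes immediately, since $f(A)$ being $\kappa_1$-connected is exactly the hypothesis needed to apply the continuity of $g$, and $(g\circ f)(A)=g(f(A))$. The paper itself gives no proof --- it cites the result from \cite{Boxer99} --- but your first argument is the standard one, and your alternative via Proposition~\ref{cont-connect} (the adjacent-pairs characterization) is equally valid.
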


Digital images $(X,\kappa)$ and $(Y,\lambda)$ are
$(\kappa, \lambda)-isomorphic$ (called
$(\kappa, \lambda)-homeomorphic$ in ~\cite{Boxer94,Boxer05}) if there
is a bijection $h: X \rightarrow Y$ that is $(\kappa,\lambda)$-continuous,
such that the function $h^{-1}: Y \rightarrow X$ is
$(\lambda, \kappa)$-continuous.



\subsection{Digital homotopy}
A homotopy between continuous functions may be thought of as
a continuous deformation of one of the functions into the 
other over a finite time period.

\begin{Definition}{\rm (\cite{Boxer99}; see also \cite{Khalimsky})}
\label{htpy-2nd-def}
Let $X$ and $Y$ be digital images.
Let $f,g: X \rightarrow Y$ be $(\kappa,\kappa')$-continuous functions.
Suppose there is a positive integer $m$ and a function
$F: X \times [0,m]_{\Z} \rightarrow Y$
such that

\begin{itemize}
\item for all $x \in X$, $F(x,0) = f(x)$ and $F(x,m) = g(x)$;
\item for all $x \in X$, the induced function
      $F_x: [0,m]_{\Z} \rightarrow Y$ defined by
          \[ F_x(t) ~=~ F(x,t) \mbox{ for all } t \in [0,m]_{\Z} \]
          is $(2,\kappa')-$continuous. That is, $F_x(t)$ is a path in $Y$.
\item for all $t \in [0,m]_{\Z}$, the induced function
         $F_t: X \rightarrow Y$ defined by
          \[ F_t(x) ~=~ F(x,t) \mbox{ for all } x \in  X \]
          is $(\kappa,\kappa')-$continuous.
\end{itemize}
Then $F$ is a {\rm digital $(\kappa,\kappa')-$homotopy between} $f$ and
$g$, and $f$ and $g$ are {\rm digitally $(\kappa,\kappa')-$homotopic in} $Y$.
If for some $x \in X$ we have $F(x,t)=F(x,0)$ for all
$t \in [0,m]_{\Z}$, we say $F$ {\rm holds $x$ fixed}, and $F$ is a {\rm pointed homotopy}.
$\Box$
\end{Definition}

We indicate a pair of homotopic functions as
described above by $f \simeq_{\kappa,\kappa'} g$.
When the adjacency relations $\kappa$ and $\kappa'$ are understood in context,
we say $f$ and $g$ are {\em digitally homotopic}
to abbreviate ``digitally 
$(\kappa,\kappa')-$homotopic in $Y$," and write
$f \simeq g$.

\begin{Proposition}
\label{htpy-equiv-rel}
{\rm ~\cite{Khalimsky,Boxer99}}
Digital homotopy is an equivalence relation among
digitally continuous functions $f: X \rightarrow Y$.
$\Box$
\end{Proposition}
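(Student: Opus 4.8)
The plan is to verify the three defining properties of an equivalence relation---reflexivity, symmetry, and transitivity---by exhibiting an explicit homotopy in each case and checking that it satisfies the three bulleted conditions of Definition~\ref{htpy-2nd-def}. Throughout I would lean on Proposition~\ref{cont-connect} to reduce every continuity check to a statement about pairs of adjacent points, which keeps the verifications short. For reflexivity, given a $(\kappa,\kappa')$-continuous $f: X \to Y$, I would take $m=1$ and use the constant homotopy $F(x,t) = f(x)$: then $F(x,0)=F(x,1)=f(x)$, each slice $F_x$ is a constant map and hence a path, and each $F_t = f$ is continuous by hypothesis.

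For symmetry, suppose $f \simeq_{\kappa,\kappa'} g$ via $F: X \times [0,m]_{\Z} \to Y$. I would reverse time by setting $G(x,t) = F(x, m-t)$. The endpoint conditions swap correctly, since $G(x,0)=F(x,m)=g(x)$ and $G(x,m)=F(x,0)=f(x)$. The reversal $t \mapsto m-t$ is a self-isomorphism of $[0,m]_{\Z}$, so each slice $G_x$ is the composite of $F_x$ with this isomorphism and is again a path by Proposition~\ref{composition}, while each $G_t$ equals $F_{m-t}$ and is therefore $(\kappa,\kappa')$-continuous.

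For transitivity, suppose $f \simeq_{\kappa,\kappa'} g$ via $F: X \times [0,m]_{\Z} \to Y$ and $g \simeq_{\kappa,\kappa'} h$ via $G: X \times [0,n]_{\Z} \to Y$. I would concatenate the two homotopies, defining $H: X \times [0,m+n]_{\Z} \to Y$ by $H(x,t) = F(x,t)$ for $0 \le t \le m$ and $H(x,t) = G(x,t-m)$ for $m \le t \le m+n$. This is well defined at the seam $t=m$ precisely because $F(x,m) = g(x) = G(x,0)$, which is the one place the shared intermediate map $g$ is used. The endpoint conditions give $H(x,0)=f(x)$ and $H(x,m+n)=h(x)$, and each fixed-time slice $H_t$ agrees with either some $F_t$ or some $G_{t-m}$ and is thus continuous.

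The step I expect to require the most care is verifying that each slice $H_x$ of the concatenated homotopy is a $(2,\kappa')$-continuous path. By Proposition~\ref{cont-connect} it suffices to examine consecutive integers $t, t+1$ in $[0,m+n]_{\Z}$: if both lie in $[0,m]_{\Z}$ the path property of $F_x$ applies, if both lie in $[m,m+n]_{\Z}$ the path property of $G_x$ applies, and the only boundary index $t=m$ lies in both intervals, so it is already covered by the case $\{m, m+1\}$ handled via $G_x$. Hence no adjacency is left unchecked, the three conditions of Definition~\ref{htpy-2nd-def} hold for $H$, and $H$ witnesses $f \simeq_{\kappa,\kappa'} h$, completing the proof that digital homotopy is an equivalence relation.
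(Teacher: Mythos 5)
Your proof is correct. The paper itself gives no argument for this proposition---it is quoted as a known result from \cite{Khalimsky,Boxer99}---and your constant-homotopy/time-reversal/concatenation argument is precisely the standard proof found in those sources, with the seam check at $t=m$ handled properly.
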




\begin{Definition}
{\rm ~\cite{Boxer05}}
\label{htpy-type}
Let $f: X \rightarrow Y$ be a $(\kappa,\kappa')$-continuous function and let
$g: Y \rightarrow X$ be a $(\kappa',\kappa)$-continuous function such that
\[ f \circ g \simeq_{\kappa',\kappa'} 1_X \mbox{ and }
   g \circ f \simeq_{\kappa,\kappa} 1_Y. \]
Then we say $X$ and $Y$ have the {\rm same $(\kappa,\kappa')$-homotopy type}
and that $X$ and $Y$ are $(\kappa,\kappa')$-{\rm homotopy equivalent}, denoted 
$X \simeq_{\kappa,\kappa'} Y$ or as
$X \simeq Y$ when $\kappa$ and $\kappa'$ are
understood.
If for some $x_0 \in X$ and $y_0 \in Y$ we have
$f(x_0)=y_0$, $g(y_0)=x_0$,
and there exists a homotopy between $f \circ g$
and $1_X$ that holds $x_0$ fixed, and 
a homotopy between $g \circ f$
and $1_Y$ that holds $y_0$ fixed, we say
$(X,x_0,\kappa)$ and $(Y,y_0,\kappa')$ are
{\rm pointed homotopy equivalent} and that $(X,x_0)$ 
and $(Y,y_0)$ have the 
{\rm same pointed homotopy type}, denoted 
$(X,x_0) \simeq_{\kappa,\kappa'} (Y,y_0)$ or as
$(X,x_0) \simeq (Y,y_0)$ when 
$\kappa$ and $\kappa'$ are understood.
$\Box$
\end{Definition}

It is easily seen, from 
Proposition~\ref{htpy-equiv-rel}, that having the
same homotopy type (respectively, the same
pointed homotopy type) is an equivalence relation
among digital images (respectively, among pointed
digital images).

For $p \in Y$, we denote by $\overline{p}$ the constant
function $\overline{p}:X \rightarrow Y$ defined by $\overline{p}(x) = p$ for
all $x \in X$.

\begin{Definition}
\label{htpy-trivial}
A digital image $(X,\kappa)$ is
$\kappa$-{\rm contractible \cite{Khalimsky,Boxer94}}
if its identity map is $(\kappa, \kappa)$-homotopic to a 
constant function $\overline{p}$ for some $p \in X$.
If the homotopy of the contraction holds $p$ fixed,
we say $(X, p, \kappa)$ is {\rm pointed $\kappa$-contractible}.
$\Box$
\end{Definition}

When $\kappa$ is understood, we speak of {\em contractibility}
for short.
It is easily seen that $X$
is contractible
if and only if $X$
has the homotopy type 
of a one-point digital image.





\subsection{Fundamental group}
Inspired by the fundamental group of a topological space,
several researchers~\cite{Stout,Kong,Boxer99,BoSt}
have developed versions of a fundamental
group for digital images. These are not all equivalent; 
however, it is shown in~\cite{BoSt} that the 
version of the fundamental group developed in that 
paper is equivalent to the version in ~\cite{Boxer99}.

In the following, we present the version of
the digital fundamental group developed in~\cite{BoSt}.

Given a digital image $X$, a continuous function 
$f: \N^* \rightarrow X$ is an {\em eventually
constant path} or {\em EC path} if there is some point 
$c \in X$ and some $N \geq 0$ such that $f(x) = c$ whenever
$x \geq N$. We abbreviate the latter by $f(\infty) = 
c$. The endpoints of an EC path $f$ are
the two points $f(0)$ and $f(\infty)$.
If $f$ is an EC path and $f(0) = f(\infty)$, 
we say $f$ is an {\em EC loop}, and $f(0)$ is 
called the {\em basepoint} of this loop.

We say that a homotopy $H: [0,k]_{\Z} \times \N^* \to X$ 
between EC paths is an 
{\em EC homotopy} when the function $H_s: \N^* \rightarrow X$
defined by $H_s(t) = H(s, t)$ is an EC path for 
all $s \in [0, k]_{\Z}$.
To indicate an EC homotopy, we write
$f \simeq ^{EC} g$, or $f \simeq_{\kappa} ^{EC} g$
if it is desirable to state the adjacency $\kappa$ of $X$. 
We say an EC homotopy $H$
{\em holds the endpoints fixed} when 
$H_t(0) = f(0) = g(0)$ and there is a $c \in \N$ such that 
$n \geq c$ implies $H_t(n) = f(n) = g(n)$ for all $t$.

Given an EC loop $f: \N^* \rightarrow X$, we let
\[ N_f = \min\{m \in \N^* \, | \, n \geq m
         \mbox{ implies } f(n)=f(m)\}.
\]

For $x_0 \in X$, suppose $f_0, f_1: \N^* \rightarrow X$
are $x_0$-based EC loops. Define an $x_0$-based EC loop 
$f_0 * f_1: \N^* \rightarrow X$ via
\[ f_0 * f_1(n)= \left \{ \begin{array}{ll}
                  f_0(n) & \mbox{if } 0 \leq n \leq N_{f_0}; \\
                  f_1(n-N_{f_0}) & \mbox{if } N_{f_0} \leq n.
                  \end{array} \right .
\]

Given an $x_0$-based EC loop 
$f: \N^* \rightarrow X$, we denote by
$[f]_X$, or $[f]$ when $X$ is understood, the
equivalence class of EC loops that are 
homotopic to $f$ in $X$ holding the endpoints fixed. 
We let $\Pi_1^{\kappa}(X,x_0)$ be the set of 
all such sets $[f]$. The $*$ operation enables us to 
define an operation on $\Pi_1^{\kappa}(X,x_0)$ via 
\[ [f] \cdot [g] = [f * g].
\]
This operation is well defined, and makes $\Pi_1^{\kappa}(X,x_0)$
into a group in which the identity element is the class 
$[\overline{x_0}]$ of the constant loop $\overline{x_0}$ and in
which inverse elements are given by $[f]^{-1}=[f^{-1}]$, where
$f^{-1}: {\N^*} \rightarrow X$ is the EC loop defined by
\[ f^{-1}(n) = \left \{ \begin{array}{ll}
               f(N_f - n) & \mbox{if } 0 \leq n \leq N_f; \\
               x_0 & \mbox{if } n \geq N_f.
               \end{array} \right .
\]



\section{Homotopically similar images}
\label{sim-sec}
In Euclidean topology, it is often possible to say that a 
bounded space $X$ and an unbounded space $Y$ 
have the same homotopy type.  For example, a single 
point and $n$-dimensional Euclidean space 
${\R}^n$ have the same homotopy type, 
roughly since the points of ${\R}^n$ can
be moved continuously within ${\R}^n$ over 
a finite time interval to a single point. 
However, Definition~\ref{htpy-2nd-def} does not 
permit a digital image with unbounded diameter
to have the homotopy type of an image with bounded diameter, since the 
second factor of the domain of a homotopy is a 
finite interval $[0,m]_{\Z}$. In this paper, we seek to 
circumvent this limitation. One of the ways we do so depends on the following.

\begin{Definition}
\label{htpy-sim-def}
Let $X$ and $Y$ be digital images.
We say $(X,\kappa)$ and $(Y,\lambda)$
are {\rm homotopically similar}, denoted
$X \simeq_{\kappa,\lambda}^s Y$, if
there exist subsets
$\{X_j\}_{j=1}^{\infty}$ of $X$ and
$\{Y_j\}_{j=1}^{\infty}$ of $Y$ such that:
\begin{itemize}
\item $X = \bigcup_{j=1}^{\infty} X_j$, 
$Y = \bigcup_{j=1}^{\infty} Y_j$, 
and, for all $j$,
$X_j \subset X_{j+1}$,
$Y_j \subset Y_{j+1}$. 
\item There are continuous functions
      $f_j: X_j \rightarrow Y_j$, 
      $g_j: Y_j \rightarrow X_j$ such that
      $g_j \circ f_j \simeq_{\kappa,\kappa} 1_{X_j}$ and
      $f_j \circ g_j \simeq_{\lambda,\lambda} 1_{Y_j}$.
\item For $v \leq w$, 
      $f_w|X_v \simeq_{\kappa,\lambda} f_v$ in $Y_v$ and
      $g_w|Y_v \simeq_{\lambda,\kappa} g_v$ in $X_v$.
\end{itemize}

If all of these homotopies are pointed with respect to
some $x_1 \in X_1$ and $y_1 \in Y_1$,
we say $(X,x_1)$ and $(Y,y_1)$ are 
{\rm pointed homotopically similar},
denoted $(X,x_1) \simeq_{\kappa,\lambda}^s (Y,y_1)$
or $(X,x_1) \simeq^s (Y,y_1)$ when $\kappa$ and
$\lambda$ are understood.
$\Box$
\end{Definition}

\begin{Proposition}
\label{generalize}
If $X \simeq_{\kappa,\lambda} Y$, then 
$X \simeq_{\kappa,\lambda}^s Y$.
If $(X,x_1) \simeq_{\kappa,\lambda} (Y,y_1)$, then 
$(X,x_1) \simeq_{\kappa,\lambda}^s (Y,y_1)$.
\end{Proposition}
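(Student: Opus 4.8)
The plan is to realize homotopy equivalence as the degenerate case of homotopic similarity in which the exhausting families are constant. First I would invoke the hypothesis $X \simeq_{\kappa,\lambda} Y$ to obtain, via Definition~\ref{htpy-type}, continuous functions $f: X \rightarrow Y$ and $g: Y \rightarrow X$ with $g \circ f \simeq_{\kappa,\kappa} 1_X$ and $f \circ g \simeq_{\lambda,\lambda} 1_Y$. I would then set $X_j = X$ and $Y_j = Y$ for every $j$, and take $f_j = f$, $g_j = g$ for all $j$.

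Next I would verify the three clauses of Definition~\ref{htpy-sim-def} against these choices. The first clause holds trivially, since $\bigcup_{j=1}^{\infty} X_j = X$ and $\bigcup_{j=1}^{\infty} Y_j = Y$, while the nesting conditions $X_j \subset X_{j+1}$ and $Y_j \subset Y_{j+1}$ hold with equality. The second clause is exactly the homotopy equivalence supplied by the hypothesis, since $g_j \circ f_j = g \circ f \simeq_{\kappa,\kappa} 1_{X_j}$ and $f_j \circ g_j = f \circ g \simeq_{\lambda,\lambda} 1_{Y_j}$. For the third clause, I would observe that for $v \leq w$ we have $f_w|X_v = f = f_v$ and $g_w|Y_v = g = g_v$ as functions; equal continuous functions are homotopic via the constant homotopy, so $f_w|X_v \simeq_{\kappa,\lambda} f_v$ in $Y_v$ and $g_w|Y_v \simeq_{\lambda,\kappa} g_v$ in $X_v$. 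This establishes $X \simeq_{\kappa,\lambda}^s Y$.

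For the pointed statement I would begin instead from a pointed homotopy equivalence, so that $f(x_0) = y_0$, $g(y_0) = x_0$, and the homotopies $g \circ f \simeq 1_X$ and $f \circ g \simeq 1_Y$ hold $x_0$ and $y_0$ fixed, respectively. Choosing $x_1 = x_0 \in X_1 = X$ and $y_1 = y_0 \in Y_1 = Y$ together with the same constant families, the homotopies witnessing the second clause are pointed by hypothesis, while the constant homotopies witnessing the third clause hold every point fixed, in particular $x_1$ and $y_1$. Hence $(X,x_1) \simeq_{\kappa,\lambda}^s (Y,y_1)$.

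I do not anticipate a genuine obstacle here: the content of the proposition is essentially an unwinding of the two definitions, and the constant families reduce each clause to either a triviality or the hypothesis itself. The only point requiring a moment's care is the third clause, where one must note that the required homotopy between equal restrictions is furnished by the constant homotopy, which is moreover pointed, so that the argument transfers verbatim to the pointed setting.
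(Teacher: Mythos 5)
Your proposal is correct and follows exactly the paper's argument: take the constant families $X_j = X$, $Y_j = Y$, $f_j = f$, $g_j = g$, and observe that the clauses of Definition~\ref{htpy-sim-def} reduce to trivialities or to the hypothesis. You simply spell out the verification (including the pointed case) in more detail than the paper does.
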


{\em Proof}: Let $f: X \rightarrow Y$ and
$g: Y \rightarrow X$ realize a homotopy equivalence
between $(X, \kappa)$ and $(Y,\lambda)$, or a
pointed homotopy equivalence between 
$(X, \kappa, x_1)$ and $(Y,\lambda, y_1)$. Then
corresponding to Definition~\ref{htpy-sim-def},
we can take, for all $j$, $X_j=X$, $Y_j=Y$, 
$f_j=f$, $g_j=g$.
$\Box$

Although Definition~\ref{htpy-sim-def} does not
require it, we often choose the $X_j$ and $Y_j$
to be finite sets.
Example~\ref{contractible} has an image with bounded diameter and 
an image with unbounded diameter that are not homotopy equivalent 
but are pointed homotopically similar.

\begin{Theorem}
\label{equiv-for-bdd}
Let $X$ and $Y$ be finite digital images. Then
$X \simeq_{\kappa,\lambda} Y$ if and only if
$X \simeq_{\kappa,\lambda}^s Y$, and
$(X,x_0)\simeq_{\kappa,\lambda}(Y,y_0)$ 
if and only if 
$(X,x_0)\simeq_{\kappa,\lambda}^s (Y,y_0)$.
\end{Theorem}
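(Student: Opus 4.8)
The plan is to prove both biconditionals one direction at a time. The forward implications ($\Rightarrow$) are already supplied by Proposition~\ref{generalize} (take every $X_j=X$ and $Y_j=Y$), so all of the content lies in the reverse implications ($\Leftarrow$). The engine of the whole argument is a single observation: a nondecreasing union of subsets of a \emph{finite} set must stabilize. Everything else is unwinding definitions.

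First I would record the stabilization step. Assume $X \simeq_{\kappa,\lambda}^s Y$ with witnessing families $\{X_j\}$ and $\{Y_j\}$ as in Definition~\ref{htpy-sim-def}. Since $X=\bigcup_j X_j$ is finite and $X_1 \subseteq X_2 \subseteq \cdots$, the chain can strictly grow only finitely many times, so there is an index $J_X$ with $X_j = X$ for all $j \geq J_X$; likewise there is $J_Y$ with $Y_j = Y$ for all $j \geq J_Y$. Put $J = \max(J_X,J_Y)$.

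Next I would simply read off the homotopy equivalence. For $j \geq J$ the second bullet of Definition~\ref{htpy-sim-def} now provides continuous maps $f_j : X \to Y$ and $g_j : Y \to X$ (because $X_j=X$ and $Y_j=Y$) with $g_j \circ f_j \simeq_{\kappa,\kappa} 1_X$ and $f_j \circ g_j \simeq_{\lambda,\lambda} 1_Y$. This is exactly the data demanded by Definition~\ref{htpy-type}, so choosing $f = f_J$ and $g = g_J$ witnesses $X \simeq_{\kappa,\lambda} Y$. It is worth noting that the third (compatibility) bullet of Definition~\ref{htpy-sim-def} plays no role in this direction: once the families have stabilized, the single pair $(f_J,g_J)$ already does all the work.

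For the pointed statement I would repeat the stabilization verbatim and then verify that the basepoints behave. With $x_0 \in X_1$ and $y_0 \in Y_1$, the requirement that the similarity homotopies be pointed with respect to $x_0$ and $y_0$ forces $f_j(x_0)=y_0$ and $g_j(y_0)=x_0$, while the composition homotopies $g_j \circ f_j \simeq 1_X$ and $f_j \circ g_j \simeq 1_Y$ are themselves pointed, i.e.\ they hold $x_0$ and $y_0$ fixed respectively; for $j \geq J$ these are precisely the hypotheses of pointed homotopy equivalence in Definition~\ref{htpy-type}, giving $(X,x_0)\simeq_{\kappa,\lambda}(Y,y_0)$. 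The one place demanding genuine care is exactly this basepoint bookkeeping: one must confirm that ``pointed with respect to $x_0$ and $y_0$'' really yields basepoint-preserving maps $f_J,g_J$ together with basepoint-fixing composition homotopies, and not merely homotopies that hold some unspecified point fixed (a constant ``swap'' family satisfies the weaker reading while failing $f_j(x_0)=y_0$). Pinning this down is the main obstacle; the unpointed case, by contrast, is immediate once stabilization is in hand.
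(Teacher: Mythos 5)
Your proof is correct and follows essentially the same route as the paper's: both reduce the reverse implication to the observation that the increasing chains $\{X_j\}$ and $\{Y_j\}$ must stabilize at $X$ and $Y$ because these sets are finite, after which $f_J$ and $g_J$ directly witness the (pointed) homotopy equivalence. Your extra care about the basepoint bookkeeping in the pointed case is a reasonable elaboration of what the paper dismisses with ``a similar argument.''
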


{\em Proof}:
That $X \simeq_{\kappa,\lambda} Y$ implies
$X \simeq_{\kappa,\lambda}^s Y$ is shown in
Proposition~\ref{generalize}. To show the converse: if 
$X \simeq_{\kappa,\lambda}^s Y$, let 
$\{X_j,Y_j,f_j,g_j\}_{j=1}^{\infty}$ be as in
Definition~\ref{htpy-sim-def}. Since $X$ and $Y$ are
finite, there exists a
positive integer $m$ such that $i \geq m$ implies
$X=X_i$ and $Y=Y_i$. Then $f_m: X=X_m \rightarrow Y_m=Y$
and $g_m: Y=Y_m \rightarrow X_m=X$ satisfy
\[ g_m \circ f_m \simeq_{\kappa,\kappa} 1_X, ~ 
   f_m \circ g_m \simeq_{\lambda,\lambda} 1_Y. \]
Thus, $X \simeq_{\kappa,\lambda} Y$.

A similar argument
yields the pointed assertion. $\Box$

In Example~\ref{2-infinite}, we show that two 
digital images with unbounded diameters can be homotopically
similar but not homotopy equivalent.

\begin{Theorem}
\label{equiv-rel}
Homotopic similarity and pointed homotopic similarity are
reflexive and symmetric relations among digital images. 
\end{Theorem}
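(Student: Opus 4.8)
The plan is to check the two properties directly against Definition~\ref{htpy-sim-def}, exploiting the fact that the definition is built symmetrically out of ingredients that are themselves equivalence relations, by Proposition~\ref{htpy-equiv-rel}.

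For reflexivity, to establish $X \simeq_{\kappa,\kappa}^s X$ I would supply the constant exhaustion $X_j = X$ for both copies together with the identity maps $f_j = g_j = 1_X$. The union and nesting requirements hold trivially, since $X = \bigcup_{j=1}^{\infty} X$ and $X \subset X$. The composite conditions $g_j \circ f_j = 1_X \simeq_{\kappa,\kappa} 1_X$ and $f_j \circ g_j = 1_X \simeq_{\kappa,\kappa} 1_X$ follow from reflexivity of digital homotopy. Finally, for $v \leq w$ the restriction $f_w|X_v$ equals $1_X = f_v$ identically, so the compatibility homotopies are again instances of reflexivity. In the pointed case I take $x_1 = y_1$ and the constant homotopy, which holds $x_1$ fixed.

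For symmetry, suppose $X \simeq_{\kappa,\lambda}^s Y$ is witnessed by $\{X_j,Y_j,f_j,g_j\}_{j=1}^{\infty}$. To produce a witness for $Y \simeq_{\lambda,\kappa}^s X$ I would interchange the two families: use $\{Y_j\}$ as the exhaustion of the now-first image $Y$, use $\{X_j\}$ as the exhaustion of $X$, and assign $g_j: Y_j \rightarrow X_j$ and $f_j: X_j \rightarrow Y_j$ to the roles of the forward and backward maps, respectively. Each clause of the definition for $Y \simeq_{\lambda,\kappa}^s X$ then matches a clause of the given data verbatim: the required relations $f_j \circ g_j \simeq_{\lambda,\lambda} 1_{Y_j}$ and $g_j \circ f_j \simeq_{\kappa,\kappa} 1_{X_j}$ are precisely the original composite conditions, and the required compatibilities $g_w|Y_v \simeq_{\lambda,\kappa} g_v$ and $f_w|X_v \simeq_{\kappa,\lambda} f_v$ are exactly those assumed. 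The pointed statement follows identically, with the pair $(x_1,y_1)$ replaced by $(y_1,x_1)$, since the original homotopies already hold the relevant points fixed.

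The verifications here are all immediate; the only point deserving attention is the bookkeeping observation that Definition~\ref{htpy-sim-def} is genuinely invariant under swapping $(X,\kappa)$ with $(Y,\lambda)$, which holds because its three bullet conditions come in matched $f$/$g$ pairs. I expect no real obstacle for reflexivity or symmetry. It is transitivity, conspicuously absent from the statement, that would be delicate: composing two similarity data would require reconciling an exhaustion of the intermediate image produced in two different ways, and so is not asserted here.
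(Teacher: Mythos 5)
Your proof is correct and takes the same route as the paper, which simply observes that the assertion follows easily from Definition~\ref{htpy-sim-def}; you have merely written out the details (constant exhaustion with identity maps for reflexivity, swapping the roles of the $X_j,f_j$ and $Y_j,g_j$ data for symmetry), and the verifications are all as you describe.
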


\begin{proof} The assertion follows easily from 
Definition~\ref{htpy-sim-def}. 
\end{proof}

At the current writing, we do not have an answer for the following.

\begin{Question} Is the homotopy
similarity (unpointed or pointed) of digital images
a transitive relation?
\end{Question}

This appears to be a difficult problem. We need a positive resolution to this question if we are to conclude that homotopic similarity is an equivalence relation. Notice that
if $A \simeq^s B$ via subsets
$\{A_i\}_{i=1}^{\infty}$ of $A$ and
$\{B_i\}_{i=1}^{\infty}$ of $B$,
and $B \simeq^s C$ via subsets
$\{B_i'\}_{i=1}^{\infty}$ of $B$
and $\{C_i\}_{i=1}^{\infty}$ of $C$,
we would have $A \simeq^s C$ if 
$B_i = B_i'$ for infinitely many $i$,
but one can easily construct examples for which 
the latter is not satisfied. 

We show transitivity for the 
following special case.

\begin{Theorem}
\label{sim-finite}
Let $B$ be finite.
Let $A \simeq^s B \simeq^s C$.
Then $A \simeq^s C$.
If $(A,a_0) \simeq^s (B,b_0) \simeq^s (C,c_0)$, then $(A,a_0) \simeq^s (C,c_0)$.
\end{Theorem}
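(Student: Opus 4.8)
The plan is to exploit the finiteness of $B$ exactly as in the remark preceding the statement: the obstruction to transitivity is that the two exhausting sequences for $B$ need not agree, and finiteness forces them to agree for all large indices. First I would unpack the hypotheses. Let $A \simeq^s B$ be witnessed by $\{A_i\}, \{B_i\}$ with maps $f_i : A_i \to B_i$, $g_i : B_i \to A_i$, and let $B \simeq^s C$ be witnessed by $\{B_i'\}, \{C_i\}$ with maps $p_i : B_i' \to C_i$, $q_i : C_i \to B_i'$, each family satisfying the three clauses of Definition~\ref{htpy-sim-def}. Since $B$ is finite and $\{B_i\}, \{B_i'\}$ are increasing sequences of subsets whose unions are $B$, each sequence is eventually constant equal to $B$; choose $m$ with $B_i = B = B_i'$ for all $i \ge m$. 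Discarding the first $m-1$ terms of every family and reindexing does not disturb any clause of Definition~\ref{htpy-sim-def} (the union clause survives because the sequences are increasing, and the equivalences and compatibilities are simply inherited by the retained indices). Hence I may assume from the outset that $B_i = B = B_i'$ for every $i$; this is the case ``$B_i = B_i'$ for infinitely many $i$'' already noted to suffice.

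With the $B$-layers identified, I would define candidate data for $A \simeq^s C$ by keeping the subsets $\{A_i\}$ and $\{C_i\}$ (which still exhaust $A$ and $C$ and are nested), and setting $P_i = p_i \circ f_i : A_i \to C_i$ and $Q_i = g_i \circ q_i : C_i \to A_i$; these are well defined and continuous by Proposition~\ref{composition}, since $f_i$ and $q_i$ now take values in $B = B_i = B_i'$, the common domain of $p_i$ and $g_i$. For the second clause I would insert the $B$-level homotopies: writing $Q_i \circ P_i = g_i \circ (q_i \circ p_i) \circ f_i$ and using $q_i \circ p_i \simeq 1_B$, composition with $g_i$ on the left and $f_i$ on the right (a homotopy stays a homotopy after composing with continuous maps) gives $Q_i \circ P_i \simeq g_i \circ f_i \simeq 1_{A_i}$, and transitivity (Proposition~\ref{htpy-equiv-rel}) yields $Q_i \circ P_i \simeq 1_{A_i}$; symmetrically $P_i \circ Q_i \simeq p_i \circ q_i \simeq 1_{C_i}$.

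The one point demanding care is the third clause, because the compatibility homotopies must land in the smaller images $C_v$ and $A_v$, not in $C_w$ and $A_w$. For $v \le w$ I would factor $P_w|A_v = p_w \circ (f_w|A_v)$ and handle the two maps separately, in the right order. Since $B_v' = B = B_w'$, the compatibility clause for $B \simeq^s C$ reads $p_w \simeq p_v$ in $C_v$; precomposing that homotopy with $f_w|A_v$ shows $p_w \circ (f_w|A_v) \simeq p_v \circ (f_w|A_v)$ in $C_v$. Next, the compatibility clause for $A \simeq^s B$ gives $f_w|A_v \simeq f_v$ in $B_v = B$; postcomposing with $p_v : B \to C_v$ shows $p_v \circ (f_w|A_v) \simeq p_v \circ f_v = P_v$ in $C_v$. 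Transitivity then gives $P_w|A_v \simeq P_v$ in $C_v$, and the mirror-image argument (using $q_w|C_v \simeq q_v$ in $B$ and $g_w \simeq g_v$ in $A_v$) gives $Q_w|C_v \simeq Q_v$ in $A_v$. This verifies all three clauses, so $A \simeq^s C$.

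For the pointed statement the construction is identical, the only extra observation being that the $B$-basepoint is the common point $b_0$ in both similarities; since every homotopy used above is obtained by composing a given basepoint-fixing homotopy with maps that carry $a_0, b_0, c_0$ to one another, each constructed homotopy again holds $a_0$ or $c_0$ fixed. I expect the only genuine obstacle to be the bookkeeping in the previous paragraph — making sure each compatibility homotopy is routed through the lower-index map so that it takes values in $C_v$ (respectively $A_v$) rather than the larger layer; everything else is forced once the finiteness of $B$ collapses the two $B$-exhaustions to $B$ itself.
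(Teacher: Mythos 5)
Your proposal is correct and follows essentially the same route as the paper's own proof, which only sketches the argument: use finiteness of $B$ to force $B_i = B = B_i'$ for all large $i$, then compose the two families of maps and homotopies and invoke transitivity of digital homotopy. Your version simply fills in the details the paper leaves implicit (the reindexing, the definitions $P_i = p_i \circ f_i$, $Q_i = g_i \circ q_i$, and the careful routing of the compatibility homotopies into $C_v$ and $A_v$), all of which check out.
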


\begin{proof} We sketch a proof
for the unpointed assertion. A
similar argument yields the
pointed assertion.

Let $A \simeq^s B$ via
$A = \bigcup_{i=1}^{\infty}A_i$,
$B = \bigcup_{i=1}^{\infty}B_i$,
as in Definition~~\ref{htpy-sim-def}.
Let $B \simeq^s C$ via
$B = \bigcup_{i=1}^{\infty}B_i'$,
$C = \bigcup_{i=1}^{\infty}C_i$,
as in Definition~~\ref{htpy-sim-def}. Since $B$ is finite,
there exists $i_0$ such that
$i \ge i_0$ implies $B_i=B=B_i'$. Since
homotopy of continuous functions and homotopy type of digital images
are transitive relations, it follows
easily from Definition~~\ref{htpy-sim-def} that $A \simeq^s C$.
\end{proof}

\section{Long homotopy type}
In this section, we introduce {\em long homotopy
type}. We obtain for this notion several properties
analogous to those discussed in Section~\ref{sim-sec} for
homotopic similarity. 

The following definition is a step in the direction of the idea
that a long homotopy is a homotopy over an
infinite time interval. The following is essentially an EC version of Definition \ref{htpy-2nd-def}.

\begin{Definition}
\label{long-def}
Let $(X,\kappa)$ and $(Y,\lambda)$ be digital
images. Let $f,g: X \rightarrow Y$ be continuous.
Let $F: X \times {\N}^* \rightarrow Y$ be
a function such that
\begin{itemize}
\item for all $x\in X$, $F(x,0)=f(x)$ and there exists 
$n \in {\N}^*$ such that $t \geq n$ implies
$F(x,t)=g(x)$.
\item For all $x\in X$, the induced function $F_x: \N^* \to Y$ defined by 
\[ F_x(t) = F(x,t) \text{ for all } t\in [0,\infty]_\Z \]
is an EC-path in $Y$. 
\item For all $t\in \N^*$, the induced function $F_t:X \to Y$ defined by
\[ F_t(x) = F(x,t) \text{ for all } x\in X \]
is $(\kappa,\lambda)$--continuous.
\end{itemize}
Then $F$ is an {\rm l-homotopy} from $f$ to $g$.
If for some $x_0 \in X$ and $y_0 \in Y$ we have
$F(x_0,t)=y_0$ for all $t \in \N^*$, we say
$F$ is a {\rm pointed l-homotopy}. We write
$f \simeq_{\kappa,\lambda}^l g$, or 
$f \simeq^l g$ when the adjacencies $\kappa$ and
$\lambda$ are understood, to indicate that
$f$ and $g$ are l-homotopic functions.
$\Box$
\end{Definition}

Note that the definition above generalizes EC homotopy of paths: if two EC paths $f,g: [0,\infty]_\Z \to Y$ are EC homotopic, then the EC homotopy from $f$ to $g$ is an l-homotopy of $f$ to $g$. 

\begin{Proposition}
\label{l-finite-image}
Let $f,g: X \rightarrow Y$ be (unpointed or pointed)
continuous functions between digital images. If $f$ and
$g$ are (unpointed or pointed) homotopic in $Y$, then
$f$ and $g$ are (unpointed or pointed, respectively) l-homotopic in $Y$. The converse is true if $X$ is finite.
\end{Proposition}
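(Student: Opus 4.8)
The plan is to prove the forward implication by padding a finite homotopy with a constant tail, and the converse by exploiting finiteness of $X$ to truncate an l-homotopy to a finite time interval. Neither direction requires any deep idea; the content is in matching the clauses of Definitions~\ref{htpy-2nd-def} and~\ref{long-def}.

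For the forward direction, suppose $f \simeq_{\kappa,\lambda} g$ via an ordinary homotopy $F: X \times [0,m]_{\Z} \to Y$ as in Definition~\ref{htpy-2nd-def}. I would define $\tilde F: X \times \N^* \to Y$ by setting $\tilde F(x,t) = F(x,t)$ for $0 \le t \le m$ and $\tilde F(x,t) = g(x)$ for $t \ge m$; these agree at $t=m$ since $F(x,m) = g(x)$, so $\tilde F$ is well defined. I would then check the three conditions of Definition~\ref{long-def}: the first holds with stopping time $n = m$, uniform in $x$; each induced map $\tilde F_x$ is the EC-path obtained from the path $F_x$ by appending the constant value $g(x)$, so it is eventually constant and remains $(2,\lambda)$-continuous at the junction precisely because $F_x(m)=g(x)$; and each $\tilde F_t$ equals $F_t$ for $t \le m$ and equals $g$ for $t \ge m$, hence is continuous. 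Pointedness transfers immediately, since if $F$ holds $x_0$ at $y_0$ for all $t \in [0,m]_{\Z}$ then $\tilde F(x_0,t)=y_0$ for all $t\in\N^*$ (using $g(x_0)=F(x_0,m)=y_0$). This argument uses nothing about the cardinality of $X$.

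For the converse, assume $X$ is finite and $f \simeq^l g$ via $F: X\times\N^* \to Y$. The key observation is that the per-point stopping times can be made uniform: for each $x\in X$ choose $n_x\in\N^*$ with $F(x,t)=g(x)$ for $t\ge n_x$, and set $M = \max\{1,\max_{x\in X} n_x\}$, a well-defined positive integer precisely because $X$ is finite. I would then restrict, defining $\hat F = F|_{X\times[0,M]_{\Z}}$. The boundary conditions $\hat F(x,0)=f(x)$ and $\hat F(x,M)=g(x)$ hold (the latter since $M\ge n_x$); each $\hat F_x$ is the restriction of an EC-path to $[0,M]_{\Z}$ and is therefore a path; and each $\hat F_t$ is already continuous. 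Thus $\hat F$ is an ordinary $(\kappa,\lambda)$-homotopy, pointed whenever $F$ is.

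The genuinely load-bearing step is the passage from pointwise to uniform stopping times in the converse: finiteness of $X$ is exactly what guarantees the maximum $M$ is finite, so the l-homotopy can be cut off at time $M$. Without finiteness the supremum of the $n_x$ could fail to be finite, and there would be no interval $[0,m]_{\Z}$ on which every point has already reached its terminal value $g(x)$ — this is the whole reason l-homotopy is strictly more general. Everything else is routine verification, and the only minor bookkeeping is ensuring $M$ is chosen to be at least $1$ so that it is a legal homotopy length.
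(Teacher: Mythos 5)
Your proposal is correct and follows essentially the same route as the paper: the forward direction extends the finite homotopy by its terminal value (the paper writes this as $H(x,t)=h(x,\min\{m,t\})$, which is your $\tilde F$), and the converse truncates the l-homotopy at the maximum of the finitely many per-point stopping times. Your extra care in taking $M\ge 1$ so that the truncated map has a legal homotopy length is a minor bookkeeping point the paper elides, but the argument is the same.
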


{\em Proof}: We give a proof for the unpointed assertions.
The pointed assertions are proven similarly.

If $f \simeq g$, there is a homotopy
$h: X \times [0,m]_{\Z} \rightarrow Y$ such that
$h(x,0)=f(x)$ and $h(x,m)=g(x)$. Then the function
$H: X \times [0,\infty]_{\Z} \rightarrow Y$ defined by $H(x,t)= h(x,\min\{m,t\})$ is an l-homotopy from $f$ to $g$.

Suppose $X$ is finite and $f \simeq^l g$. Then there is
an l-homotopy $H: X \times [0,\infty]_{\Z} \rightarrow Y$
such that $H(x,0)=f(x)$ and, for all $x \in X$, there exists
\[ t_x = \min\{t \in \N^* \, | \, s \geq t 
         \mbox{ implies } H(x,s)=H(x,t) \}.
\]
Let $m=\max\{t_x \, | \, x \in X\}$. Then the function
$h: X \times [0,m]_{\Z} \rightarrow Y$ defined by
$h(x,t)=H(x,t)$ is a homotopy from $f$ to $g$.
$\Box$

At the current writing, we have
not found answers to the following.
\begin{Question}
\label{symm}
(Unpointed and pointed versions:) 
      Is l-homotopy a symmetric relation among
      continuous functions between digital images?
\end{Question}

\begin{Question}
\label{trans}
(Unpointed and pointed versions:) 
      Is l-homotopy a transitive relation among
      continuous functions between digital images?
\end{Question}

Positive answers to 
Questions~\ref{symm} and~\ref{trans} are necessary for
l-homotopy to be an equivalence relation. In the absence of such
results, we proceed with the following
definition.

\begin{Definition}
\label{revised-L}
Let $(X,\kappa)$ and $(Y,\lambda)$ be digital
images. Let $f,g: X \rightarrow Y$ be continuous.
Let $F: X \times \Z \to Y$ be
a function such that
\begin{itemize}
\item for all $x\in X$, there exists 
$N_{F,x} \in \N$ such that $t \leq -N_{F,x}$ implies
$F(x,t)=f(x)$ and $t \geq N_{F,x}$ implies
$F(x,t)=g(x)$.
\item For all $x\in X$, the induced function $F_x:\Z \to Y$ defined by 
\[ F_x(t) = F(x,t) \text{ for all } t\in \Z \]
is $(c_1,\lambda)$-continuous. 
\item For all $t\in \Z$, the induced function $F_t:X \to Y$ defined by
\[ F_t(x) = F(x,t) \text{ for all } x\in X \]
is $(\kappa,\lambda)$--continuous.
\end{itemize}
Then $F$ is a {\rm long homotopy} from $f$ to $g$.
If for some $x_0 \in X$ and $y_0 \in Y$ we have
$F(x_0,t)=y_0$ for all $t \in \N^*$, we say
$F$ is a {\rm pointed long homotopy}. We write
$f \simeq_{\kappa,\lambda}^L g$, or 
$f \simeq^L g$ when the adjacencies $\kappa$ and
$\lambda$ are understood, to indicate that
$f$ and $g$ are long homotopic functions.
$\Box$
\end{Definition}

It is easy to show that the existence of an l-homotopy implies a long homotopy:

\begin{Proposition}
\label{l-implies-long}
Let $f,g: (X,\kappa) \to (Y,\lambda)$
be continuous functions between
digital images. If $f \simeq^l g$
then $f \simeq^L g$. If the l-homotopy
between $f$ and $g$ is pointed, then
the long homotopy between $f$ and $g$ is pointed.
\end{Proposition}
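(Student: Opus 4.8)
The plan is to extend the given l-homotopy to all negative times by making it constant in $t$ there, equal to the starting function $f$. Concretely, suppose $F: X \times \N^* \to Y$ is an l-homotopy from $f$ to $g$, and define $G: X \times \Z \to Y$ by $G(x,t) = F(x,t)$ for $t \geq 0$ and $G(x,t) = f(x)$ for $t < 0$. I would then verify the three bullet conditions of Definition~\ref{revised-L} for $G$, and finally handle the pointed refinement.

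For the eventual-constancy condition, given $x$ let $n_x$ be the value of $n$ guaranteed by the first bullet of Definition~\ref{long-def}, so $t \geq n_x$ implies $F(x,t) = g(x)$. Setting $N_{G,x} = \max\{1, n_x\}$, I would check that $t \geq N_{G,x}$ forces $G(x,t) = F(x,t) = g(x)$, while $t \leq -N_{G,x}$ (hence $t \leq -1 < 0$) forces $G(x,t) = f(x)$, as required. For the path-continuity condition, I would first note that $c_1$-adjacency on $\Z$ coincides with $2$-adjacency, so $(c_1,\lambda)$-continuity of $G_x$ is precisely the requirement that consecutive integers map to equal or $\lambda$-adjacent points. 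For $t \geq 0$ this is inherited from the EC-path $F_x$, which is $(2,\lambda)$-continuous; for $t < 0$ the induced map is constant; and at the junction one has $G_x(-1) = f(x) = F(x,0) = G_x(0)$, so no adjacency is broken. For the slice-continuity condition, each $G_t$ equals either $F_t$ (continuous by hypothesis, when $t \geq 0$) or $f$ itself (continuous by hypothesis, when $t < 0$), so $G_t$ is $(\kappa,\lambda)$-continuous in every case.

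For the pointed assertion, if $F(x_0,t) = y_0$ for all $t \in \N^*$, then the definition of $G$ gives $G(x_0,t) = y_0$ for $t \geq 0$ and $G(x_0,t) = f(x_0) = F(x_0,0) = y_0$ for $t < 0$, so $G(x_0,\cdot)$ is constantly $y_0$ and in particular $G$ is a pointed long homotopy. There is no genuine obstacle in this argument; it is essentially a routine gluing. The only points that require a moment of care are the compatibility at the seam $t = 0$ (checking that the negative-time constant value $f(x)$ agrees with $F(x,0)$, which holds by the first bullet of Definition~\ref{long-def}) and the identification of $c_1$ with $2$-adjacency on $\Z$, which is what lets the EC-path hypothesis discharge the second bullet of Definition~\ref{revised-L}.
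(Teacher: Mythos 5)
Your construction is exactly the paper's: extend the l-homotopy to negative times by the constant value $F(x,0)=f(x)$, and check the three conditions of the long-homotopy definition. The proposal is correct and matches the paper's proof, just with the routine verifications written out in more detail.
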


\begin{proof}
Clearly, if $F: X \times \N^* \to Y$
is a (pointed) l-homotopy from
$f$ to $g$, then the function
$F': X \times \Z \to Y$ defined by
\[ F'(x,t) = \left \{ \begin{array}{ll}
   F(x,t) & \mbox{if } t \ge 0; \\
   F(x,0) & \mbox{if } t < 0,
   \end{array} \right .
\]
is a (pointed) long homotopy from
$f$ to $g$.
\end{proof}

The same argument used in Proposition~\ref{l-finite-image} proves the corresponding assertion for long homotopy:
\begin{Proposition}
\label{long-finite-image}
Let $f,g: X \rightarrow Y$ be (unpointed or pointed)
continuous functions between digital images. If $f$ and
$g$ are (unpointed or pointed) homotopic in $Y$, then
$f$ and $g$ are (unpointed or pointed, respectively) long homotopic in $Y$. The converse is true if $X$ is finite.
\end{Proposition}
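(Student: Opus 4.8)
The plan is to mimic the proof of Proposition~\ref{l-finite-image}, replacing the half-line $\N^*$ by the full line $\Z$ and clamping the time coordinate on both ends rather than only at the top.

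For the forward implication, suppose $h: X \times [0,m]_\Z \to Y$ is an (unpointed or pointed) homotopy from $f$ to $g$ as in Definition~\ref{htpy-2nd-def}. I would define $F: X \times \Z \to Y$ by
\[ F(x,t) = h\bigl(x, \min\{m, \max\{0,t\}\}\bigr), \]
so that $F(x,t) = f(x)$ for $t \le 0$ and $F(x,t) = g(x)$ for $t \ge m$; taking $N_{F,x} = m$ then gives the stabilization required by Definition~\ref{revised-L}. The three continuity conditions are routine. For fixed $t$, the slice $F_t = h_{\min\{m,\max\{0,t\}\}}$ is $(\kappa,\lambda)$-continuous because each time-slice of $h$ is. For fixed $x$, the clamping map $t \mapsto \min\{m,\max\{0,t\}\}$ carries consecutive integers to equal or consecutive points of $[0,m]_\Z$, so by Proposition~\ref{cont-connect} the path $F_x$ is $(c_1,\lambda)$-continuous, using that on $\Z$ the $c_1$ adjacency is exactly $2$-adjacency. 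If $h$ holds $x_0$ fixed, then $F(x_0,\cdot)$ is constantly $y_0$, giving a pointed long homotopy.

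For the converse, assume $X$ is finite and let $F: X \times \Z \to Y$ be a long homotopy from $f$ to $g$. This is where finiteness enters: each $x \in X$ carries its own stabilization bound $N_{F,x}$, and I would choose a positive integer $N \ge \max\{N_{F,x} \mid x \in X\}$, which exists precisely because $X$ is finite, so that $F(x,t) = f(x)$ for all $x$ once $t \le -N$ and $F(x,t) = g(x)$ for all $x$ once $t \ge N$. I would then translate and restrict, defining $h: X \times [0,2N]_\Z \to Y$ by $h(x,s) = F(x, s - N)$. The boundary conditions $h(x,0) = F(x,-N) = f(x)$ and $h(x,2N) = F(x,N) = g(x)$ hold by the choice of $N$, and the continuity conditions transfer directly: each $h_s = F_{s-N}$ is $(\kappa,\lambda)$-continuous, while each $h_x(s) = F_x(s-N)$ is $(2,\lambda)$-continuous because the shift $s \mapsto s-N$ embeds $[0,2N]_\Z$ into $\Z$ preserving consecutiveness and $F_x$ is $(c_1,\lambda)$-continuous. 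The pointed case follows in the same way, the basepoint being held fixed under both clamping and translation.

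The only genuine subtlety, and the step I expect to require the most care, is the passage in the converse from the pointwise bounds $N_{F,x}$ to a single uniform bound $N$; without it the stabilization times need not be bounded and the restricted homotopy would have no finite interval as its time domain, which is exactly why the converse can fail for infinite $X$. Everything else is a direct transcription of the argument for Proposition~\ref{l-finite-image}, relying on the coincidence of $c_1$- and $2$-adjacency on $\Z$ to match the path-continuity conditions of the two notions of homotopy.
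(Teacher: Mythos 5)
Your proposal is correct and follows exactly the route the paper takes: the paper's proof of this proposition consists of the single remark that ``the same argument used in Proposition~\ref{l-finite-image} proves the corresponding assertion,'' and your two-sided clamping for the forward direction and uniform stabilization bound $N = \max\{N_{F,x}\}$ (available precisely because $X$ is finite) followed by translation to $[0,2N]_\Z$ for the converse is the intended adaptation of that argument. Nothing is missing.
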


%

Unlike with l-homotopy, it is easy to see that long homotopy is symmetric.

\begin{Theorem}
\label{long-equiv}
Long homotopy and pointed long homotopy
are reflexive and symmetric relations.
\end{Theorem}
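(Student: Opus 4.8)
The plan is to verify reflexivity and symmetry directly from Definition~\ref{revised-L} by exhibiting explicit long homotopies, since the shape of that definition makes both properties constructions rather than deductions. For reflexivity, given a continuous $f:(X,\kappa)\to(Y,\lambda)$, I would take the time-constant map $F:X\times\Z\to Y$ defined by $F(x,t)=f(x)$ for all $(x,t)$. The first bullet of Definition~\ref{revised-L} holds with $N_{F,x}=0$, since $F(x,t)=f(x)$ for every $t$; each induced path $F_x$ is a constant function and hence $(c_1,\lambda)$-continuous; and each slice $F_t$ equals $f$, which is $(\kappa,\lambda)$-continuous by hypothesis. Thus $f\simeq^L f$. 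In the pointed setting $f(x_0)=y_0$, so $F(x_0,t)=y_0$ for all $t$, and the same $F$ is a pointed long homotopy.

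For symmetry, suppose $F:X\times\Z\to Y$ is a long homotopy from $f$ to $g$, and define the time-reversal $F':X\times\Z\to Y$ by $F'(x,t)=F(x,-t)$. I would check that $F'$ is a long homotopy from $g$ to $f$. The threshold condition holds with $N_{F',x}=N_{F,x}$: if $t\le -N_{F,x}$ then $-t\ge N_{F,x}$, so $F'(x,t)=F(x,-t)=g(x)$, and symmetrically $t\ge N_{F,x}$ gives $F'(x,t)=f(x)$. The slice condition is immediate, since $F'_t=F_{-t}$ is $(\kappa,\lambda)$-continuous because $F_{-t}$ is. The only step requiring a genuine observation is the continuity of the reversed paths $F'_x$.

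That observation—which is really the whole mathematical content here—is that negation $\nu:\Z\to\Z$, $\nu(t)=-t$, is a $(c_1,c_1)$-isomorphism of the digital line, since it is a bijection that carries $c_1$-adjacent integers (those differing by $1$) to $c_1$-adjacent integers. Consequently $F'_x=F_x\circ\nu$ is the composite of the $(c_1,c_1)$-continuous map $\nu$ with the $(c_1,\lambda)$-continuous path $F_x$, so $F'_x$ is $(c_1,\lambda)$-continuous by Proposition~\ref{composition}. This supplies the middle bullet, and $F'$ therefore realizes $g\simeq^L f$. For the pointed version, when the basepoint track is held fixed, $F(x_0,\cdot)$ is constantly $y_0$, so its reversal $F'(x_0,t)=F(x_0,-t)=y_0$ is constant as well, and $F'$ is pointed. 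I do not expect any real obstacle: every condition beyond the $\nu$-compatibility of $c_1$-continuity is routine threshold bookkeeping, and that one point is exactly what fails to be automatic and is resolved by recognizing time reversal as a digital isomorphism of $(\Z,c_1)$.
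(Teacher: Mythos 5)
Your proposal is correct and follows essentially the same route as the paper: the constant-in-time map $F(x,t)=f(x)$ for reflexivity and the time reversal $F'(x,t)=F(x,-t)$ for symmetry. You simply spell out the verifications (thresholds, slice continuity, and the observation that $t\mapsto -t$ is a $(c_1,c_1)$-isomorphism of $\Z$) that the paper leaves implicit.
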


\begin{proof}
We state a proof for the unpointed
assertion. The same argument
works for the pointed assertion.

For the reflexive property, we note
the following.
Given a continuous function
$f: (X,\kappa) \to (Y,\lambda)$, it
is clear that the function
$F: X \times \Z \to Y$ given by
$F(x,t)=f(x)$ is a long homotopy
from $f$ to $f$.

For the symmetric property, we
note that if $F: X \times \Z \to Y$ is
a long homotopy from $f$ to $g$, where
$f,g: (X,\kappa) \to (Y,\lambda)$ are
continuous, then
$F': X \times \Z \to Y$, defined by
$F'(x,t)=F(x,-t)$, is a long homotopy
from $g$ to $f$.
\end{proof}

At the current writing, we lack an answer to the following.

\begin{Question}
Is long homotopy (pointed or unpointed) between continuous
functions a transitive relationship?
\end{Question}

This seems to be a difficult problem. If $X$ is finite
and $f,g,h: X \to Y$ with
$f \simeq^L g \simeq^L h$, then $f  \simeq^L h$ since it 
follows from Proposition~\ref{long-finite-image} that 
in this case, long homotopy coincides with homotopy, which
is transitive. 
To demonstrate the difficulty involved in the general case,
we will prove transitivity for another special case. 

It is easy to see that if $c,d:X\to Y$ are two different constant maps whose constant values $c=c(x)$ and $d=d(x)$ are in the same component of $X$, then $c$ and $d$ are homotopic, and thus long homotopic. Thus the following theorem is a very special case of transitivity, but the proof is already somewhat involved.

Say that a digital image $X$ is \emph{locally finite} when each point $x\in X$ is adjacent to only finitely many other points of $X$.
E.g., if $X$ is finite, or if $X$
has a $c_u$-adjacency, then $X$ is locally finite.

\begin{Theorem}
\label{transitive-thru-consts}
Let $X$ be locally finite, and let $f:X\to Y$ be a continuous function, and let $c,d:X\to Y$ be two constant functions with constant values $c$ and $d$ in the same component of $Y$. If $f\simeq^L c$, then $f\simeq^L d$.
\end{Theorem}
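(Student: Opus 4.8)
The plan is to reduce to the case where $c$ and $d$ are $\lambda$-adjacent, and then to build an explicit long homotopy by splicing the given one with the constant map. Since $c$ and $d$ lie in the same component of $Y$, there is a $\lambda$-path $c = c_0, c_1, \dots, c_k = d$ whose consecutive entries are equal or $\lambda$-adjacent. I claim it suffices to prove the single-step lemma: if $X$ is locally finite, $f \simeq^L \overline{c_j}$, and $c_{j+1}$ is equal or $\lambda$-adjacent to $c_j$, then $f \simeq^L \overline{c_{j+1}}$. Granting the lemma, one applies it $k$ times, beginning with the hypothesis $f \simeq^L \overline{c_0} = c$ and arriving at $f \simeq^L \overline{c_k} = d$. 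Crucially, each application produces a \emph{concrete} long homotopy that becomes the input to the next step, so the argument is a finite composition of explicit constructions and never invokes the (open) transitivity of long homotopy. Full-path schedules, by contrast, do not work: a single global switch function would have to vary by at most $1$ between adjacent points, which cannot be arranged in general, whereas a single edge has path-phase values lying in the two-element set $\{c_j,c_{j+1}\}$, so they are automatically equal or adjacent.

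For the lemma, write $F \colon X \times \Z \to Y$ for a long homotopy from $f$ to $\overline{c_j}$, and for each $x$ let $N_{F,x}$ be the threshold of Definition~\ref{revised-L}, so that $F(x,t) = c_j$ for $t \ge N_{F,x}$ and $F(x,t)=f(x)$ for $t \le -N_{F,x}$. The obstacle is that over an infinite $X$ the thresholds $N_{F,x}$ need not be bounded, so one cannot first drive all of $X$ to $c_j$ by a uniform time and then move along the edge to $c_{j+1}$. This is exactly where local finiteness enters: for each $x$ the set of points equal or $\kappa$-adjacent to $x$ is finite, so I may define a finite switch time $s(x) = \max\{N_{F,x'} : x' = x \text{ or } x' \text{ is } \kappa\text{-adjacent to } x\}$. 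I then set $G(x,t) = F(x,t)$ for $t \le s(x)$ and $G(x,t) = c_{j+1}$ for $t > s(x)$, noting $G(x,s(x)) = c_j$ since $s(x) \ge N_{F,x}$.

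It remains to verify that $G$ is a long homotopy from $f$ to $\overline{c_{j+1}}$. The threshold condition holds with $N_{G,x} = s(x)+1$, since $s(x)\ge N_{F,x}\ge 0$ keeps sufficiently negative times in the $F$-phase. The induced-path condition is immediate from that of $F$ together with the single adjacency $c_j,c_{j+1}$ at the splice time $s(x)$. The delicate point — the step I expect to be the main obstacle — is slice continuity: for $\kappa$-adjacent $x,x'$ and fixed $t$, one must show $G(x,t)$ and $G(x',t)$ are equal or $\lambda$-adjacent. When both points are in the same phase this is clear, either from slice continuity of $F$ or because both values equal $c_{j+1}$. In the mixed case, say $t \le s(x)$ but $t > s(x')$, the definition of $s(x')$ as a maximum over $x'$ \emph{and its neighbors} gives $s(x') \ge N_{F,x}$, because $x$ is $\kappa$-adjacent to $x'$; hence $t > N_{F,x}$ forces $F(x,t) = c_j$, so $G(x,t) = c_j$ and $G(x',t) = c_{j+1}$, which are adjacent. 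This is precisely why the switch time is a maximum over the finitely many neighbors rather than $N_{F,x}$ itself: it guarantees that no point is advanced to $c_{j+1}$ while a neighbor still carries an $F$-value away from $c_j$. The statement is unpointed, so no basepoint bookkeeping is required.
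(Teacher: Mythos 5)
Your proposal is correct and follows essentially the same route as the paper: the paper also reduces to a single adjacent step by inducting along a path from $c$ to $d$, defines the switch time as a threshold that works for $x$ \emph{and all of its (finitely many) neighbors}, splices the given long homotopy with the constant value $d$ at that time, and verifies slice continuity by the same case analysis on which of two adjacent points has already switched. The one point you highlight as delicate --- that the maximum over neighbors forces $F(x,t)=c_j$ whenever an adjacent point has already moved on to $c_{j+1}$ --- is exactly the paper's key observation.
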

\begin{proof}
Let $\sigma:[0,k]_\Z \to Y$ be a path from $c$ to $d$. Our proof is by induction on $k$. If $k=0$, then $c=d$ and there is nothing to prove. Letting $c' = \sigma(k-1)$, for our induction case we may assume that $f \simeq^L c'$, and we will show that $f\simeq^L d$. (Note that $c'$ and $d$ are adjacent.)

Let $H:X\times \Z \to Y$ be a long homotopy of $f$ to $c'$. Then for each $x \in X$, there is a number $N_x$ such that, whenever $t \ge N_x$, we have $H(x,t) = c'$. Since $X$ is locally finite, there is a number $N'_x \ge N_x$ such that, whenever $t \ge N'_x$, we have $H(x',t) = c'$  for every $x'$ adjacent to $x$. 

Then we define $G:X\times \Z \to Y$ as:
\[ G(x,t)= \begin{cases} H(x,t) & \text{ if } t \le N'_x; \\
d & \text{ if } t > N'_x. \end{cases} \]

We claim that $G$ is a long homotopy of $f$ to $d$.
It is clear that for all $x \in X$
there exists $n_x \in \N$ such that $t \le -n_x$ implies $G(x,t) = H(x,t) = f(x)$ and
$t \ge n_x$ implies $G(x,t)= d$.
Furthermore, the induced function $G_x(t)$ is given by:
\[ G_x(t) = \begin{cases} H_x(t) & \text { if } t \le N'_x; \\
d &\text { if } t > N'_x. \end{cases}, \]
which is continuous since $H(x,N'_x) = c'$ is adjacent to $d$.

Lastly we show that the induced function $G_t(x)$ is continuous: take any point $y$ adjacent to $x$, and we will show that $G_t(x)$ is adjacent or equal to $G_t(y)$.
\begin{itemize}
\item
When $t \le N'_x$, we have $G_t(x) = H_t(x)$,
which is adjacent or equal to $H_t(y)$
because $H$ is a homotopy.
\begin{itemize}
\item If $H_t(y)=G_t(y)$, we have the
desired conclusion that $G_t(x)$ is adjacent or equal to $G_t(y)$.
\item Otherwise, $G_t(y)=d \neq H_t(y)$, so $t > N_y'$, which implies
$H(x,t)=c'$. Thus, $G_t(x) \in \{c',d\}$, so
$G_t(x)$ is adjacent or equal to $G_t(y)$.
\end{itemize}
\item
If $t > N'_x$ then $G_t(x) = d$. For $G_t(y)$ there are two cases. 
If $t \ge N'_y$ then $G_t(y) = d = G_t(x)$. If $t < N'_y$ we still must have $t \ge N_y$ since $t > N'_x$ and $y$ is adjacent to $x$. Thus in this case $G_t(y) = H_t(y) = c'$, and thus $G_t(x) = d$ and $G_t(y) = c'$ are adjacent as desired.
\end{itemize}
\end{proof}

\begin{Definition}
\label{long-equiv-def}
Let $f: (X, \kappa) \rightarrow (Y,\lambda)$ and
$g: (Y, \lambda) \rightarrow (X,\kappa)$ be
continuous functions. Suppose
$g \circ f \simeq^L 1_X$ and
$f \circ g \simeq^L 1_Y$. Then we say
$(X,\kappa)$ and $(Y,\lambda)$  
have the same {\rm long homotopy type}, denoted 
$X\simeq_{\kappa,\lambda}^L Y$ or simply $X \simeq^L Y$.
If there exist $x_0 \in X$ and $y_0 \in Y$ such
that $f(x_0)=y_0$, $g(y_0)=x_0$,
the long homotopy
$g \circ f \simeq^L 1_X$
holds $x_0$ fixed, and
the long homotopy
$f \circ g \simeq^L 1_Y$
holds $y_0$ fixed, then 
$(X,x_0,\kappa)$ and $(Y,y_0,\lambda)$ have the 
same {\rm pointed long homotopy type}, denoted
$(X,x_0) \simeq_{\kappa,\lambda}^L (Y,y_0)$ or
$(X,x_0) \simeq^L (Y,y_0)$.
$\Box$
\end{Definition}

\begin{Proposition}
\label{hequiv-implies-long}
If $X \simeq_{\kappa,\lambda}Y$, then
$X \simeq_{\kappa,\lambda}^L Y$.
If $(X,x_0) \simeq_{\kappa,\lambda} (Y,y_0)$, then $(X,x_0) \simeq_{\kappa,\lambda}^L (Y,y_0)$. The converses of both statements hold when $X$ and $Y$ are both finite.
\end{Proposition}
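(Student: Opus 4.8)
The plan is to deduce everything directly from Proposition~\ref{long-finite-image}, which already relates ordinary homotopy and long homotopy for a single pair of maps; the only work is to apply it to each of the two composition homotopies appearing in Definitions~\ref{htpy-type} and~\ref{long-equiv-def}. Since both notions of equivalence are witnessed by a pair $(f,g)$ with $f:X\to Y$ and $g:Y\to X$, I would reuse the same pair throughout and only upgrade or downgrade the two homotopies relating the composites to the identities.

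For the forward (unpointed) direction, suppose $X \simeq_{\kappa,\lambda} Y$, witnessed by continuous $f$ and $g$ with $g \circ f \simeq_{\kappa,\kappa} 1_X$ and $f \circ g \simeq_{\lambda,\lambda} 1_Y$. I would apply the first assertion of Proposition~\ref{long-finite-image} to each of these two homotopies separately, obtaining $g \circ f \simeq^L 1_X$ and $f \circ g \simeq^L 1_Y$, so that Definition~\ref{long-equiv-def} gives $X \simeq^L_{\kappa,\lambda} Y$ via the same $(f,g)$. The pointed forward direction is identical: starting from a pointed homotopy equivalence with $f(x_0)=y_0$ and $g(y_0)=x_0$, the pointed half of Proposition~\ref{long-finite-image} upgrades the two pointed homotopies to pointed long homotopies, while the basepoint conditions $f(x_0)=y_0$ and $g(y_0)=x_0$ are unaffected, yielding $(X,x_0) \simeq^L (Y,y_0)$. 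Note that no finiteness is needed here.

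For the converse under the finiteness hypothesis, I would run the same argument through the converse half of Proposition~\ref{long-finite-image}, which asserts that long homotopy implies homotopy when the domain is finite. Given $X \simeq^L_{\kappa,\lambda} Y$ via $(f,g)$, the long homotopy $g \circ f \simeq^L 1_X$ has domain $X$ and the long homotopy $f \circ g \simeq^L 1_Y$ has domain $Y$; applying the converse to each gives $g \circ f \simeq 1_X$ and $f \circ g \simeq 1_Y$, hence $X \simeq_{\kappa,\lambda} Y$. The pointed converse is the same using the pointed long homotopies and the pointed part of Proposition~\ref{long-finite-image}.

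There is no genuine obstacle; this proposition is essentially a bookkeeping corollary of Proposition~\ref{long-finite-image}. The only point that requires care — and the reason the hypothesis asks for both $X$ and $Y$ finite rather than just one — is that the two long homotopies have \emph{different} domains ($X$ for $g \circ f \simeq^L 1_X$ and $Y$ for $f \circ g \simeq^L 1_Y$), and the converse of Proposition~\ref{long-finite-image} needs the relevant domain to be finite in each case. Thus finiteness of both images is genuinely used, once per composition.
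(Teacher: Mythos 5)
Your proof is correct and takes essentially the same route as the paper, which simply cites Definition~\ref{long-equiv-def} and Proposition~\ref{long-finite-image}; you have just spelled out the bookkeeping of applying that proposition to each of the two composite homotopies, including the correct observation that finiteness of both images is needed because the two long homotopies have different domains.
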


{\em Proof}: The assertions follow from Definition~\ref{long-equiv-def} and Proposition~\ref{long-finite-image}.
$\Box$

%
%
%
%
%
%

\begin{Theorem}
\label{long-reflex-symm}
Long homotopy type, and pointed long
homotopy type, are reflexive and
symmetric relations among digital
images.
\end{Theorem}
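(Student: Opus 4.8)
The plan is to reduce both properties to the definition of long homotopy type (Definition~\ref{long-equiv-def}) together with the function-level reflexivity already recorded in Theorem~\ref{long-equiv}. Recall that $X \simeq^L Y$ asserts the existence of continuous $f : X \to Y$ and $g : Y \to X$ with $g \circ f \simeq^L 1_X$ and $f \circ g \simeq^L 1_Y$, so reflexivity and symmetry at the level of digital images should follow by an appropriate choice, respectively reuse, of the witnessing pair $(f,g)$.

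For reflexivity, I would take $f = g = 1_X$, so that $g \circ f = f \circ g = 1_X$; the reflexive part of Theorem~\ref{long-equiv} then supplies $1_X \simeq^L 1_X$, giving $X \simeq^L X$. In the pointed setting, for a chosen basepoint $x_0 \in X$ the identity satisfies $1_X(x_0) = x_0$, and the constant long homotopy $F(x,t) = x$ evidently holds $x_0$ fixed, so $(X,x_0) \simeq^L (X,x_0)$.

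For symmetry, suppose $X \simeq^L Y$ is witnessed by $f : X \to Y$ and $g : Y \to X$ as above. I would obtain $Y \simeq^L X$ simply by interchanging the two maps, using $g : Y \to X$ and $f : X \to Y$ as the new witnessing pair: the two required conditions for $Y \simeq^L X$ are $f \circ g \simeq^L 1_Y$ and $g \circ f \simeq^L 1_X$, which are exactly the hypotheses. The same interchange settles the pointed case, since if $f(x_0) = y_0$, $g(y_0) = x_0$, and the relevant long homotopies hold $x_0$ and $y_0$ fixed, then the swapped pair yields $(Y,y_0) \simeq^L (X,x_0)$ with the basepoint data intact.

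I do not anticipate any real obstacle: symmetry is immediate from the symmetric form of Definition~\ref{long-equiv-def} and in fact does not even require the function-level symmetry of Theorem~\ref{long-equiv}, while reflexivity needs only the easy fact $1_X \simeq^L 1_X$. The one place deserving a little attention is the pointed bookkeeping---confirming that the identity long homotopy used for reflexivity holds the basepoint fixed, and that swapping $f$ and $g$ transports the two ``holds fixed'' conditions to the correct identities. Consistent with the open transitivity questions raised earlier, transitivity is deliberately omitted from the statement.
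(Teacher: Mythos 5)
Your proposal is correct and matches the paper's approach: the paper's proof is simply ``the assertions follow easily from Definition~\ref{long-equiv-def},'' and your argument (reflexivity via $f=g=1_X$ with the constant long homotopy, symmetry by swapping the witnessing pair $(f,g)$) is exactly the intended elaboration, including the pointed bookkeeping.
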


\begin{proof}
The assertions follow easily from
Definition~\ref{long-equiv-def}.
\end{proof}

At the current writing, we lack an answer to the following.

\begin{Question}
Is long homotopy type (unpointed or pointed)
a transitive relation among digital images?
\end{Question}

This appears to be a difficult
problem. A positive resolution to
this question is necessary in order for us to conclude that long homotopy type is an equivalence relation. Since homotopy equivalence is an equivalence relation,
Proposition~\ref{hequiv-implies-long} implies
(for both the pointed and unpointed
questions) that if there exists
an example of non-transitivity for
long homotopy type, i.e., images
$A,B,C$ such that
$A \simeq^L B \simeq^L C$ with
$A$ and $C$ not long homotopically equivalent, then at least one of
$A,B,C$ must be infinite.

In the next result,
we prove transitivity for a special case. The following resembles Theorem \ref{sim-finite} for homotopic similarity, but requires that the intermediate image be a single point. It does not seem easy to generalize to finite sets as in Theorem \ref{sim-finite}.

\begin{Theorem}
\label{long-nullhtpc}
Let $X \simeq^L \{a\} \simeq^L Y$.
Then $X \simeq^L Y$.
If $(X,x_0) \simeq^L (\{a\},a) \simeq^L (Y,y_0)$,
then $(X,x_0) \simeq^L (Y,y_0)$.
\end{Theorem}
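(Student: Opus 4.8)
The plan is to exploit the fact that any map into or out of a one-point image is forced, so that the hypotheses $X \simeq^L \{a\}$ and $\{a\} \simeq^L Y$ are really just statements that suitable constant self-maps are long homotopic to the respective identities. This lets me assemble the equivalence $X \simeq^L Y$ directly out of constant maps, sidestepping the (open) question of transitivity of long homotopy.

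First I would unpack the hypotheses using Definition~\ref{long-equiv-def}. Since the only function $X \to \{a\}$ is constant, and a function $\{a\} \to X$ is merely the choice of a single point, the equivalence $X \simeq^L \{a\}$ is witnessed by the constant map $f_1 \equiv a$ together with $g_1(a) = x_*$ for some $x_* \in X$; the condition $g_1 \circ f_1 \simeq^L 1_X$ then reads $\overline{x_*} \simeq^L 1_X$, while the other condition $f_1 \circ g_1 \simeq^L 1_{\{a\}}$ is automatic since $f_1 \circ g_1 = 1_{\{a\}}$ and long homotopy is reflexive (Theorem~\ref{long-reflex-symm}). Symmetrically, the equivalence $\{a\} \simeq^L Y$, witnessed by $f_2: \{a\} \to Y$ and $g_2: Y \to \{a\}$, yields a point $y_* = f_2(a) \in Y$ with $\overline{y_*} = f_2 \circ g_2 \simeq^L 1_Y$.

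Next I would define $F := \overline{y_*}: X \to Y$ and $G := \overline{x_*}: Y \to X$, both constant and hence continuous. The composite $G \circ F: X \to X$ sends every point through $y_*$ to $x_*$, so $G \circ F = \overline{x_*} \simeq^L 1_X$ by the first hypothesis; likewise $F \circ G = \overline{y_*} \simeq^L 1_Y$ by the second. By Definition~\ref{long-equiv-def} this exhibits $X \simeq^L Y$. For the pointed statement the pointed hypotheses force $x_* = x_0$ and $y_* = y_0$, and the supplied long homotopies already hold $x_0$ and $y_0$ fixed; taking $F = \overline{y_0}$ and $G = \overline{x_0}$ gives $F(x_0) = y_0$ and $G(y_0) = x_0$, and the same computation produces pointed long homotopies, so $(X,x_0) \simeq^L (Y,y_0)$.

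There is essentially no computational obstacle here; the one point to notice — and the reason the single-point hypothesis cannot be relaxed — is that the argument never invokes transitivity of long homotopy. Because the intermediate image is a point, both equivalence maps are forced to be constant, their composites are again the constant maps $\overline{x_*}$ and $\overline{y_*}$, and I simply reuse the two long homotopies handed to me by the hypotheses. If $\{a\}$ were replaced by a larger finite image the composites would no longer be constant and one would genuinely have to concatenate long homotopies, which is precisely the transitivity problem left open above.
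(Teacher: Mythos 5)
Your proposal is correct and follows essentially the same route as the paper: both arguments observe that the maps through $\{a\}$ are forced to be constant, take the constant maps $\overline{y_*}:X\to Y$ and $\overline{x_*}:Y\to X$ as the equivalence data, and simply reuse the two long homotopies supplied by the hypotheses, with no appeal to transitivity. The only difference is cosmetic: the paper writes out the pointed case and remarks that the unpointed case is similar, whereas you spell out the unpointed case and note that pointedness forces $x_*=x_0$, $y_*=y_0$.
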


\begin{proof} 
We state a proof for the pointed
assertion; the unpointed assertion is handled
similarly.

By hypothesis, there are pointed functions
$f: (X,x_0) \to (\{a\},a)$ and
$g: (\{a\},a) \to (X,x_0)$ and a pointed long
homotopy $H: X \times \Z \to X$ from
$1_X$ to $g \circ f=\overline{x_0}$. Similarly, there are pointed functions
$h: (Y,y_0) \to (\{a\},a)$ and
$k: (\{a\},a) \to (Y,y_0)$ and a pointed long
homotopy $K: Y \times \Z \to Y$ from
$1_Y$ to $k \circ h=\overline{y_0}$.

Let $\overline{y_0}'$ be the constant
function from $X$ to $Y$.
Let $\overline{x_0}'$ be the constant
function from $Y$ to $X$. Then
$H$ is a pointed long homotopy from $1_X$ to 
$\overline{x_0}=\overline{x_0}' \circ \overline{y_0}'$,
and $K$ is a pointed long homotopy from $1_Y$ to 
$\overline{y_0}=\overline{y_0}' \circ \overline{x_0}'$.
The assertion follows.
\end{proof}

A digital image with bounded diameter and an image with
infinite diameter
cannot have the same homotopy type, but
Example~\ref{contractible} shows that such a pair of images can have the same long homotopy 
type. Example~\ref{2-infinite} 
gives two digital images with
unbounded diameters that have the same long homotopy 
type but not the same homotopy type.

\section{Real homotopy type}
\label{real-homotopy}
In this section we present another generalization of digital homotopy that we call {\em real homotopy}.
As in the case of long homotopy, we will allow the time interval to be infinite, this time using the real interval $[0,1]$. Though nondiscrete sets are not typically used in digital topology, we will see as in the other sections that real homotopy and digital homotopy are equivalent when the images under consideration are finite. The advantage in using the real interval rather than the integer interval $[0,\infty)$ is that two copies of $[0,1]$ can be concatenated in a natural way, which allows us to prove that real homotopy is transitive.

It also turns out that long homotopy can tell us a lot about
real homotopy.

We begin with a preliminary definition that is a kind of continuity property for a function from a real interval into a digital image. Informally we want to require that such a function be locally constant with jump discontinuities only between adjacent points.
\begin{Definition}
\label{real-path}
Let $(X,\kappa)$ be a digital image, and $[0,1] \subset \R$ be the unit interval. A function $f:[0,1] \to X$ is a \emph{real [digital] [$\kappa$-]path in $X$} if:
\begin{itemize}
\item there exists $\epsilon_0>0$ such that $f$ is constant on $(0,\epsilon_0)$ with constant value equal or  $\kappa$-adjacent to $f(0)$,
\item there exists $\epsilon_1>0$ such that $f$ is constant on $(1-\epsilon_1, 1)$ with constant value equal or  $\kappa$-adjacent to $f(1)$,
\item for each $t\in (0,1)$ there exists $\epsilon_t >0$ such that $f$ is constant on each of the intervals $(t-\epsilon_t,t)$ and $(t,t+\epsilon_t)$, and these two constant values are equal or $\kappa$-adjacent, with at least one of them equal to $f(t)$.
\end{itemize}
If $t=0$ and $f(0) \neq f((0,\epsilon_0))$, or $0<t<1$ and
the two constant 
values $f((t-\epsilon_t,t))$ and $f((t,t+\epsilon_t))$
are not equal, or $t=1$ and $f(1) \neq f((1-\epsilon_1,1))$, we say $t$ is a \emph{jump} of $f$.
\end{Definition}



In fact such a real path can only have finitely many jumps, as the following Proposition shows.

\begin{Proposition}
\label{finite-jumps}
Let $p, q \in (X, \kappa)$. 
Let $f : [a, b] \rightarrow X$ be a real $\kappa$-path
from $p$ to $q$. Then the number of jumps of $f$ is 
finite.
\end{Proposition}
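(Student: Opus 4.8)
The plan is to exploit the compactness of $[a,b]$ together with the observation that the local structure forced by Definition~\ref{real-path} allows at most one jump near each point. First I would assign to every point of $[a,b]$ a neighborhood on which $f$ is locally constant away from the center: around an interior point $t \in (a,b)$ take $U_t = (t-\epsilon_t, t+\epsilon_t)$ (shrinking $\epsilon_t$ if necessary so that $U_t \subset (a,b)$), around the left endpoint take $U_a = [a, a+\epsilon_0)$, and around the right endpoint take $U_b = (b-\epsilon_1, b]$, where the $\epsilon$'s are the constants supplied by the definition. Each $U_a$ and $U_b$ is open in the subspace $[a,b]$ and each $U_t$ is open in $\R$, so these sets form an open cover of the compact set $[a,b]$; hence finitely many of them, say $U_{t_1},\dots,U_{t_n}$, already cover $[a,b]$.

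The key step is to show that each of these basic neighborhoods contains at most one jump of $f$, namely its center. I would verify this for an interior neighborhood $U_{t_i}=(t_i-\epsilon_{t_i},t_i+\epsilon_{t_i})$ by taking a noncentral point $s$, say $s\in(t_i-\epsilon_{t_i},t_i)$, on which $f$ is constant with value $v$. Applying the path condition at $s$ produces one-sided constant values of $f$ just to the left and just to the right of $s$; since the intervals witnessing these values meet $(t_i-\epsilon_{t_i},t_i)$ in nonempty open sets where $f\equiv v$, matching the constants forces both one-sided values at $s$ to equal $v$, so $s$ is not a jump. The symmetric computation handles $s$ in the right half, and the endpoint neighborhoods $U_a,U_b$ are treated identically using the one-sided conditions at $a$ and $b$. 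Consequently every point of $U_{t_i}$ other than $t_i$ is a point of local constancy, so $U_{t_i}$ has at most the single jump $t_i$.

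Combining the two steps, every jump of $f$ lies in some $U_{t_i}$ and must be its center $t_i$, so the number of jumps is at most $n$, which is finite. I do not anticipate a genuine obstacle here; the only place requiring care is the ``at most one jump per neighborhood'' step, where one must check that the locally constant values of $f$ on overlapping witnessing intervals agree. This is a short matching-of-constants argument, but it is the only point at which the precise form of Definition~\ref{real-path} is actually used, so I would write it out explicitly while treating the compactness reduction as routine.
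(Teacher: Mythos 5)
Your proof is correct, and it reaches the conclusion by a genuinely different route from the paper's. The paper argues by contradiction using sequential compactness: assuming infinitely many jumps, the Bolzano--Weierstrass theorem yields an accumulation point $t_0$ of the set of jumps, and then for every $\varepsilon>0$ at least one of $(t_0-\varepsilon,t_0)$ and $(t_0,t_0+\varepsilon)$ contains infinitely many jumps, contradicting the local constancy that Definition~\ref{real-path} guarantees on both of those intervals for some $\varepsilon$. You instead use the open-cover form of compactness directly: the witnessing neighborhoods form an open cover of $[a,b]$, a finite subcover $U_{t_1},\dots,U_{t_n}$ exists, and each $U_{t_i}$ contains at most one jump (its center), so the number of jumps is at most $n$. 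Both arguments ultimately rest on the same local fact --- a point lying in the interior of an interval on which $f$ is constant cannot be a jump --- which you correctly single out as the one step where the precise form of the definition is used and which the paper leaves implicit in the phrase ``contrary to the requirement.'' What your version buys is a direct argument with an explicit bound on the number of jumps in terms of the size of the subcover; what the paper's version buys is brevity, since it never needs to isolate and verify the ``at most one jump per neighborhood'' lemma.
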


\begin{proof}
Suppose $f$ has an infinite set of jumps in the domain
$[a, b]$. By the Bolzano-Weierstrass
Theorem this set of jumps has an accumulation point. Thus there exists $t_0 \in [a, b]$ and a sequence
of distinct jumps $\{t_j\}_{j=1}^{\infty}$ such that
$\lim_{j \rightarrow \infty} t_j = t_0$. Then
for every $\varepsilon > 0$, at least one of the
intervals $(t_0 - \varepsilon, t_0)$ and
$(t_0, t_0 + \varepsilon)$ has infinitely many
members of $\{t_j\}_{j=1}^{\infty}$,
contrary to the requirement of 
Definition~\ref{real-path} that there be
$\varepsilon > 0$ such that $f$ is constant on each
of the intervals $(t_0 - \varepsilon, t_0)$ and
$(t_0, t_0 + \varepsilon)$.
\end{proof}

Now we can define real digital homotopy of functions. The following is a ``real'' version of Definitions \ref{htpy-2nd-def} and \ref{long-def}.
\begin{Definition}\label{real-def}
Let $(X,\kappa)$ and $(Y,\kappa')$ be digital images, and let $f,g: X \to Y$ be $(\kappa,\kappa')$ continuous. Then a \emph{real [digital] homotopy} of $f$ and $g$ is a function $F:X \times [0,1] \to Y$ such that:
\begin{itemize}
\item for all $x\in X$, $F(x,0) = f(x)$ and $F(x,1) = g(x)$
\item for all $x \in X$, the induced function $F_x:[0,1]\to Y$ defined by
\[ F_x(t) = F(x,t)\, \text{for all $t \in [0,1]$} \]
is a real $\kappa$-path in $X$.
\item for all $t\in [0,1]$, the induced function $F_t:X\to Y$ defined by 
\[ F_t(x) = F(x,t) \, \text{for all $x\in X$} \]
is $(\kappa,\kappa')$--continuous.
\end{itemize}
If such a function exists we say $f$ and $g$ are
{\em real homotopic} and write $f\simeq^\R g$.
If there are points $x_0 \in X$ and $y_0 \in Y$ such
that $F(x_0,t)=y_0$
for all $t \in [0,1]$, we say $f$ and $g$ are
{\em pointed real homotopic}.
\end{Definition}

Unlike long homotopy, real homotopy is easily shown to be an equivalence relation.

\begin{Theorem}
\label{real-equiv}
Real homotopy and pointed real homotopy are 
equivalence relations among continuous functions 
between digital images.
\end{Theorem}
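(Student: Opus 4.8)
The plan is to verify the three axioms of an equivalence relation separately, treating the unpointed case throughout and checking at each stage that the construction preserves any held basepoint, which will give the pointed versions simultaneously.

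For reflexivity, given a continuous $f: X \to Y$ I would take the stationary homotopy $F(x,t) = f(x)$. Each induced path $F_x$ is constant, hence trivially a real $\kappa$-path; each slice $F_t = f$ is continuous; and the endpoint conditions $F(x,0) = F(x,1) = f(x)$ are immediate. If $f$ fixes a basepoint there is nothing further to check. For symmetry, given a real homotopy $F$ from $f$ to $g$ I would reverse time, setting $F'(x,t) = F(x,1-t)$. The slices $F'_t = F_{1-t}$ remain continuous, and the only real content is that the time-reversal of a real $\kappa$-path is again a real $\kappa$-path: under $t \mapsto 1-t$ the endpoint clause of Definition~\ref{real-path} at $0$ becomes the clause at $1$ and vice versa, while the interior clause is symmetric in its left and right one-sided constant values, so all three conditions transfer. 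A fixed basepoint is preserved since $F'(x_0,t) = F(x_0,1-t) = y_0$.

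The substance is transitivity. Given real homotopies $F$ from $f$ to $g$ and $G$ from $g$ to $h$, the naive two-piece concatenation can fail: at the join, the one-sided value of $F_x$ approaching $t=1$ need only be \emph{adjacent} to (not equal to) $g(x)$, and likewise for $G_x$ approaching $t=0$, so the interior-point clause of Definition~\ref{real-path} --- which demands that the left and right constant values be equal or adjacent with at least one equal to the value at the point --- can be violated at the seam. To repair this I would insert a flat middle region, defining $H(x,t) = F(x,3t)$ for $0 \le t \le 1/3$, $H(x,t) = g(x)$ for $1/3 \le t \le 2/3$, and $H(x,t) = G(x,3t-2)$ for $2/3 \le t \le 1$. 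This is well defined because $F(x,1) = g(x) = G(x,0)$, and on the middle third we have $H_t = g$, which is continuous.

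The main work, and the step I expect to be the only genuine obstacle, is checking that each $H_x$ is a real $\kappa$-path at the two seams $t = 1/3$ and $t = 2/3$. At $t = 1/3$ the right-hand constant value of $H_x$ is exactly $g(x) = H_x(1/3)$, so the ``at least one equal to the point value'' requirement holds automatically, while the left-hand value is equal or adjacent to $g(x)$ by the endpoint clause of Definition~\ref{real-path} applied to $F_x$ at $1$; the seam $t = 2/3$ is symmetric, using $G_x$ at $0$. Away from the seams the real-path conditions transfer from $F_x$ and $G_x$ under the affine rescalings $t \mapsto 3t$ and $t \mapsto 3t-2$, which preserve local constancy and the jump structure, and the slice continuity of each $H_t$ follows from that of the $F_s$, the $G_s$, and $g$. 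Finally, if the given homotopies are pointed then $H(x_0,t) = y_0$ throughout --- including on the middle third, where $g(x_0) = F(x_0,1) = y_0$ --- so the concatenation is pointed as well, completing all three axioms in both the unpointed and pointed settings.
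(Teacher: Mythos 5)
Your reflexivity and symmetry arguments coincide with the paper's (stationary homotopy; time reversal $F(x,1-t)$), but your transitivity argument genuinely diverges, and the divergence is to your credit. The paper concatenates in two halves, $H(x,t)=F(x,2t)$ for $t\le 1/2$ and $H(x,t)=G(x,2t-1)$ for $t\ge 1/2$, and dismisses the verification as routine. Your worry about the seam is legitimate: with the two-piece concatenation, the left one-sided constant value of $H_x$ at $t=1/2$ is only guaranteed to be equal or $\kappa'$-adjacent to $g(x)$, and likewise for the right one-sided value, so both can differ from $H_x(1/2)=g(x)$ and need not be adjacent to each other (take $g(x)=1$ with left value $0$ and right value $2$ in $\Z$ under $c_1$); this violates the interior-point clause of Definition~\ref{real-path}, which demands that the two one-sided values be equal or adjacent with at least one equal to the value at the point. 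Your flat middle third at the constant value $g(x)$ repairs exactly this defect: at each seam one of the two one-sided values is $g(x)$ itself, equal to the value of $H_x$ there, while the other is equal or adjacent to it by the endpoint clauses for $F_x$ at $1$ and $G_x$ at $0$, and the middle third is constant so its interior points are trivial. In short, your proof is not just a variant of the paper's but a more careful one; the paper's two-piece concatenation needs the same (or an equivalent) repair to satisfy the stated definition of a real path.
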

\begin{proof} We give the proof for the unpointed
assertion. A similar argument can be used to establish
the pointed assertion.

Reflexivity is clear: for any digitally continuous function $f:X \to Y$, the function $F(x,t) = f(x)$ is a real homotopy from $f$ to $f$.

For symmetry, let $f,g:X\to Y$ be digitally continuous with $f\simeq^\R g$, and let $F:X\times[0,1] \to Y$ be a real homotopy from $f$ to $g$. Then define $G:X\times [0,1] \to Y$ by $G(x,t) = F(x,1-t)$. It is easy to verify that $G$ is a real homotopy from $g$ to $f$, and so $g\simeq^\R f$. 

For transitivity, let $f,g,h:X\to Y$ with $f\simeq^\R g$ and $g\simeq^\R h$. Let $F,G:X\times[0,1] \to Y$ be homotopies from $f$ to $g$ and $g$ to $h$, respectively. Then define $H:X\times [0,1] \to Y$ by
\[ H(x,t) = \begin{cases} 
F(x,2t) &\text{ if } t \le 1/2, \\
G(x,2t-1) &\text{ if } t \ge 1/2.
\end{cases} \]
Again it is routine to check that $H$ is a real homotopy from $f$ to $h$, and so $f \simeq^\R h$ as desired.
\end{proof}

Next we show that long homotopy of functions implies real homotopy.

\begin{Theorem}
\label{long-implies-real}
Let $X$ and $Y$ be digital images, and let $f,g:X \to Y$ be continuous. If $f\simeq^L g$ then $f\simeq^\R g$. If $f$ and $g$ are pointed long homotopic, then
they are pointed real homotopic.
\end{Theorem}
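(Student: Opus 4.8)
The plan is to reparametrize the integer ``time axis'' $\Z$ of the given long homotopy onto the open interval $(0,1)$, converting the discrete long homotopy into a step-function real homotopy whose jumps occur only at a discrete set of interior points. First I would fix, once and for all and independently of $x$, a strictly increasing sequence of reals $\{s_n\}_{n\in\Z} \subset (0,1)$ with $\lim_{n\to-\infty}s_n = 0$ and $\lim_{n\to+\infty}s_n = 1$; for concreteness one may take $s_n = \tfrac12\bigl(1 + \tfrac{2}{\pi}\arctan n\bigr)$. The half-open intervals $[s_n,s_{n+1})$ then partition $(0,1)$, so for each $s\in(0,1)$ there is a unique integer $n(s)$ with $s_{n(s)}\le s < s_{n(s)+1}$.

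Next, given a long homotopy $F: X\times\Z \to Y$ from $f$ to $g$, I would define $R: X\times[0,1]\to Y$ by $R(x,0)=f(x)$, $R(x,1)=g(x)$, and $R(x,s)=F(x,n(s))$ for $s\in(0,1)$, and then verify the three conditions of Definition~\ref{real-def}. The endpoint condition holds by construction. For fixed $t$, the slice $R_t$ equals $f$, $g$, or an integer slice $F_{n(t)}$ of $F$, each of which is $(\kappa,\lambda)$-continuous because $F$ is a long homotopy; so every $R_t$ is continuous. The interior jumps of $R_x$ sit at the points $s_n$, where the one-sided constant values are $F(x,n-1)$ and $F(x,n)$; these are equal or $\lambda$-adjacent because $F_x$ is $(c_1,\lambda)$-continuous, and the value $R_x(s_n)=F(x,n)$ coincides with the right-hand value, exactly matching the interior bullet of Definition~\ref{real-path}.

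The crux, and the step I expect to be the main obstacle, is verifying the real-path conditions at the two accumulation endpoints $s=0$ and $s=1$, where infinitely many of the $s_n$ pile up. Here I would invoke the defining property of a long homotopy that each slice $F_x$ is eventually constant in both directions: there is $N_{F,x}$ with $F(x,n)=f(x)$ for $n\le -N_{F,x}$ and $F(x,n)=g(x)$ for $n\ge N_{F,x}$. Consequently $R_x$ is constant with value $f(x)$ on $(0,\,s_{-N_{F,x}})$ and constant with value $g(x)$ on $(s_{N_{F,x}},\,1)$, which supplies the required $\epsilon_0$ and $\epsilon_1$ and shows that no jump accumulates at either endpoint. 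It is precisely this ``eventually constant'' hypothesis that makes the reparametrization land inside the class of real paths; without it the jumps could accumulate at $0$ or $1$ and the construction would fail.

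Finally, for the pointed statement, if the long homotopy $F$ holds $x_0\mapsto y_0$ fixed, then $R(x_0,s)=F(x_0,n(s))=y_0$ for $s\in(0,1)$ and $R(x_0,0)=f(x_0)=y_0=g(x_0)=R(x_0,1)$, so $R(x_0,s)=y_0$ for every $s\in[0,1]$ and $R$ is a pointed real homotopy. This completes both assertions.
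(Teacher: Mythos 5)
Your proof is correct and takes essentially the same route as the paper: your reparametrization $s \mapsto n(s)$ via the monotone sequence $\{s_n\}$ is exactly the paper's $t \mapsto \lfloor h(t) \rfloor$ for a homeomorphism $h:(0,1)\to\R$, and your verifications (the slices $R_t$, the interior jumps at the $s_n$ via $(c_1,\lambda)$-continuity of $F_x$, and the eventual-constancy argument supplying $\epsilon_0$ and $\epsilon_1$) match the paper's. Nothing further is needed.
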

\begin{proof}
Let $h:(0,1) \to \R$ be a homeomorphism with $\lim_{x\to 0} h(x) = -\infty$ and $\lim_{x\to 1} h(x) = \infty$. For example $h$ can be taken to be a rescaled version of the tangent function. Let $H:X\times \Z \to Y$ be a long homotopy from $f$ to $g$, and define $F:X\times [0,1] \to Y$ by: 
\[
F(x,t) = \begin{cases}
f(x) &\text{ if } t=0, \\
H(x,\lfloor h(t)\rfloor ) &\text{ if } t\in (0,1), \\
g(x) &\text{ if } t=1.
\end{cases}
\]
We claim that $F$ is a real homotopy from $f$ to $g$. We have defined $F$ so that $F(x,0) = f(x)$ and $F(x,1) = g(x)$ for all $x$. Observe also that each induced function $F_t(x)$ is continuous - for $t\in \{0,1\}$ this is true because $f$ and $g$ are continuous, and for other $t$ because $H_s$ is continuous for any $s \in \Z$. 

It remains to show that the induced function $F_x:[0,1] \to Y$ is a real path for every $x$. Note that the value of $H_x(t) = H(x,\lfloor h(t) \rfloor)$ changes only when $h(t)$ is an integer. When $h(t)$ is an integer, the value of $H_x$ can only change from one point of $Y$ to an adjacent point. Thus for any $t \in (0,1)$, there is some $\epsilon_t$ such that $H_x$ is constant on $(t-\epsilon_t,t)$ and $(t,t+\epsilon_t)$, and these constant values are adjacent, and one of them equals $F_x(t)$. 

It remains to show the existence of $\epsilon_0$ and $\epsilon_1$ as in Definition \ref{real-def}.
Because $H$ is a long homotopy, there is a natural number $N_x$ such that $H(x,t) = g(x)$ whenever $t>N_x$ and $H(x,t) = f(x)$ whenever $t<-N_x$. Then choose $\epsilon_0$ with $0<\epsilon_0 < h^{-1}(-N_x)$, and then $F_x$ will be constant on $[0,\epsilon_0)$ as required. Choose $\epsilon_1$ such that $h^{-1}(N_x) < \epsilon_1 < 1$, and $F_x$ will be constant on $(1-\epsilon_1, 1]$, as required. Thus $F_x$ is a real path and so $F$ is a real homotopy.
\end{proof}

Next we show that real homotopy is weaker than digital homotopy, and that the two notions are equivalent when the domain is finite. This result is analogous to Proposition \ref{long-finite-image} for long homotopy. 

\begin{Theorem}
\label{long-and-real}
Let $(X,\kappa)$ and $(Y,\kappa')$ be digital images, 
and let $f,g:X \to Y$ be $(\kappa,\kappa')$--continuous. Then $f\simeq g$ implies $f\simeq^\R g$, and the converse is true when $X$ is finite. If $f$ and $g$ are pointed
homotopic, then they are pointed real homotopic, and the 
converse is true when $X$ is finite.
\end{Theorem}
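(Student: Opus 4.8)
The plan is to prove the forward implication by chaining two results already in hand, and to prove the converse (for finite $X$) by discretizing a real homotopy into an ordinary digital homotopy, the key input being that each real path has only finitely many jumps. For the forward direction I would simply observe that $f \simeq g$ gives $f \simeq^L g$ by Proposition~\ref{long-finite-image}, and that $f \simeq^L g$ gives $f \simeq^\R g$ by Theorem~\ref{long-implies-real}; composing these yields $f \simeq^\R g$. Since both cited results respect basepoints, the same chain settles the pointed statement. (One could instead build the real homotopy directly from a digital homotopy $h: X \times [0,m]_{\Z} \to Y$ via the step function $F(x,t) = h(x, \lceil mt \rceil)$, checking that each $F_x$ is a real path because consecutive values of $h_x$ are adjacent or equal; but the chaining argument avoids this computation.)

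For the converse I would assume $X$ finite and let $F: X \times [0,1] \to Y$ be a real homotopy from $f$ to $g$. By Proposition~\ref{finite-jumps} each induced real path $F_x$ has only finitely many jumps, and since $X$ is finite the union $J$ of all jumps over all $x \in X$ is finite. I would then choose a partition $0 = a_0 < a_1 < \cdots < a_q = 1$ of $[0,1]$ whose interior breakpoints include every element of $J$. On each open subinterval $(a_{k-1}, a_k)$ no $F_x$ jumps, so each $F_x$ is constant there; hence $F_t$ is one and the same $(\kappa,\kappa')$-continuous function $\phi_k: X \to Y$ for every $t \in (a_{k-1}, a_k)$.

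The core of the argument is then to read off a digital homotopy from the finite list of slices $f = \psi_0,\ \psi_1 = \phi_1,\ \ldots,\ \psi_q = \phi_q,\ \psi_{q+1} = g$. Setting $h(x,k) = \psi_k(x)$ defines $h: X \times [0,q+1]_{\Z} \to Y$ with $h(x,0)=f(x)$ and $h(x,q+1)=g(x)$, and each slice $\psi_k$ is $(\kappa,\kappa')$-continuous since it is either $f$, $g$, or some $F_t$. To see each $h_x$ is a path I would invoke the real-path conditions of Definition~\ref{real-path}: at $t=0$ the value $\phi_1(x)$ is adjacent or equal to $f(x)$, at each interior breakpoint $a_k$ the two side values $\phi_k(x)$ and $\phi_{k+1}(x)$ are adjacent or equal, and at $t=1$ the value $\phi_q(x)$ is adjacent or equal to $g(x)$. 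Thus consecutive $\psi_k(x)$ are always adjacent or equal, so $h_x$ is a $(2,\kappa')$-path and $h$ is a digital homotopy from $f$ to $g$. For the pointed case, if $F(x_0,t)=y_0$ for all $t$ then every slice satisfies $\psi_k(x_0)=y_0$, so $h$ holds $x_0$ fixed.

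I expect the main obstacle to lie entirely in the converse, specifically in two pieces of bookkeeping: first, justifying that each $F_x$ is genuinely constant on any interval containing none of its jumps (a local-constancy-plus-connectedness argument implicit in Definition~\ref{real-path}, which guarantees that $\phi_k$ is well defined), and second, the careful endpoint analysis at $t=0$ and $t=1$, where a jump is permitted exactly at the endpoint so that the extreme slices $\phi_1$ and $\phi_q$ must be compared to $f$ and $g$ rather than to a neighboring $\phi$. Once these are dispatched, verifying the three defining properties of the discrete homotopy is routine.
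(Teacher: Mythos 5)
Your proposal is correct and follows essentially the same route as the paper: the forward direction is obtained by chaining $f\simeq g \Rightarrow f\simeq^L g \Rightarrow f\simeq^\R g$, and the converse for finite $X$ is proved by using Proposition~\ref{finite-jumps} to collect the finitely many jump times of all the paths $F_x$ and sampling the real homotopy at finitely many times to produce a digital homotopy, with adjacency of consecutive slices coming from the jump condition in Definition~\ref{real-path}. The only cosmetic difference is that you sample the constant values on the open subintervals between jumps while the paper samples at chosen points between consecutive jumps; these coincide.
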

\begin{proof} We give the proof for the unpointed
assertion. A similar argument can be used to establish
the pointed assertion.

First we assume that $f\simeq g$. This implies, by
Proposition~\ref{l-finite-image}, that
$f \simeq^L g$. Hence by Theorem~\ref{long-implies-real},
$f \simeq^\R g$.

%
%

Now for the converse we assume that $f\simeq^\R g$ and $X$ is finite, and show that $f\simeq g$. Let $F$ be a real homotopy from $f$ to $g$. Since $X$ is finite and each real path $F_x(t)$ has finitely many jumps, there are only finitely many values of $t \in [0,1]$ which can be jumps for any of the paths $F_x(t)$. Let $j_0 < \dots < j_k$ be these jump points, and choose $t_1,\dots, t_{k-1}$ so that $j_i < t_i < j_{i+1}$ for each $i$. Also let $t_0 = 0$ and $t_{k+1} = 1$. Since jumps in real paths only move to adjacent points, $F_x(t_i)$ is adjacent or equal to $F_x(t_{i+1})$ for each $i$. 

Now define $G:X\times [0,k+1]_\Z \to Y$ by $G(x,i) = F(x,t_i)$. We will show that $G$ is a homotopy from $f$ to $g$. We have $G(x,0) = F(x,t_0) = F(x,0) = f(x)$ and $G(x,k+1) = F(x,t_{k+1}) = F(x,1) = g(x)$. Since $F_{t_i}(x)$ is $(\kappa,\kappa')$--continuous for each $t_i\in [0,1]$, we have $G_i(x)$ $(\kappa,\kappa')$-continuous for each $i \in [0,k+1]_\Z$ as required. 

It remains to show that the induced function $G_x:[0,k+1]_\Z \to Y$ is $(2,\kappa')$--continuous for each $x\in X$. Equivalently, we must show that $G_x(i)$ is $\kappa'$--adjacent or equal to $G_x(i+1)$ for each $i$. But we have already stated that $F_x(t_i)=G_x(i)$ is adjacent or equal to $F_x(t_{i+1})=G_x(i+1)$ for each $i$, as desired.
\end{proof}

In the case when $X$ is finite, the converse above shows that a real homotopy implies an ordinary (finite) homotopy, which by Proposition \ref{long-finite-image} implies a long homotopy. Combining all these results gives:
\begin{Corollary}\label{finite-all-equiv}
Let $X$ be a finite digital image, $Y$ be any digital image, and $f,g:X\to Y$ be continuous. Then the following three statements are equivalent: $f\simeq g$, $f\simeq^L g$, and $f\simeq^\R g$. Similar equivalences hold for pointed relations.
\end{Corollary}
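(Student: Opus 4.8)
The plan is to establish all three equivalences at once by closing a single cycle of implications, each link of which is already available from the preceding results. Arranging the relations in the order $\simeq$, $\simeq^L$, $\simeq^\R$, I would prove the chain
\[ f \simeq g \;\Longrightarrow\; f \simeq^L g \;\Longrightarrow\; f \simeq^\R g \;\Longrightarrow\; f \simeq g, \]
from which the pairwise equivalence of all three relations is immediate, since any two of them are linked by a path of implications in both directions around the cycle.

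First I would invoke Proposition~\ref{long-finite-image} to pass from $f \simeq g$ to $f \simeq^L g$; this direction holds for arbitrary $X$ and needs no finiteness hypothesis. Next I would apply Theorem~\ref{long-implies-real} to obtain $f \simeq^\R g$ from $f \simeq^L g$, which again requires no finiteness. Finally, to close the loop, I would use the converse half of Theorem~\ref{long-and-real}, which yields $f \simeq g$ from $f \simeq^\R g$ precisely under the assumption that $X$ is finite. Because finiteness of $X$ enters only in this last step, and it is exactly the standing hypothesis of the corollary, the cycle is complete and the three relations coincide.

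The pointed assertions follow by running the identical cycle with the pointed versions of Proposition~\ref{long-finite-image}, Theorem~\ref{long-implies-real}, and Theorem~\ref{long-and-real}, each of which is stated alongside its unpointed counterpart; no new argument is needed.

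There is no genuine obstacle here beyond bookkeeping: all of the substantive content—in particular the construction that recovers an ordinary finite homotopy from a real homotopy when the domain is finite—has already been carried out in the proof of Theorem~\ref{long-and-real}, where the finiteness of $X$ together with Proposition~\ref{finite-jumps} guarantees that only finitely many jump times arise and lets one sample the real homotopy at finitely many parameter values. Accordingly I would simply cite that result rather than reproduce its argument, and the corollary reduces to the observation that the three implications above form a closed cycle.
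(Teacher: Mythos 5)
Your proof is correct and matches the paper's own argument: the paper likewise closes the cycle $\simeq^\R \Rightarrow \simeq \Rightarrow \simeq^L \Rightarrow \simeq^\R$ using the converse half of Theorem~\ref{long-and-real} (where finiteness of $X$ enters), Proposition~\ref{long-finite-image}, and Theorem~\ref{long-implies-real}. The only difference is the starting node of the cycle, which is immaterial.
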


The following simple question seems hard to answer:
\begin{Question}
Is Corollary \ref{finite-all-equiv} true without the finiteness assumption?
\end{Question}

With our real homotopy relation we can make the obvious definition for real homotopy type of digital images.

\begin{Definition}
\label{real-htpy-equiv}
We say digital images $(X,\kappa)$ and $(Y, \kappa')$
have the {\em same real homotopy type}, denoted
$X \simeq_{\kappa,\kappa'}^\R Y$ or $X \simeq^\R Y$ when
$\kappa$ and $\kappa'$ are understood,
if there are continuous functions $f: X \rightarrow Y$
and $g: Y \rightarrow X$ such that 
$g \circ f \simeq^\R 1_X$ and $f \circ g \simeq^\R 1_Y$.
If there exist $x_0 \in X$ and $y_0 \in Y$ such that
$f(x_0)=y_0$, $g(y_0)=x_0$, and the real homotopies
implicit above are pointed with respect to $x_0$ and $y_0$, we say $X$ and $Y$ have the 
{\em same pointed real homotopy type}, denoted
$(X,x_0) \simeq_{\kappa,\kappa'}^\R (Y,y_0)$ or
$(X,x_0) \simeq^\R (Y,y_0)$.
\end{Definition}

Having the same real homotopy type is also easily seen to be an equivalence relation.

\begin{Theorem}
\label{real-equiv-rel}
Having the same real homotopy type or pointed real homotopy type is an equivalence relation
among digital images. 
\end{Theorem}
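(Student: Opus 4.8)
The plan is to reduce all three properties to formal consequences of Theorem~\ref{real-equiv} (which already establishes that real homotopy of maps is an equivalence relation), together with a single compatibility lemma stating that real homotopy of maps is preserved under pre- and post-composition with continuous functions. Reflexivity and symmetry are immediate. For reflexivity I would take $f=g=1_X$; then $g\circ f=1_X\simeq^\R 1_X$ by reflexivity of real homotopy of maps, so $X\simeq^\R X$. For symmetry, if $X\simeq^\R Y$ is witnessed by $f:X\to Y$ and $g:Y\to X$ with $g\circ f\simeq^\R 1_X$ and $f\circ g\simeq^\R 1_Y$, then the same pair in reversed roles witnesses $Y\simeq^\R X$, since these are exactly the two conditions required by Definition~\ref{real-htpy-equiv} for $Y\simeq^\R X$.

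Transitivity is the substantive case. Suppose $X\simeq^\R Y$ via $f:X\to Y$, $g:Y\to X$, and $Y\simeq^\R Z$ via $h:Y\to Z$, $k:Z\to Y$. I would propose the composites $h\circ f:X\to Z$ and $g\circ k:Z\to X$ as witnesses for $X\simeq^\R Z$, and verify $(g\circ k)\circ(h\circ f)\simeq^\R 1_X$ and $(h\circ f)\circ(g\circ k)\simeq^\R 1_Z$. Rewriting $(g\circ k)\circ(h\circ f)=g\circ(k\circ h)\circ f$ and using $k\circ h\simeq^\R 1_Y$, the compatibility lemma gives $g\circ(k\circ h)\circ f\simeq^\R g\circ 1_Y\circ f=g\circ f$, while $g\circ f\simeq^\R 1_X$ by hypothesis; transitivity of real homotopy of maps (Theorem~\ref{real-equiv}) then yields $(g\circ k)\circ(h\circ f)\simeq^\R 1_X$. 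The second condition is handled symmetrically, writing $(h\circ f)\circ(g\circ k)=h\circ(f\circ g)\circ k$ and using $f\circ g\simeq^\R 1_Y$ and $h\circ k\simeq^\R 1_Z$.

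The one step requiring genuine verification, and the main obstacle, is the compatibility lemma: if $\phi\simeq^\R\psi$ as maps $Y\to Y$, then $g\circ\phi\circ f\simeq^\R g\circ\psi\circ f$. Given a real homotopy $F:Y\times[0,1]\to Y$ from $\phi$ to $\psi$, I would set $G(x,t)=g(F(f(x),t))$ and check the three clauses of Definition~\ref{real-def}. The endpoint conditions $G(x,0)=g(\phi(f(x)))$ and $G(x,1)=g(\psi(f(x)))$ are immediate, and each time-slice $G_t=g\circ F_t\circ f$ is continuous by Proposition~\ref{composition}. The crux is the path clause: each induced function $G_x(t)=g(F_{f(x)}(t))$ must be a real path. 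For this I would prove the small lemma that composing a real $\kappa'$-path with a $(\kappa',\kappa)$-continuous map yields a real $\kappa$-path, since local constancy is inherited directly, and at each jump the two one-sided values of $F_{f(x)}$ are equal or $\kappa'$-adjacent, so their images under $g$ are equal or $\kappa$-adjacent by Proposition~\ref{cont-connect}; thus the real-path structure of Definition~\ref{real-path} is preserved.

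Finally, for the pointed version, suppose the basepoints satisfy $f(x_0)=y_0$, $g(y_0)=x_0$, $h(y_0)=z_0$, $k(z_0)=y_0$ and all witnessing real homotopies are pointed. Then $(h\circ f)(x_0)=z_0$ and $(g\circ k)(z_0)=x_0$, and the homotopy $G$ constructed above fixes $x_0$, since $G(x_0,t)=g(F(y_0,t))=g(y_0)=x_0$ whenever $F$ fixes $y_0$. Reflexivity and symmetry in the pointed setting follow exactly as in the unpointed case, completing the proof that pointed real homotopy type is also an equivalence relation.
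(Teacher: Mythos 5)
Your proposal is correct and follows essentially the same route as the paper: the same composite witnesses $h\circ f$ and $g\circ k$ for transitivity, and the same key construction $G(x,t)=g(F(f(x),t))$ (the paper's $H(x,t)=f'(G(f(x),t))$), with real homotopy of maps being an equivalence relation (Theorem~\ref{real-equiv}) supplying the final step. Your explicit check that post-composing a real path with a continuous map yields a real path is just a spelled-out version of what the paper declares ``easy to see.''
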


\begin{proof}
We prove the unpointed assertion. Simple modifications to the argument give the pointed assertion. 

For the reflexive property it is easy to see that an identity map
$1_X$ shows that $X \simeq^\R X$. The symmetric property follows from
the symmetry in Definition~\ref{real-htpy-equiv}. It remains to prove transitivity.

Suppose
$X \simeq^\R Y \simeq^\R W$. Then there are continuous
functions $f: X \rightarrow Y$, $f': Y \rightarrow X$,
$g: Y \rightarrow W$, and $g': W \rightarrow Y$, and
real homotopies $F: X \times [0,1] \rightarrow X$ from
$f' \circ f$ to $1_X$, 
$F': Y \times [0,1] \rightarrow Y$ from $f \circ f'$ to
$1_Y$, $G: Y \times [0,1] \rightarrow Y$ from
$g' \circ g$ to $1_Y$, and 
$G': W \times [0,1] \rightarrow W$ from $g \circ g'$ 
to $1_W$. We will show that $X \simeq^\R W$ using the functions $g\circ f: X \to W$ and $f'\circ g': W \to X$.

Consider the function 
$H: X \times [0,1] \rightarrow X$
defined by $H(x,t)=f'(G(f(x),t))$. We will show that $H$ is a real homotopy from $f'\circ g'\circ g\circ f$ to $f'\circ f$. 
First observe that 
\[ H(x,0) = f'(G(f(x),0)) = f'(g'(g(f(x)))) = f'\circ g'\circ g\circ f (x)\]
and 
\[ H(x,1) = f'(G(f(x),1)) = f'(f(x)) = f'\circ f(x). \]
Also observe that $H_t = f'\circ G_t\circ f$, and thus $H_t$ is continuous by Theorem \ref{composition}. For $H_x$, we have $H_x = f'\circ G_{f(x)}$. Since $G_{f(x)}$ is a real path and $f'$ is continuous, it is easy to see that $H_x$ is a real path. 

Thus we have shown that $f'\circ g'\circ g \circ f \simeq^\R f'\circ f$. By our assumption we have $f' \circ f \simeq^\R 1_X$, and thus by transitivity of real homotopy (Theorem \ref{real-equiv}) we have $f'\circ g'\circ g \circ f \simeq^\R 1_X$. A similar argument shows that $g\circ f \circ f'\circ g' \simeq 1_W$, and thus $X\simeq^\R W$ as desired.
\end{proof}

Because of the theorem above, when $X\simeq^\R Y$, we say $X$ and $Y$ are \emph{real homotopy equivalent}.

\begin{Corollary}
\label{htpy-type-implies-real-type}
If $X \simeq Y$ then $X \simeq^{\R} Y$. If $(X,x_0) \simeq (Y,y_0)$
then $(X,x_0) \simeq^{\R} (Y,y_0)$. If both $X$ and $Y$ are finite, then the converses hold.
\end{Corollary}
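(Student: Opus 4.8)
The plan is to derive this corollary directly from the function-level comparison in Theorem~\ref{long-and-real}, applied to the two composite homotopies that appear in the definitions of homotopy type (Definition~\ref{htpy-type}) and real homotopy type (Definition~\ref{real-htpy-equiv}). Since both definitions are existential statements about a pair of maps $f: X \to Y$ and $g: Y \to X$, the idea is simply to reuse the same pair $f,g$ throughout and translate each defining homotopy between $\simeq$ and $\simeq^\R$ using Theorem~\ref{long-and-real}.

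For the forward implication, suppose $X \simeq Y$, realized by $f: X \to Y$ and $g: Y \to X$ with $g \circ f \simeq 1_X$ and $f \circ g \simeq 1_Y$. The composite $g \circ f$ and $1_X$ are both maps $X \to X$, and $f \circ g$ and $1_Y$ are both maps $Y \to Y$. Applying the unconditional forward half of Theorem~\ref{long-and-real} to each pair (this half requires no finiteness hypothesis) yields $g \circ f \simeq^\R 1_X$ and $f \circ g \simeq^\R 1_Y$. By Definition~\ref{real-htpy-equiv}, the same $f$ and $g$ now witness $X \simeq^\R Y$. The pointed statement is identical, invoking the pointed clause of Theorem~\ref{long-and-real} and noting that the basepoint conditions $f(x_0)=y_0$, $g(y_0)=x_0$ are carried along unchanged.

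For the converse, assume $X$ and $Y$ are both finite and $X \simeq^\R Y$, realized by $f,g$ with $g \circ f \simeq^\R 1_X$ and $f \circ g \simeq^\R 1_Y$. Here the key point---and the only place finiteness enters---is that the converse half of Theorem~\ref{long-and-real} requires the \emph{domain} of the real homotopy to be finite. The homotopy $g \circ f \simeq^\R 1_X$ has domain $X$, so its conversion back to ordinary homotopy needs $X$ finite; the homotopy $f \circ g \simeq^\R 1_Y$ has domain $Y$, so its conversion needs $Y$ finite. This is precisely why \emph{both} images must be assumed finite, rather than just one. With both hypotheses in hand, Theorem~\ref{long-and-real} (equivalently, Corollary~\ref{finite-all-equiv}) gives $g \circ f \simeq 1_X$ and $f \circ g \simeq 1_Y$, so $X \simeq Y$ by Definition~\ref{htpy-type}. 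The pointed converse follows the same way.

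The argument is short, and there is no genuine computational obstacle; the one subtlety worth flagging is the bookkeeping of domains just described, which is exactly what forces the finiteness hypothesis to apply to both images. I would present the proof in essentially the three steps above, each a single application of Theorem~\ref{long-and-real}.
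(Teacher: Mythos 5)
Your proposal is correct and matches the paper's proof, which simply states that the corollary ``follows easily from Theorem~\ref{long-and-real}''; you have spelled out exactly the intended application of that theorem to the composites $g \circ f$ and $f \circ g$. Your observation about why finiteness of \emph{both} images is needed (each real homotopy has a different domain) is a nice explicit touch that the paper leaves implicit.
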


\begin{proof} This follows easily from Theorem~\ref{long-and-real}.
\end{proof}

\begin{Corollary}
\label{long-implies-real-type}
If $X \simeq^L Y$, then 
$X \simeq^\R Y$.
If $(X,x_0) \simeq^L (Y,y_0)$, 
then $(X,x_0) \simeq^\R (Y,y_0)$.
\end{Corollary}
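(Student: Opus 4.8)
The plan is to recognize that this corollary is a direct transfer of the function-level implication Theorem~\ref{long-implies-real} across the structurally identical Definitions~\ref{long-equiv-def} and~\ref{real-htpy-equiv}. Both ``same long homotopy type'' and ``same real homotopy type'' are defined by the existence of a single pair of continuous maps $f\colon X \to Y$ and $g\colon Y \to X$ whose two composites are long homotopic (respectively, real homotopic) to the appropriate identity maps. So the natural strategy is to take the very maps that witness $X \simeq^L Y$ and show that they also witness $X \simeq^\R Y$, with the conversion of each homotopy supplied by Theorem~\ref{long-implies-real}.

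Concretely, I would first invoke the hypothesis $X \simeq^L Y$ and Definition~\ref{long-equiv-def} to extract continuous functions $f\colon X \to Y$ and $g\colon Y \to X$ satisfying $g \circ f \simeq^L 1_X$ and $f \circ g \simeq^L 1_Y$. Then I would apply Theorem~\ref{long-implies-real} twice, once to each of these long homotopies, obtaining $g \circ f \simeq^\R 1_X$ and $f \circ g \simeq^\R 1_Y$. These are precisely the two conditions appearing in Definition~\ref{real-htpy-equiv}, so the same pair $(f,g)$ demonstrates $X \simeq^\R Y$, completing the unpointed case.

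For the pointed statement, the maps $f$ and $g$ coming from $(X,x_0) \simeq^L (Y,y_0)$ already satisfy $f(x_0)=y_0$ and $g(y_0)=x_0$, and the two long homotopies hold the relevant basepoints fixed. Here I would use the pointed clause of Theorem~\ref{long-implies-real}, which turns each pointed long homotopy into a pointed real homotopy, so that $g \circ f \simeq^\R 1_X$ and $f \circ g \simeq^\R 1_Y$ are both pointed with respect to $x_0$ and $y_0$; Definition~\ref{real-htpy-equiv} then yields $(X,x_0) \simeq^\R (Y,y_0)$.

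I do not expect any real obstacle here: all of the genuine work, namely reparametrizing a long homotopy over $\Z$ into a real homotopy over $[0,1]$ via a homeomorphism $(0,1)\to\R$ and checking the real-path condition at the endpoints, has already been carried out in the proof of Theorem~\ref{long-implies-real}. The only point that must be observed explicitly is that a single pair of witnessing maps can be reused for both notions, which is immediate from the parallel form of the two definitions. Accordingly, the proof is essentially a two-line application of the earlier theorem.
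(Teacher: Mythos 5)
Your proposal is correct and matches the paper's own proof, which likewise cites Definitions~\ref{long-equiv-def} and~\ref{real-htpy-equiv} together with Theorem~\ref{long-implies-real}; you have simply written out the two applications of that theorem (one per composite, with the pointed clause for the pointed case) that the paper leaves implicit.
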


\begin{proof}
This follows from 
Definitions~\ref{long-equiv-def} and 
~\ref{real-htpy-equiv} and 
Theorem~\ref{long-implies-real}.
\end{proof}

Note that in light of 
Corollary~\ref{long-implies-real-type},
Example~\ref{2-infinite} below shows it is possible
for two digital images to be pointed
real homotopy equivalent without being
pointed homotopy equivalent.

\section{Examples}
A finite set and an infinite set cannot have the same digital homotopy type. In particular the set $\Z^n$ is not the same digital homotopy type as a single point, even though the continuous objects they represent ($\R^n$ and a point) are classically homotopy equivalent. In the following example we show that $\Z^n$ is homotopically similar to a point, and also of the same long homotopy type (and thus the same real homotopy type). 
\begin{Example}
\label{contractible}
Let $x=(x_0,x_1,\ldots,x_{n-1}) \in \Z^n$ and 
let $u,v \in [1,n]_{\Z}$.
Then $(\{x\},x)$ and $(\Z^n,x)$ are
$(c_u,c_v)$-pointed homotopically similar and have the same
$(c_u,c_v)$-pointed long homotopy type.
\end{Example}

{\em Proof}:
Corresponding to Definition~\ref{htpy-sim-def},
let $X_j = \{x\}$, let 
$Y_j = \Pi_{i=0}^{n-1} [x_i-j,x_i+j]_{\Z}$,
let $f_j$ be the inclusion map, and let $g_j$ be the
constant map with image $\{x\}$. Since a digital 
cube is pointed $(c_u,c_v)$-contractible with 
respect to any of its points~\cite{Boxer94}, the 
assertion of pointed homotopic similarity follows.

Let $H: \Z^n \times \N^*
     \rightarrow \Z^n$ be the map defined as follows.
Let $H(y,0)=y$.
     For $t>0$, let $q$ be the reduction of $t$ modulo $n$, and we bring the $q$-th coordinate of $H(y,t)$ one unit closer to the $q$-th coordinate of $x$,
     i.e.,
     if $H((y_0,\ldots,y_{n-1}),t-1) = (z_0, \ldots, z_{n-1})$
     then 
     \[ H((y_0,\ldots,y_{n-1}),t) = \left \{ \begin{array}{ll}
     (z_0, \ldots, z_{q-1}, z_q -1, z_{q+1}, \ldots, z_{n-1})
     & \mbox{if } z_q > x_q; \\
     (z_0, \ldots, z_{n-1}) & \mbox{if } z_q = x_q; \\
      (z_0, \ldots, z_{q-1}, z_q +1, z_{q+1}, \ldots, z_{n-1})
     & \mbox{if } z_q < x_q.
     \end{array} \right .
\]
Since, in a single time step, $H$ changes only one coordinate,
it is easily seen that this map holds $x$ fixed and,
for all $u,v \in [1,n]_{\Z}$, is a $(c_u,c_v)$-l-homotopy
from $1_{{\Z}^n}$ to the map $i_x \circ \overline{x}$, where
$\overline{x}: {\Z}^n \to \{x\}$ is a constant map and
$i_x: \{x\} \to \Z^n$ is the inclusion function.
From Proposition~\ref{l-implies-long},
we have that $1_{\Z^n}$ and
$i_x \circ \overline{x}$ are pointed long homotopic. Since
$1_{\{x\}}=\overline{x} \circ i_x$,
the assertion of the same pointed long homotopy type follows. $\Box$

Recall that a tree is an acyclic graph in which
every pair of points is connected by a unique 
injective path. We consider both finite and infinite
trees.

\begin{Example}
\label{tree-ex}
Let $(X,\kappa)$ be a digital image that is a tree.
Let $x_0 \in X$. Then $(X,x_0)$ is pointed homotopically
similar to, and has the same pointed long homotopy type,
as $(\{x_0\},x_0)$.
\end{Example}

\begin{proof}
In the following, for $x \neq x_0$ we use 
$parent(x)$ to denote the unique vertex in
$X$ adjacent to $x$ along the unique path in $X$ from
$x$ to $x_0$; and, by $dist(x,y)$ (the {\em distance} between
vertices $x$ and $y$) we mean the length of the path in $X$
from $x$ to $y$. Note if $dist(x,x_0)=n>0$, then
$dist(parent(x),x_0)=n-1$.

Corresponding to the notation of
Definition~\ref{htpy-sim-def},
let $X_j = \{x \in X \, | \, dist(x,x_0) \leq j\}$. Let
$Y=Y_j=\{x_0\}$. Let $f_j: X_j \to Y_j$ be the function
$f_j(x)=x_0$. Let $g_j: Y_j \to X_j$ be the function
$g(x_0)=x_0$. 
Let $H_j: X_j \times [0,j]_{\Z} \to X_j$ be the
function
\[ H_j(x,t) = \left \{ \begin{array}{ll}
            x & \mbox{if } t=0; \\
            x_0 & \mbox{if } t>0 \mbox{ and }
                  H(x,t-1)=x_0; \\
            parent(H(x,t-1)) & \mbox{if } t>0 
                 \mbox{ and } H(x,t-1) \neq x_0.
            \end{array} \right .
\]
Then $H_j$ is a pointed homotopy from $1_{X_j}$ to
$g_j \circ f_j$. Further, $f_j \circ g_j = 1_{Y_j}$.
It follows easily that $(X,x_0)$ and $(\{x_0\},x_0)$ are
pointed homotopically similar.

Let $H: X \times {\N}^* \to X$ be the
function
\[ H(x,t) = \left \{ \begin{array}{ll}
            x & \mbox{if } t=0; \\
            x_0 & \mbox{if } t>0 \mbox{ and }
                  H(x,t-1)=x_0; \\
            parent(H(x,t-1)) & \mbox{if } t>0 
                 \mbox{ and } H(x,t-1) \neq x_0.
            \end{array} \right .
\]
Then $H$ is a pointed l-homotopy from $1_X$ to
$\overline{x_0}$. From
Proposition~\ref{l-implies-long},
$1_X$ and $i_{\{x_0\}} \circ \overline{x_0}$
are pointed long homotopic, where
$i_{\{x_0\}}$ is the inclusion function of $\{x_0\}$ into $\Z^n$.
Since $\overline{x_0} \circ i_{\{x_0\}}=1_{\{x_0\}}$, it follows that $(X,x_0)$
and $(\{x_0\},x_0)$ have the same pointed long 
homotopy type.
\end{proof}


\begin{Example}
\label{2-infinite}
Let $X = \Z \times \{0\} \subset \Z^2$ and let
$Y = X \cup (\{0\} \times \N) \subset 
\Z^2$ (see Figure~\ref{perp-fig}). Then
$(X,(0,0)) \simeq_{c_1,c_1}^s (Y,(0,0))$, and 
$(X,(0,0)) \simeq_{c_1,c_1}^L (Y,(0,0))$, but 
$(X,(0,0))$ and $(Y,(0,0))$ do not have the same
$(c_1,c_1)$-pointed homotopy type.
\end{Example}

{\em Proof}:
It follows from Example~\ref{tree-ex} and
Theorems~\ref{sim-finite} and~\ref{long-nullhtpc} that 
$(X,(0,0)) \simeq_{c_1,c_1}^s (Y,(0,0))$, and 
$(X,(0,0)) \simeq_{c_1,c_1}^L (Y,(0,0))$.

Suppose $(X,(0,0)) \simeq_{c_1,c_1} (Y,(0,0))$. 
Then there are pointed continuous functions $f: X \rightarrow Y$,
$g: Y \rightarrow X$, and pointed homotopies
$H: X \times [0,k]_{\Z} \rightarrow X$ and
$H': Y \times [0,m]_{\Z} \rightarrow Y$ such that
$H(x,0)=x$ and $H(x,k) = g \circ f(x)$ for all $x \in X$, and
$H'(y,0)=y$ and $H'(y,m) = f \circ g(y)$ for all $y \in Y$.

\begin{figure}
\includegraphics[height=2in]{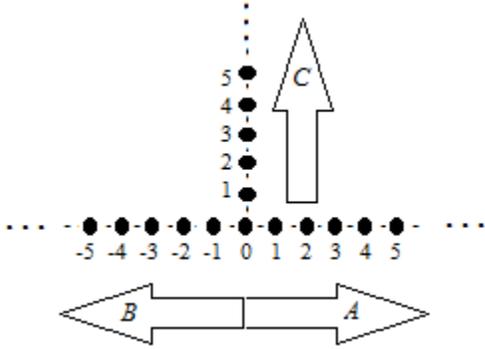}
\caption{The image $Y$ of
Example~\ref{2-infinite}, with the subsets
$A,B,C$ of its proof marked by arrows\label{perp-fig}}
\end{figure}

We show $f$ is a surjection. Note if
$A = \N^* \times \{0\}$, 
$B= \{(n,0) \, | \, n \in \Z, \, (-n,0) \in A\}$,
    and
$C = \{0\} \times \N^*$, we have
$Y = A \cup B \cup C$.
Suppose there exists $p=(u,v) \in Y \setminus f(X)$.
\begin{itemize}
\item If $p \in A$ then, since $f(X)$ is 
      $c_1$-connected and contains $(0,0)$,  
      $A_1=\{(x,0) \, | \, x \geq u \} \subset Y \setminus f(X)$. 
      In particular,
      \begin{equation}
      \label{A2-eq}
A_2=[u,u+2m]_{\Z} \times \{0\} \subset Y \setminus f(X).
      \end{equation}
Since $u>0$, $A_2$ is the set of all points in $Y$ within 
$m$ steps of $(u+m,0)$, so we have
a contradiction of the assumption that $1_Y$ and
$f \circ g$ are homotopic in $m$ steps, as
statement~(\ref{A2-eq}) implies no
point of $(f \circ g)(X)$ is within $m$ steps of
$(u+m,0)$.
      Therefore, we cannot have $p \in A$.
\item The cases $p \in B$ and $p \in C$ yield
      contradictions similarly.
\end{itemize}
Thus, we must have that $f$ is a surjection, since
assuming otherwise yields a contradiction.

Since $Y \setminus \{(0,0)\}$
is disconnected and each of $A$, $B$, and
$C$ is infinite, the fact that $f$ is a continuous
surjection implies there exist infinitely many
$x \in X$ such that $f(x)=(0,0)$.
Therefore, there exist
$p_0=(a,0),p_1=(b,0) \in X$ with $b>a+2k$ such that
$f(p_0)=f(p_1)=(0,0)$. Then
$g\circ f(p_0)=g \circ f(p_1) = g(0,0)$. Therefore,
at least one of $p_0$ or $p_1$ is carried by
$g \circ f$ more than $k$ steps away from itself,
contrary to the assumption that $g \circ f$ and
$1_X$ are homotopic within $k$ steps. The
assertion that $(X,(0,0))$ and $(Y,(0,0))$ do not 
have the same pointed homotopy type
follows from this contradiction. $\Box$

\begin{Example}
There exist digital images $(X,\kappa)$ and $(Y,\lambda)$
that are homotopically similar but not pointed 
homotopically similar, and that have the same long 
homotopy type but not the same pointed long homotopy 
type, and that have the same real homotopy type but
not the same pointed real homotopy type.
\end{Example}

{\em Proof}:
By ~\cite{Haarmann,BoSt}, there exist finite 
digital images $X$ and $Y$ that are homotopically 
equivalent but not pointed homotopically equivalent. By
Theorem~\ref{equiv-for-bdd}, $X$ and $Y$ are homotopically
similar but not pointed homotopically similar. By
Proposition~\ref{hequiv-implies-long}, $X$ and $Y$ have the same
long homotopy type but not the same
pointed long homotopy type. By 
Theorem~\ref{long-and-real}, $X$ and $Y$
have the same real homotopy type but not the same
pointed real homotopy type.
$\Box$

\section{Fundamental groups}
In this section, we show that digital images that are pointed 
homotopically similar, or that have the same pointed real
homotopy type, or that have the same pointed long homotopy type,
have isomorphic fundamental groups.

\begin{Theorem}
\label{Pi1}
Let $(X,x_1) \simeq^s (Y,y_1)$. Let 
$\{X_j,Y_j,f_j,g_j\}_{j=1}^{\infty}$ be as in
Definition~\ref{htpy-sim-def}, and let
$x_1 \in X_1$, $y_1 \in Y_1$.
Then there is an isomorphism 
$F: \Pi_1^{\kappa} (X,x_1) \rightarrow 
\Pi_1^{\lambda} (Y, y_1)$.
\end{Theorem}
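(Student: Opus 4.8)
The plan is to exploit the fact that in this fundamental group every loop, and every homotopy between loops, is supported on a \emph{finite} set, so that each such object already lives inside one of the approximating pieces $X_j$ or $Y_j$. Concretely, an $x_1$-based EC loop $\alpha:\N^*\to X$ has finite image $\{\alpha(0),\dots,\alpha(N_\alpha)\}$, and an EC homotopy $H:[0,k]_\Z\times\N^*\to X$ is a finite union of the EC paths $H_s$, each of finite image, so $\operatorname{im}(H)$ is finite as well. Since $X=\bigcup_j X_j$ with $X_j\subseteq X_{j+1}$, any finite subset of $X$ is contained in some $X_j$, and the same holds in $Y$. I will use throughout that pointedness gives $f_j(x_1)=y_1$ and $g_j(y_1)=x_1$ for every $j$, and that the homotopies $f_w|X_v\simeq f_v$ and $g_w|Y_v\simeq g_v$ of Definition~\ref{htpy-sim-def} hold $x_1$ and $y_1$ fixed.

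First I would define $F$ on classes: given $[\alpha]$, choose $j$ with $\operatorname{im}(\alpha)\subseteq X_j$ and set $F[\alpha]=[f_j\circ\alpha]$, noting that $f_j\circ\alpha$ is an EC loop in $Y_j\subseteq Y$ based at $y_1$. The symmetric recipe $G[\gamma]=[g_j\circ\gamma]$ defines a candidate inverse. The bulk of the work is well-definedness, in two steps. For independence of the chosen $j$, suppose $\operatorname{im}(\alpha)\subseteq X_v\subseteq X_w$; a pointed homotopy $P:X_v\times[0,m]_\Z\to Y_v$ between $f_w|X_v$ and $f_v$ yields $Q(s,t)=P(\alpha(t),s)$, an EC homotopy from $f_w\circ\alpha$ to $f_v\circ\alpha$ that holds the endpoints fixed precisely because $P$ holds $x_1$ fixed and $\alpha$ is eventually $x_1$; hence $[f_w\circ\alpha]=[f_v\circ\alpha]$. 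For independence of the representative, if $\alpha\simeq^{EC}\beta$ via a homotopy $H$ holding endpoints fixed, I would choose $j$ with $\operatorname{im}(H)\subseteq X_j$ (possible since $\operatorname{im}(H)$ is finite); then $f_j\circ H$ is an EC homotopy from $f_j\circ\alpha$ to $f_j\circ\beta$ holding endpoints fixed, so the classes agree.

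Next I would verify that $F$ is a homomorphism and a bijection. For the homomorphism property, choosing a single $j$ with $\operatorname{im}(\alpha)\cup\operatorname{im}(\beta)\subseteq X_j$, the identity $[f_j\circ(\alpha*\beta)]=[(f_j\circ\alpha)*(f_j\circ\beta)]$ follows from the functoriality of $\Pi_1$ under composition with the continuous map $f_j$ (equivalently, from a trivial-extension argument reconciling the differing concatenation indices $N_\alpha$ and $N_{f_j\circ\alpha}$); this gives $F([\alpha]\cdot[\beta])=F[\alpha]\cdot F[\beta]$. For bijectivity I would show $G\circ F=\mathrm{id}$ and $F\circ G=\mathrm{id}$: taking $j$ with $\operatorname{im}(\alpha)\subseteq X_j$, the pointed homotopy $g_j\circ f_j\simeq 1_{X_j}$ composed with $\alpha$, exactly as in the independence-of-$j$ step, produces an EC homotopy showing $[g_j\circ f_j\circ\alpha]=[\alpha]$, so $G(F[\alpha])=[\alpha]$; the other composite is symmetric, using $f_j\circ g_j\simeq 1_{Y_j}$.

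The main obstacle is the well-definedness, and within it the independence-of-$j$ step: it is exactly here that the compatibility hypotheses $f_w|X_v\simeq f_v$ of Definition~\ref{htpy-sim-def} are needed, and one must check carefully that composing these homotopies with the loop $\alpha$ produces genuine \emph{EC} homotopies that \emph{hold the endpoints fixed}. That both properties hold is what forces the use of the pointed versions ($f_j(x_1)=y_1$ together with the homotopies fixing $x_1$); without pointedness the construction would not land in a single based fundamental group. The finite-image observation is the enabling device that reduces everything in the infinite images $X$ and $Y$ to the finite pieces $X_j,Y_j$ where the hypotheses of Definition~\ref{htpy-sim-def} directly apply.
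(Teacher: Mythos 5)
Your proposal is correct and follows essentially the same route as the paper's proof: define $F[\alpha]=[f_j\circ\alpha]$ using the finiteness of images of EC loops and EC homotopies, establish well-definedness via the compatibility homotopies $f_w|X_v\simeq f_v$, and deduce bijectivity from $g_j\circ f_j\simeq 1_{X_j}$ and $f_j\circ g_j\simeq 1_{Y_j}$. The only cosmetic differences are that you prove bijectivity by exhibiting the explicit inverse $G[\gamma]=[g_j\circ\gamma]$ rather than checking injectivity and surjectivity separately, and you allow an arbitrary admissible index $j$ (proving independence) where the paper fixes the minimal one.
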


\begin{proof}
By hypothesis, for all indices $j$ we have
that $g_j \circ f_j$ is pointed homotopic in $X_j$, 
hence in $X$, to $1_{X_j}$ 
and $f_j \circ g_j$ is pointed homotopic in $Y_j$,
hence in $Y$, to $1_{Y_j}$.

We define a function 
$F: \Pi_1^{\kappa}(X,x_1) \rightarrow
    \Pi_1^{\lambda}(Y,y_1)$ as follows.
Let $f$ be an $x_1$-based EC loop in $X$. There is a
smallest positive integer $j$ such that
the image of $f$ is contained in $X_j$. Then
$f_j \circ f$ is a $y_1$-based loop in $Y_j$.
Define $F([f]) = [f_j \circ f]$.


Suppose $f' \in [f]_X$. For some smallest indices
$j,j'$, the images of $f,f'$ lie in
$X_j,X_{j'}$, respectively. 
Further, there is some $a > j, j'$ such that $f$ and $f'$ are EC-homotopic in $X_a$. We have:
\[ F([f])= [f_j \circ f] = 
[(f_a|X_j) \circ f] =
   [f_a \circ f] = [f_a \circ f'] =
   [(f_a|X_{j'}) \circ f']=[f_{j'} \circ f'] = 
   F([f']).
\]
Therefore, $F$ is well defined.

Suppose $f$ is an $x_1$-based EC loop in $X$ such that
$F([f])=[\overline{y_1}]$, the identity element of
$\Pi_1^{\lambda}(Y,y_1)$. If $j$ is the minimal index
such that the image of $f$ is contained in $Y_j$, then
\[ F([f]) =[\overline{y_1}]= [f_j \circ f], \]
so
\[ [f]=[(g_j \circ f_j) \circ f]=
   [g_j \circ (f_j \circ f)]=
[g_j \circ \overline{y_1}]=[\overline{x_1}], \]
the identity element of $\Pi_1^{\kappa}(X,x_1)$. Therefore,
$F$ is one-to-one.

Given a $y_1$-based EC loop $g$ in $Y$, the image of $g$ is
contained in some $Y_j$ for some smallest $j$.  If 
$j' \leq j$ is such that $j'$ is the minimal index such
that the image of $g_j \circ g$ is contained in $X_{j'}$,
then
\[ [g] = [f_j \circ g_j \circ g] = 
   [f_{j'} \circ (g_j \circ g)] = F([(g_j \circ g)]). \]
Thus, $F$ is onto.

Let $L_i$ be $x_1$-based EC loops in $X$, $i \in \{0,1\}$. 
Suppose the minimal indices for the $X_j$ containing the
images of the $L_i$ are $j_0,j_1$ respectively, where,
without loss of generality, $j_0 \leq j_1$. Then
$j_1$ is the minimal index of the $X_j$ containing
$L_0 * L_1$. Then
\begin{align*}
 F([L_0 * L_1]) &= [f_{j_1} \circ (L_0 * L_1)]=
   [f_{j_1}(L_0) * f_{j_1}(L_1)]=
   [(f_{j_1}|X_{j_0})(L_0) * f_{j_1}(L_1)] \\
&= [f_{j_0} \circ (L_0)] \cdot [f_{j_1} \circ (L_1)] = 
   F(L_0) \cdot F(L_1).
\end{align*}
Therefore, $F$ is a homomorphism.  This completes
the proof.
\end{proof}

\begin{Theorem}
\label{real-Pi1}
Let $(X,x_0) \simeq_{\kappa,\lambda}^\R(Y,y_0)$. 
Then
$\Pi_1^{\kappa}(X,x_0)$ and 
$\Pi_1^{\lambda}(Y,y_0)$ are isomorphic.
\end{Theorem}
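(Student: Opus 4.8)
The plan is to follow the classical scheme: a real homotopy equivalence induces an isomorphism of fundamental groups via the post-composition maps. First I would define, for the continuous pointed map $f:(X,x_0)\to(Y,y_0)$, a function $f_*:\Pi_1^\kappa(X,x_0)\to\Pi_1^\lambda(Y,y_0)$ by $f_*([\alpha])=[f\circ\alpha]$. Since $f(x_0)=y_0$ and $f$ is continuous, $f\circ\alpha$ is a $y_0$-based EC loop whenever $\alpha$ is an $x_0$-based EC loop, and post-composing an endpoint-fixing EC homotopy with $f$ again yields such a homotopy; hence $f_*$ is well defined. That $f_*$ is a homomorphism follows exactly as in the proof of Theorem~\ref{Pi1}, from the fact that post-composition commutes with the concatenation $*$ up to an endpoint-fixing EC homotopy that merely reparametrizes constant segments. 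I would define $g_*$ for $g:(Y,y_0)\to(X,x_0)$ in the same way.

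The key lemma, and the main obstacle, is to show that pointed real homotopic self-maps induce the same homomorphism: if $\phi,\psi:(X,x_0)\to(X,x_0)$ satisfy $\phi\simeq^\R\psi$ by a real homotopy holding $x_0$ fixed, then $\phi_*=\psi_*$. The crucial observation is that any $x_0$-based EC loop $\alpha$ has \emph{finite} image $A=\alpha(\N^*)$, with $x_0\in A$ (since $\alpha(0)=\alpha(\infty)=x_0$). Restricting a pointed real homotopy $F:X\times[0,1]\to X$ from $\phi$ to $\psi$ to $A\times[0,1]$ produces a pointed real homotopy between $\phi|_A$ and $\psi|_A$ on the finite domain $A$. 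By the finite case of Theorem~\ref{long-and-real}, $\phi|_A$ and $\psi|_A$ are then pointed digitally homotopic through an ordinary homotopy $h:A\times[0,m]_\Z\to X$ holding $x_0$ fixed.

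Given such an $h$, I would build the required EC homotopy directly by setting $\mathcal H(n,s)=h(\alpha(n),s)$ for $(n,s)\in\N^*\times[0,m]_\Z$, which is legitimate because $\alpha(n)\in A$. For each fixed $s$, the slice $n\mapsto h_s(\alpha(n))$ is the composite of the continuous map $h_s$ with the EC loop $\alpha$, hence an $x_0$-based EC loop; for each fixed $n$, the slice $s\mapsto h(\alpha(n),s)$ is a path; and since $\alpha(n)=x_0$ when $n=0$ and whenever $n\ge N_\alpha$, the homotopy holds the endpoints fixed. Its two ends are $\phi\circ\alpha$ and $\psi\circ\alpha$, so $[\phi\circ\alpha]=[\psi\circ\alpha]$, giving $\phi_*=\psi_*$.

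Finally I would assemble the isomorphism. Functoriality $(g\circ f)_*=g_*\circ f_*$ and $(1_X)_*=\mathrm{id}$ are immediate from the definitions, since $(g\circ f)\circ\alpha=g\circ(f\circ\alpha)$. Applying the lemma to the pointed hypotheses $g\circ f\simeq^\R 1_X$ and $f\circ g\simeq^\R 1_Y$ yields $g_*\circ f_*=(g\circ f)_*=(1_X)_*=\mathrm{id}$ and $f_*\circ g_*=(f\circ g)_*=(1_Y)_*=\mathrm{id}$, so $f_*$ is an isomorphism with inverse $g_*$. The delicate points to watch are the finiteness of the image of an EC loop, which is what allows the invocation of the finite case of Theorem~\ref{long-and-real}, and the verification that $h$ may be taken pointed; both hold because $\alpha$ is eventually constant at $x_0$ and $F$ holds $x_0$ fixed.
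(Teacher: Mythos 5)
Your proposal is correct and follows essentially the same route as the paper's proof: define $f_*$ by post-composition, and use the finiteness of an EC loop's image together with the finite case of Theorem~\ref{long-and-real} to upgrade the real homotopies $g\circ f\simeq^\R 1_X$ and $f\circ g\simeq^\R 1_Y$ to pointed ordinary (hence EC) homotopies on the loop's image. The only difference is organizational --- you package the argument as a lemma showing $g_*\circ f_*=\mathrm{id}$ and $f_*\circ g_*=\mathrm{id}$, while the paper phrases the same computations as injectivity and surjectivity of $f_*$.
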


\begin{proof} The hypothesis implies that there exist
pointed continuous functions
$f: (X, x_0) \rightarrow (Y, y_0)$ and
$g: (Y, y_0) \rightarrow (X, x_0)$, and pointed
long homotopies $H: X \times [0,1] \rightarrow X$
from $g \circ f$ to $1_X$ and 
$G: Y \times [0,1] \rightarrow Y$ from
$f \circ g$ to $1_Y$.

Given an $x_0$-based EC loop $L$ in $X$, define
$f_*: \Pi_1^{\kappa}(X,x_0) \rightarrow \Pi_1^{\lambda}(Y,y_0)$
by $f_*([L])=[f \circ L]$. This function $f_*$ is the homomorphism induced by $f:X\to Y$ 
(see~\cite{BoSt} for a proof that $f_*$ is a well-defined homomorphism). It remains to show that $f_*$ is one-to-one and onto.

Suppose $f_*([L])=[f \circ L] = [\overline{y_0}]$, the identity element of
$\Pi_1^{\lambda}(Y,y_0)$. Since $g\circ f \simeq^\R 1_X$, we have $g\circ f \circ L \simeq^\R L$. Since the image of $L$ is finite, an argument similar to that used in the proof of Theorem~\ref{long-and-real} shows that $g \circ f \circ L \simeq L$ by a pointed homotopy, and thus by a pointed EC homotopy. Thus,
\[ [L] = [g \circ f \circ L] = [g \circ (f\circ L)] = [g\circ \overline{y_0}] = [\overline{x_0}], \]
the identity element of $\Pi_1^\kappa(X,x_0)$. Hence, $f_*$ is one-to-one.

Suppose $M$ is a $y_0$-based EC loop in $Y$. Then 
there is a real homotopy in $Y$ between $M$ and $f \circ g \circ M$ that 
holds the endpoints fixed.
As above, since the image of $M$ is finite, the argument from Theorem \ref{long-and-real} gives
a homotopy between $M$ and $f \circ g \circ M$ that 
holds the endpoints fixed. Therefore, 
\[ [M] = [f \circ g \circ M] = f_*([g \circ M]). \]
Thus, $f_*$ is onto.
\end{proof}

\begin{Theorem}
\label{long-Pi1}
Let $(X,x_1) \simeq_{\kappa,\lambda}^L (Y,y_1)$.
Then $\Pi_1^{\kappa}(X,x_0)$ and 
$\Pi_1^{\lambda}(Y,y_0)$ are isomorphic.
\end{Theorem}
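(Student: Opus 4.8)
The plan is to derive this immediately from results already established, since all of the substantive work has been done. First I would invoke Corollary~\ref{long-implies-real-type}: the hypothesis $(X,x_1) \simeq^L (Y,y_1)$ of pointed long homotopy type gives $(X,x_1) \simeq^\R (Y,y_1)$, i.e., the same pointed real homotopy type. Then I would apply Theorem~\ref{real-Pi1} to this pointed real homotopy equivalence, which yields directly that $\Pi_1^{\kappa}(X,x_1)$ and $\Pi_1^{\lambda}(Y,y_1)$ are isomorphic. (The basepoints in the statement should read $x_1,y_1$ throughout; the $x_0,y_0$ in the conclusion appear to be a typographical slip.) This two-line argument is the cleanest route.

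Alternatively, I would give a self-contained proof that parallels the proof of Theorem~\ref{real-Pi1} but works with the long homotopies directly. Let $f:(X,x_1)\to(Y,y_1)$ and $g:(Y,y_1)\to(X,x_1)$ realize the pointed long homotopy equivalence, so $g\circ f \simeq^L 1_X$ and $f\circ g \simeq^L 1_Y$ by pointed long homotopies. I would take $f_*([L]) = [f\circ L]$, the induced homomorphism on fundamental groups (well-defined by~\cite{BoSt}), and show it is a bijection. For injectivity, if $[f\circ L]=[\overline{y_1}]$, then from $g\circ f\simeq^L 1_X$ one obtains $g\circ f\circ L \simeq^L L$; the crucial point is that an EC loop $L$ has finite image, so by Proposition~\ref{long-finite-image} this long homotopy restricts to an ordinary pointed homotopy, hence an EC homotopy, giving $[L]=[g\circ f\circ L]=[\overline{x_1}]$. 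Surjectivity is handled symmetrically, writing $[M]=[f\circ g\circ M]=f_*([g\circ M])$ after upgrading the long homotopy on the finite-image loop $M$ to an ordinary one.

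The main obstacle is essentially already resolved in the earlier development: it is the passage from a long (or real) homotopy of loops to an ordinary homotopy, which is legitimate only because EC loops have finite image and Proposition~\ref{long-finite-image} then forces the long homotopy to collapse to a finite one. Given that machinery, no genuinely new difficulty arises, so I expect the chained proof via Corollary~\ref{long-implies-real-type} and Theorem~\ref{real-Pi1} to suffice, with the direct argument available as a more explicit alternative.
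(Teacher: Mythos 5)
Your primary argument is exactly the paper's proof: pass from pointed long homotopy type to pointed real homotopy type (Corollary~\ref{long-implies-real-type}) and then apply Theorem~\ref{real-Pi1}. The proposal is correct, and your observation about the $x_0,y_0$ versus $x_1,y_1$ basepoint notation is a fair catch of a typographical slip in the statement.
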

\begin{proof}
By Theorem~\ref{long-implies-real}, $(X,\kappa,x_0)$ and $(Y,\lambda,y_0)$ have the same real homotopy type. Then Theorem \ref{real-Pi1} gives the result.
\end{proof}

\section{Wedges and Cartesian products}
In this section we show that the wedge and Cartesian product operations
preserve pointed homotopic similarity, 
pointed long homotopy type, and pointed real
homotopy type. 

Recall that
if $X_1, X_2$ are digital images in
$\Z^m$ with the same adjacency relation
$\kappa$ such that $X_1 \cap X_2 = \{x_0\}$
for some point $x_0$, and such that $x_0$ is the
only point of $X_1$ adjacent to
any point of $X_2$ and is also the
only point of $X_2$ adjacent to
any point of $X_1$,
then $X = X_1 \cup X_2$, with the
$\kappa$ adjacency, is the {\em wedge} of
$X_1$ and $X_2$, denoted $X = X_1 \wedge X_2$, and
$x_0$ is the {\em wedge point} of $X$. Also,
if $X = X_1 \wedge X_2$, $Y = Y_1 \wedge Y_2$,
$x_0$ is the wedge point of $X$, $y_0$ is the 
wedge point of $Y$,
and $f_i: (X_i,x_0) \rightarrow (Y_i,y_0)$ are
$(\kappa, \lambda)$-pointed continuous 
for $i \in \{1,2\}$, then the function 
$f_1 \wedge f_2: X \rightarrow Y$ defined by
\[ (f_1 \wedge f_2)(x) =
   \left \{ \begin{array}{ll}
      f_1(x) & \mbox{if } x \in X_1; \\
      f_2(x) & \mbox{if } x \in X_2,
      \end{array} \right .
\]
is easily seen to be $(\kappa,\lambda)$-continuous.

\begin{Theorem}
\label{wedges}
Suppose $X_1, X_2$ are digital images in
$\Z^m$, and $Y_1$ and $Y_2$ are digital images in
$\Z^n$. If
$(X_1,x_0) \simeq_{\kappa,\lambda}^s(Y_1,y_0)$
and
$(X_2,x_0) \simeq_{\kappa,\lambda}^s(Y_2,y_0)$,
and $X=X_1 \wedge X_2$ has wedge point $x_0$ and
$Y=Y_1 \wedge Y_2$ has wedge point $y_0$, then
$(X, x_0) \simeq_{\kappa,\lambda}^s (Y, y_0)$.
\end{Theorem}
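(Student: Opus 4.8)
The plan is to build the data required by Definition~\ref{htpy-sim-def} for the wedges directly from the given data for the two summands, using the wedge-of-functions construction recalled just before the theorem. Write the pointed homotopic similarities $(X_i,x_0)\simeq^s (Y_i,y_0)$ via exhausting subsets $\{X_{i,j}\}_{j}$ of $X_i$ and $\{Y_{i,j}\}_{j}$ of $Y_i$ together with maps $f_{i,j}:X_{i,j}\to Y_{i,j}$, $g_{i,j}:Y_{i,j}\to X_{i,j}$ and the three families of pointed homotopies, for $i\in\{1,2\}$. I would note at the outset that, since the similarities are pointed, the maps $f_{i,j},g_{i,j}$ may be taken to be pointed (so $f_{i,j}(x_0)=y_0$ and $g_{i,j}(y_0)=x_0$); this is needed both for the wedge maps to be defined and for the value held fixed by the wedged homotopies to be the wedge point. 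I then set $X_j=X_{1,j}\cup X_{2,j}$ and $Y_j=Y_{1,j}\cup Y_{2,j}$. Since $X_{i,j}\subseteq X_i$ and $X_1\cap X_2=\{x_0\}$ with $x_0$ in every $X_{i,j}$, each $X_j$ is itself a wedge $X_{1,j}\wedge X_{2,j}$ with wedge point $x_0$ (similarly for $Y_j$), so the wedge maps $f_j=f_{1,j}\wedge f_{2,j}:X_j\to Y_j$ and $g_j=g_{1,j}\wedge g_{2,j}:Y_j\to X_j$ are defined and continuous.

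The routine part is the first bullet of Definition~\ref{htpy-sim-def}: the nesting $X_j\subseteq X_{j+1}$, $Y_j\subseteq Y_{j+1}$ and the exhaustions $X=\bigcup_j X_j$, $Y=\bigcup_j Y_j$ follow at once from the corresponding properties of the summands. For the homotopy conditions I would first record the identities $g_j\circ f_j=(g_{1,j}\circ f_{1,j})\wedge(g_{2,j}\circ f_{2,j})$ and $f_j\circ g_j=(f_{1,j}\circ g_{1,j})\wedge(f_{2,j}\circ g_{2,j})$, which hold because the maps respect the wedge decomposition, together with $f_w|X_v=(f_{1,w}|X_{1,v})\wedge(f_{2,w}|X_{2,v})$ and the analogue for $g$.

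The crux is a single lemma I would isolate and prove: the wedge of two pointed homotopies is again a pointed homotopy. Given pointed homotopies $H_i:X_{i,j}\times[0,m_i]_{\Z}\to X_{i,j}$ holding $x_0$ fixed, I would first reparametrize to a common length $m=\max\{m_1,m_2\}$ by extending the shorter one as a constant in the time variable past its endpoint (this preserves the homotopy axioms), then define $H$ on $X_j\times[0,m]_{\Z}$ by pasting $H_1$ on $X_{1,j}$ and $H_2$ on $X_{2,j}$. This is well defined on the overlap $\{x_0\}$ precisely because both homotopies hold $x_0$ fixed; each time slice $H_t=(H_1)_t\wedge(H_2)_t$ is continuous by the wedge-of-functions principle; each induced $H_x$ is a path; and $H$ holds $x_0$ fixed. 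Applying this lemma (and its evident analogue for homotopies valued in $Y_j$) to the three families yields $g_j\circ f_j\simeq 1_{X_j}$, $f_j\circ g_j\simeq 1_{Y_j}$, and, for $v\le w$, $f_w|X_v\simeq f_v$ in $Y_v$ and $g_w|Y_v\simeq g_v$ in $X_v$, all pointed with respect to $x_0$ and $y_0$, establishing $(X,x_0)\simeq^s(Y,y_0)$.

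The main obstacle I anticipate lies entirely inside this lemma: reconciling the possibly different time-lengths $m_1,m_2$ and checking that the pasted map is genuinely a homotopy. The pointedness hypothesis is exactly what makes the paste consistent at $x_0$, while the wedge property of $X_j$ (no adjacency between $X_{1,j}\setminus\{x_0\}$ and $X_{2,j}\setminus\{x_0\}$) is what lets me verify continuity of each time slice summand-by-summand via Proposition~\ref{cont-connect}. Everything else is bookkeeping once this lemma is in hand.
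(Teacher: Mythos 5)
Your proposal is correct and follows essentially the same route as the paper's proof: wedge the exhausting subsets, wedge the maps $f_{i,j}$ and $g_{i,j}$, and wedge the corresponding pointed homotopies (the paper simply asserts this last step is ``easily seen''). Your write-up is in fact more complete than the paper's, since you isolate and justify the key lemma on pasting pointed homotopies of different time-lengths and you explicitly verify the compatibility condition $f_w|X_v \simeq f_v$ from Definition~\ref{htpy-sim-def}, which the paper's proof leaves implicit.
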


\begin{proof}
By hypothesis, there are subsets $X_{j,n}$ 
of $X_j$ and $Y_{j,n}$ 
of $Y_j$, such that
$X_{j,n} \subset X_{j+1,n}$ and 
$Y_{j,n} \subset Y_{j+1,n}$, 
for $j \in \{1,2\}$, $n \in {\N}$,
$X_j = \bigcup_{n=1}^{\infty}X_{j,n}$,
$Y_j = \bigcup_{n=1}^{\infty}Y_{j,n}$,
and pointed continuous functions
$f_n: (X_{1,n},x_0) \rightarrow (Y_{1,n}, y_0)$,
$g_n: (Y_{1,n},y_0) \rightarrow (X_{1,n}, x_0)$,
$f_n': (X_{2,n},x_0) \rightarrow (Y_{2,n}, y_0)$,
$g_n': (Y_{2,n},y_0) \rightarrow (X_{2,n}, x_0)$,
such that 
$g_n \circ f_n$ is pointed homotopic in $X_{1,n}$ to
$1_{X_{1,n}}$,
$f_n \circ g_n$ is pointed homotopic in $Y_{1,n}$ to
$1_{Y_{1,n}}$,
$g_n' \circ f_n'$ is pointed homotopic in $X_{2,n}$ to
$1_{X_{2,n}}$, and
$f_n' \circ g_n'$ is pointed homotopic in $Y_{2,n}$ to
$1_{Y_{2,n}}$. Also, $x_0$ is the wedge point for
each $X_{1,n} \wedge X_{2,n}$, and $y_0$ is the 
wedge point for each $Y_{1,n} \wedge Y_{2,n}$.

Then it is easily seen that
$(g_n \wedge g_n') \circ (f_n \wedge f_n')$ is pointed homotopic
in $X_1 \wedge X_2$ to $1_{X_1 \wedge X_2}$ and
$(f_n \wedge f_n') \circ (g_n \wedge g_n')$ is pointed homotopic
in $Y_1 \wedge Y_2$ to $1_{Y_1 \wedge Y_2}$. The assertion
follows. 
\end{proof}

\begin{Theorem}
\label{long-wedges}
Suppose $X_1, X_2$ are digital images in
$\Z^m$, and $Y_1$ and $Y_2$ are digital images in
$\Z^n$. If
$(X_1,x_0) \simeq_{\kappa,\lambda}^L (Y_1,y_0)$,
and
$(X_2,x_0) \simeq_{\kappa,\lambda}^L (Y_2,y_0)$,
and $X=X_1 \wedge X_2$ has wedge point $x_0$ and
$Y=Y_1 \wedge Y_2$ has wedge point $y_0$, then
$(X, x_0) \simeq_{\kappa,\lambda}^L (Y, y_0)$.
\end{Theorem}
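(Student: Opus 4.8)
The plan is to mirror the proof of Theorem~\ref{wedges}, replacing the homotopies of that argument by pointed long homotopies and gluing them along the common wedge point. By hypothesis and Definition~\ref{long-equiv-def}, there are pointed continuous functions $f_i\colon (X_i,x_0)\to(Y_i,y_0)$ and $g_i\colon (Y_i,y_0)\to(X_i,x_0)$ for $i\in\{1,2\}$, together with pointed long homotopies $H_i\colon X_i\times\Z\to X_i$ from $g_i\circ f_i$ to $1_{X_i}$ and $K_i\colon Y_i\times\Z\to Y_i$ from $f_i\circ g_i$ to $1_{Y_i}$. I would take as the candidate equivalence the wedge maps $f_1\wedge f_2\colon X\to Y$ and $g_1\wedge g_2\colon Y\to X$, which are $(\kappa,\lambda)$- and $(\lambda,\kappa)$-continuous by the remark preceding the theorem. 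A direct computation, using that each $f_i$ and $g_i$ preserves the base point and that $g_1\wedge g_2$ restricted to $Y_i$ equals $g_i$, gives $(g_1\wedge g_2)\circ(f_1\wedge f_2)=(g_1\circ f_1)\wedge(g_2\circ f_2)$ and likewise $(f_1\wedge f_2)\circ(g_1\wedge g_2)=(f_1\circ g_1)\wedge(f_2\circ g_2)$.

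Next I would build the required long homotopies by gluing. Define $H\colon X\times\Z\to X$ by $H(x,t)=H_1(x,t)$ when $x\in X_1$ and $H(x,t)=H_2(x,t)$ when $x\in X_2$. Because the $H_i$ are pointed, $H_1(x_0,t)=x_0=H_2(x_0,t)$ for all $t$, so the two rules agree on the overlap $X_1\cap X_2=\{x_0\}$ and $H$ is well defined; moreover $H$ holds $x_0$ fixed. I would then verify the three conditions of Definition~\ref{revised-L}. The endpoint condition holds because each $H_i$ satisfies it, so for each $x$ we may take $N_{H,x}$ to be the corresponding bound for whichever factor contains $x$; the initial slice of $H$ is exactly $(g_1\circ f_1)\wedge(g_2\circ f_2)=(g_1\wedge g_2)\circ(f_1\wedge f_2)$ and the final slice is $1_{X_1}\wedge 1_{X_2}=1_X$. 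For each fixed $x$ the induced path $H_x$ coincides with $(H_i)_x$ and is therefore $(c_1,\kappa)$-continuous. An entirely analogous construction yields $K\colon Y\times\Z\to Y$ glued from $K_1$ and $K_2$.

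The step I expect to require the most care is the continuity of each time slice $H_t\colon X\to X$. The point is that $H_t=(H_1)_t\wedge(H_2)_t$: for fixed $t$ the maps $(H_i)_t\colon(X_i,x_0)\to(X_i,x_0)$ are pointed continuous (pointed because $H_i$ holds $x_0$ fixed), and each lands in $X_i\subset X$, so their wedge is a well-defined map $X\to X$ that is $(\kappa,\kappa)$-continuous by the same wedge-of-maps fact cited above. Concretely, this works because the defining property of the wedge forbids adjacencies between $X_1\setminus\{x_0\}$ and $X_2\setminus\{x_0\}$, so any adjacent pair in $X$ lies within a single factor or involves $x_0$, where the relevant $(H_i)_t$ is already known to be continuous. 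With $H_t$ and similarly $K_t$ continuous, $H$ and $K$ are pointed long homotopies exhibiting $(g_1\wedge g_2)\circ(f_1\wedge f_2)\simeq^L 1_X$ and $(f_1\wedge f_2)\circ(g_1\wedge g_2)\simeq^L 1_Y$, so by Definition~\ref{long-equiv-def} we conclude $(X,x_0)\simeq_{\kappa,\lambda}^L(Y,y_0)$.
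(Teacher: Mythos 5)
Your proposal is correct and follows essentially the same route as the paper: take the wedge maps $f_1\wedge f_2$ and $g_1\wedge g_2$ and glue the pointed long homotopies $H_1,H_2$ (resp.\ $K_1,K_2$) along the wedge point, which is legitimate precisely because the homotopies hold $x_0$ (resp.\ $y_0$) fixed. The paper states this construction without verification, whereas you additionally check the three conditions of the definition of long homotopy, including the slice-continuity argument using the fact that the wedge admits no adjacencies between $X_1\setminus\{x_0\}$ and $X_2\setminus\{x_0\}$; this is a welcome filling-in of detail rather than a different approach.
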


{\em Proof}:
By hypothesis, for $i \in \{1,2\}$ 
there exist pointed continuous
functions $f_i: (X_i, x_0) \rightarrow (Y_i,y_0)$ and
$g_i: (Y_i,y_0) \rightarrow (X_i,x_0)$,
long pointed homotopies
$H_i: (X_i,x_0) \times \Z \rightarrow (X_i,x_0)$ from
$1_{X_i}$ to $g_i \circ f_i$ in $X_i$, and
long pointed homotopies
$K_i: (Y_i,y_0) \times \Z \rightarrow (Y_i,y_0)$ from
$1_{Y_i}$ to $f_i \circ g_i$ in $Y_i$.

Then the function
$H: (X_1 \wedge X_2, x_0) \times \Z \rightarrow 
    (X_1 \wedge X_2, x_0)$ defined by
\[ H(x,t) = \left \{ \begin{array}{ll}
             H_1(x,t) & \mbox{if } x \in X_1; \\
             H_2(x,t) & \mbox{if } x \in X_2,
             \end{array} \right .
\]
is a pointed long homotopy in $X$ from $1_{X_1 \wedge X_2}$ to
$(g_1 \wedge g_2) \circ (f_1 \wedge f_2)$.
Similarly, the function
$K: (Y_1 \wedge Y_2, x_0) \times \Z \rightarrow 
    (Y_1 \wedge Y_2, x_0)$ defined by
\[ K(y,t) = \left \{ \begin{array}{ll}
             K_1(y,t) & \mbox{if } y \in Y_1; \\
             K_2(y,t) & \mbox{if } y \in Y_2,
             \end{array} \right .
\]
is a pointed long homotopy in $Y$ from $1_{Y_1 \wedge Y_2}$ to
$(f_1 \wedge f_2) \circ (g_1 \wedge g_2)$. The assertion
follows. $\Box$

Arguments similar to those above demonstrate the following.

\begin{Theorem}
\label{real-wedges}
Suppose $X_1, X_2$ are digital images in
$\Z^m$, and $Y_1$ and $Y_2$ are digital images in
$\Z^n$. If
$(X_1,\kappa,x_0) \simeq_{\kappa,\lambda}^\R 
(Y_1,\lambda,y_0)$ and
$(X_2,x_0) \simeq_{\kappa,\lambda}^\R (Y_2,y_0)$,
and $X=X_1 \wedge X_2$ has wedge point $x_0$ and
$Y=Y_1 \wedge Y_2$ has wedge point $y_0$, then
$(X, x_0) \simeq_{\kappa,\lambda}^\R (Y, y_0)$. $\Box$
\end{Theorem}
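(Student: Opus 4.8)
The plan is to follow the template of Theorem~\ref{long-wedges}, replacing the integer time line $\Z$ by the real parameter interval $[0,1]$. Since a real homotopy already lives over the compact domain $[0,1]$ and both component homotopies will share that domain, no rescaling or concatenation is needed, so the gluing is actually simpler than in the long-homotopy case. First I would unpack the hypotheses: for $i\in\{1,2\}$ there are pointed continuous functions $f_i:(X_i,x_0)\to(Y_i,y_0)$ and $g_i:(Y_i,y_0)\to(X_i,x_0)$ together with pointed real homotopies $H_i:X_i\times[0,1]\to X_i$ from $1_{X_i}$ to $g_i\circ f_i$ and $K_i:Y_i\times[0,1]\to Y_i$ from $1_{Y_i}$ to $f_i\circ g_i$. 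The wedge maps $f_1\wedge f_2:X\to Y$ and $g_1\wedge g_2:Y\to X$ are continuous by the remark preceding Theorem~\ref{wedges}.

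Next I would define the candidate homotopy $H:X\times[0,1]\to X$ by $H(x,t)=H_1(x,t)$ for $x\in X_1$ and $H(x,t)=H_2(x,t)$ for $x\in X_2$, and symmetrically $K:Y\times[0,1]\to Y$. These are well defined because both $H_1$ and $H_2$ hold $x_0$ fixed, so they agree (at the value $x_0$) on the overlap $X_1\cap X_2=\{x_0\}$; the same pointedness shows $H$ and $K$ hold $x_0$ and $y_0$ fixed. I would then verify the three clauses of Definition~\ref{real-def}. The endpoint clause is immediate: $H(\cdot,0)=1_X$ and $H(\cdot,1)=(g_1\wedge g_2)\circ(f_1\wedge f_2)$. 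For the path clause, each slice $H_x$ equals $(H_i)_x$ for the factor $X_i$ containing $x$, and since forming the wedge introduces no new $\kappa$-adjacencies internal to $X_i$, a real $\kappa$-path in $X_i$ is a real $\kappa$-path in $X=X_1\wedge X_2$, so $H_x$ is a real path. For the continuity clause, I check that $H_t$ sends adjacent points of $X$ to equal-or-adjacent points: if two adjacent points lie in a common factor this is inherited from continuity of $(H_i)_t$; by the wedge hypothesis any adjacent pair not lying in a single factor must include the wedge point $x_0$, where both slices send $x_0$ to $x_0$, reducing that case to the single-factor one. The identical verification applies to $K$.

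Finally, having produced a pointed real homotopy $H$ from $1_X$ to $(g_1\wedge g_2)\circ(f_1\wedge f_2)$ and a pointed real homotopy $K$ from $1_Y$ to $(f_1\wedge f_2)\circ(g_1\wedge g_2)$, Definition~\ref{real-htpy-equiv} yields $(X,x_0)\simeq^\R(Y,y_0)$. The only step requiring genuine care is the adjacency bookkeeping in the two continuity-type clauses, namely confirming that the wedge structure forbids adjacencies between $X_1\setminus\{x_0\}$ and $X_2\setminus\{x_0\}$ and hence spoils neither the real-path condition on the slices $H_x$ nor the $(\kappa,\kappa)$-continuity of the slices $H_t$; everything else is the routine gluing already carried out for Theorem~\ref{long-wedges}, which is exactly why the author can write that ``arguments similar to those above'' suffice.
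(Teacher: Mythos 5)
Your proof is correct and takes essentially the same route as the paper's: the authors prove Theorem~\ref{real-wedges} by the same gluing of the two pointed real homotopies over the common parameter interval $[0,1]$ that they carried out explicitly for long homotopies in Theorem~\ref{long-wedges}, which is exactly what you do. Your added adjacency bookkeeping at the wedge point is the right detail to check and matches the intended argument.
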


Now we consider Cartesian products. For our pointed assertions in the following, we
assume
\[ x_i=(x_{i,1},x_{i,2},\ldots,x_{i,n_i})
\in X_i \subset \Z^{n_i}, 
~~~~~y_i=(y_{i,1},y_{i,2},\ldots,y_{i,n_i})
\in Y_i \subset \Z^{n_i}, \]
\[ x_0=(x_1,x_2,\ldots,x_{n_i})=
 (x_{1,1},x_{1,2},\ldots,x_{1,n_1},x_{2,1},x_{2,2},\ldots,x_{2,n_2}, \ldots, x_{k,1},x_{k,2},\ldots,x_{k,n_k}) \in 
 \Pi_{i=1}^k X_i \subset \Z^D,
\]
\[ y_0=(y_1,y_2,\ldots,y_{n_i})=
 (y_{1,1},y_{1,2},\ldots,y_{1,n_1},y_{2,1},y_{2,2},\ldots,y_{2,n_2}, \ldots, y_{k,1},y_{k,2},\ldots,y_{k,n_k}) \in 
 \Pi_{i=1}^k Y_i \subset \Z^D,
\]
where $D=\sum_{i=1}^k n_i$.

\begin{Theorem}
\label{product-thm}
Let $X_i$ and $Y_i$ be digital images in
$(\Z^{n_i}, c_{n_i})$, $i \in \{1,2,\ldots,k\}$.
Let $x_i \in X_i$, $y_i \in Y_i$. Let
$D=\sum_{i=1}^k n_i$.
\begin{itemize}
\item If $X_i \simeq_{c_{n_i},c_{n_i}} Y_i$
      for $i \in \{1,2,\ldots,k\}$, then
      $\Pi_{i=1}^k X_i \simeq_{c_D,c_D} 
      \Pi_{i=1}^k Y_i$.
      If $(X_i,x_i) \simeq_{c_{n_i},c_{n_i}}
      (Y_i,y_i)$ for 
      $i \in \{1,2,\ldots,k\}$, then
      $(\Pi_{i=1}^k X_i,x_0) \simeq_{c_D,c_D} 
      (\Pi_{i=1}^k Y_i,y_0)$.
\item If $X_i \simeq^s_{c_{n_i},c_{n_i}} Y_i$
      for $i \in \{1,2,\ldots,k\}$, then
      $\Pi_{i=1}^k X_i \simeq_{c_D,c_D}^s 
      \Pi_{i=1}^k Y_i$.
      If $(X_i,x_i) \simeq_{c_{n_i},c_{n_i}}^s 
      (Y_i,y_i)$ 
      for $i \in \{1,2,\ldots,k\}$, then
      $(\Pi_{i=1}^k X_i,x_0) \simeq_{c_D,c_D}^s 
      (\Pi_{i=1}^k Y_i,y_0)$.
\item If $X_i \simeq_{c_{n_i},c_{n_i}}^L Y_i$
      for $i \in \{1,2,\ldots,k\}$, then
      $\Pi_{i=1}^k X_i \simeq_{c_D,c_D}^L 
      \Pi_{i=1}^k Y_i$.
      If $(X_i,x_i) \simeq_{c_{n_i},c_{n_i}}^L 
      (Y_i,y_i)$ 
      for $i \in \{1,2,\ldots,k\}$, then
      $(\Pi_{i=1}^k X_i,x_0) \simeq_{c_D,c_D}^L 
      (\Pi_{i=1}^k Y_i,y_0)$.
\item If $X_i \simeq_{c_{n_i},c_{n_i}}^\R Y_i$
      for $i \in \{1,2,\ldots,k\}$, then
      $\Pi_{i=1}^k X_i \simeq_{c_D,c_D}^\R 
      \Pi_{i=1}^k Y_i$.
      If $(X_i,x_i) \simeq_{c_{n_i},c_{n_i}}^\R 
      (Y_i,y_i)$ 
      for $i \in \{1,2,\ldots,k\}$, then
      $(\Pi_{i=1}^k X_i,x_0) \simeq_{c_D,c_D}^\R 
      (\Pi_{i=1}^k Y_i,y_0)$.
\end{itemize}
\end{Theorem}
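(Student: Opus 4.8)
The plan is to base everything on a single structural fact about the product adjacency: since $D=\sum_i n_i$, two distinct points $p,q\in\Pi_{i=1}^k X_i\subset\Z^D$ are $c_D$-adjacent if and only if, writing $p=(p_1,\dots,p_k)$ and $q=(q_1,\dots,q_k)$ with $p_i,q_i\in\Z^{n_i}$, for every $i$ the projections satisfy $p_i=q_i$ or $p_i$ and $q_i$ are $c_{n_i}$-adjacent. This is immediate from the definition of $c_u$-adjacency (differ by at most $1$ in each coordinate), and it has two consequences I would record first: (i) a product $\Pi_i h_i$ of $(c_{n_i},c_{n_i})$-continuous maps $h_i$ is $(c_D,c_D)$-continuous, by Proposition~\ref{cont-connect}; and (ii) if for each $i$ the points $p_i,q_i$ are equal or $c_{n_i}$-adjacent and $p\ne q$, then $p,q$ are $c_D$-adjacent, so a ``parallel'' path in which several blocks move by at most one unit in a single step is still a legal $c_D$-path. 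Throughout, pointed component data ($f_i(x_i)=y_i$, $g_i(y_i)=x_i$, homotopies holding $x_i,y_i$ fixed) force the basepoints $x_0=(x_1,\dots,x_k)$ and $y_0=(y_1,\dots,y_k)$ to be held fixed by every product construction, so the pointed assertions come for free alongside the unpointed ones.

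For the first bullet (ordinary homotopy equivalence) I would align the component homotopies $H_i:X_i\times[0,m_i]_\Z\to X_i$ from $g_i\circ f_i$ to $1_{X_i}$ to a common length $m=\max_i m_i$ by extending each with its constant terminal value, then set $H(x,t)=(H_1(x_1,t),\dots,H_k(x_k,t))$. Each time-slice $H_t=\Pi_i (H_i)_t$ is $(c_D,c_D)$-continuous by (i), and for fixed $x$ each integer step moves every block to an adjacent-or-equal point, hence moves the product point to a $c_D$-adjacent-or-equal point by (ii); thus $H$ is a homotopy from $\Pi_i(g_i\circ f_i)=(\Pi_i g_i)\circ(\Pi_i f_i)$ to $1_{\Pi_i X_i}$, and symmetrically on the $Y$ side. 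The similarity bullet is handled by the same parallel construction applied levelwise: take $(\Pi_i X_i)_n=\Pi_i X_{i,n}$ and $(\Pi_i Y_i)_n=\Pi_i Y_{i,n}$, which nest and exhaust because the factors do; the products $\Pi_i f_{i,n}$, $\Pi_i g_{i,n}$ satisfy the homotopy conditions of Definition~\ref{htpy-sim-def} by the argument just given, and the restriction-compatibility $(\Pi_i f_{i,w})|_{\Pi_i X_{i,v}}=\Pi_i(f_{i,w}|_{X_{i,v}})\simeq_{c_D}\Pi_i f_{i,v}$ follows by taking the parallel product of the component homotopies $f_{i,w}|_{X_{i,v}}\simeq_{c_{n_i}}f_{i,v}$. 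For the long-homotopy bullet, time is $\Z$ and I would again set $H(x,t)=(H_1(x_1,t),\dots,H_k(x_k,t))$: each component is constant for $t\le -N_{i,x_i}$ and for $t\ge N_{i,x_i}$, so the product is constant outside $[-N_x,N_x]$ with $N_x=\max_i N_{i,x_i}$; the induced function $H_x$ is $(c_1,c_D)$-continuous by (ii) (simultaneous block changes are permitted), and each $H_t$ is $(c_D,c_D)$-continuous by (i), so $H$ is a long homotopy as required by Definition~\ref{revised-L}.

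The one genuinely different case, and the main obstacle, is the real-homotopy bullet. Here the naive parallel product can fail: if at some $t\in(0,1)$ two blocks both jump but with opposite alignment---one having $F_x(t)$ equal to its left one-sided value and the other equal to its right one-sided value---then the product value at $t$ agrees with neither the full left nor the full right one-sided limit, violating the last clause of Definition~\ref{real-path}. The fix I would use is to deform one block at a time. Given real homotopies $H_i:X_i\times[0,1]\to X_i$ from $g_i\circ f_i$ to $1_{X_i}$, define for each $i$ a real homotopy $\Phi_i$ of the product whose time-$s$ slice sends $x$ to $(x_1,\dots,x_{i-1},H_i(x_i,s),g_{i+1}f_{i+1}(x_{i+1}),\dots,g_k f_k(x_k))$, that is, $\Phi_i$ acts by $H_i$ on block $i$ while holding block $j$ at $1_{X_j}$ for $j<i$ and at $g_j\circ f_j$ for $j>i$. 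Each $\Phi_i$ is a real homotopy because only block $i$ ever moves, so the induced path is the real $c_{n_i}$-path $(H_i)_{x_i}$ in a single block (legal as a real $c_D$-path), while each time-slice is $(c_D,c_D)$-continuous by (i). The terminal map of $\Phi_i$ is the initial map of $\Phi_{i+1}$, and $\Phi_1$ starts at $\Pi_i(g_i\circ f_i)$ while $\Phi_k$ ends at $1_{\Pi_i X_i}$; concatenating $\Phi_1,\dots,\Phi_k$ via transitivity of real homotopy (Theorem~\ref{real-equiv}) yields $(\Pi_i g_i)\circ(\Pi_i f_i)\simeq^\R 1_{\Pi_i X_i}$, and the symmetric construction on the $Y$ side gives $(\Pi_i f_i)\circ(\Pi_i g_i)\simeq^\R 1_{\Pi_i Y_i}$. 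In the pointed case every $\Phi_i$ fixes $x_0$, since block $i$ is held at $x_i$ by the pointed $H_i$ and the remaining blocks sit at $x_j$, so the concatenation is pointed; hence $(\Pi_i X_i,x_0)\simeq^\R(\Pi_i Y_i,y_0)$, completing all four bullets.
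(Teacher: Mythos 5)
Your proof is correct, and for the first three bullets (ordinary homotopy equivalence, homotopic similarity, long homotopy type) it follows essentially the same route as the paper: align the component homotopies to a common length where needed and run them in parallel, block by block, using the fact that $c_D$-adjacency in $\Z^D$ is exactly blockwise equal-or-$c_{n_i}$-adjacent. (You are in fact slightly more thorough than the paper on the similarity bullet, since you explicitly verify the compatibility condition $f_w|X_v \simeq f_v$ of Definition~\ref{htpy-sim-def}, which the paper's proof passes over.) Where you genuinely diverge is the real-homotopy bullet. The paper disposes of it with the phrase ``minor modifications in the argument given above,'' i.e., it implicitly takes the parallel product $H(x,t)=(H_1(x_1,t),\dots,H_k(x_k,t))$ of the component real homotopies. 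You correctly observe that this naive product need not be a real homotopy: if two blocks jump at the same $t\in(0,1)$ with opposite alignments (one block's value at $t$ equal to its left one-sided constant value, the other's equal to its right one-sided constant value), then $H_x(t)$ agrees with neither one-sided value of the product path, violating the last clause of Definition~\ref{real-path}; and since the jump sets vary with $x$ over a possibly infinite $X$, one cannot in general reparametrize the factors so that all jumps stay disjoint. Your fix --- deforming one block at a time through the interpolating maps that act as $1_{X_j}$ on blocks $j<i$ and as $g_j\circ f_j$ on blocks $j>i$, then concatenating the $k$ resulting real homotopies via the transitivity established in Theorem~\ref{real-equiv} --- is sound and arguably repairs a gap the paper leaves unaddressed. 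The only cost is that your homotopy is a $k$-fold concatenation rather than a single parallel deformation; the benefit is that each stage confines its jumps to a single block, so the real-path condition of Definition~\ref{real-path} is verified trivially, and the pointed versions still come for free since every stage holds the basepoint fixed.
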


\begin{proof}
We give proofs for the unpointed assertions. In
all cases, the proof of the pointed assertion is
virtually identical to that for its unpointed
analog. We let 
$X = \Pi_{i=1}^k X_i \subset \Z^D$,
$Y = \Pi_{i=1}^k Y_i \subset \Z^D$.

First we prove the statement about ordinary homotopy equivalence.
Suppose $X_i \simeq_{c_{n_i},c_{n_i}} Y_i$
      for $i \in \{1,2,\ldots,k\}$. Then there
      exist $(c_{n_i},c_{n_i})$-continuous
      functions $f_i: X_i \to Y_i$ and
      $g_i: Y_i \to X_i$, and homotopies
      $H_i: X_i \times [0,u_i]_{\Z} \to X_i$
      from $1_{X_i}$ to $f_i \circ g_i$ and
      $K_i: Y_i \times [0,v_i]_{\Z} \to Y_i$ 
      from $1_{Y_i}$ to $g_i \circ f_i$. Without
      loss of generality, we can replace
      each $u_i$ and each $v_i$ with
      $U = \max\{u_1,v_1, \ldots, u_k,v_k\}$, since
      if $u_i < U$ then we can extend $H_i$ by
      defining $H_i(x,t)=H_i(x,u_i)=
      g_i\circ f_i(x)$ for $u_i \leq t \leq U$, and
      similarly for $K_i$.
      
      For $a_i \in X_i$, let $f: X \to Y$ be
      defined by 
      \[ f(a_1,a_2, \ldots, a_k) =
         (f_1(a_1),f_2(a_2), \ldots, f_k(a_k)). \]
      For $b_i \in Y_i$, let $g: Y \to X$ be
      defined by 
      \[ g(b_1,b_2, \ldots, b_k) =
         (g_1(b_1),g_2(b_2), \ldots, g_k(b_k)). \]
      Let $H: X \times [0,U]_{\Z} \to X$ be
      defined by
      \[ H(a_1,a_2, \ldots, a_k,t) =
         (H_1(a_1,t),H_2(a_2,t), \ldots,
          H_k(a_k,t)).
      \]
      Let $K: Y \times [0,U]_{\Z} \to Y$ be
      defined by
      \[ K(b_1,b_2, \ldots, b_k,t) =
         (K_1(b_1,t),K_2(b_2,t), \ldots,
          K_k(b_k,t)).
      \]
      It is easy to see that $H$ is a 
      $(c_D,c_D)$-homotopy from $1_X$ to
      $g \circ f$, and $K$ is a 
      $(c_D,c_D)$-homotopy from $1_Y$ to
      $f \circ g$.
      
Minor modifications in the argument
given above allow us to demonstrate the claims for long homotopy type and real homotopy type. 

It remains to prove the statement about homotopic similarity.
 Suppose $X_i \simeq_{c_{n_i},c_{n_i}}^s Y_i$
      for $i \in \{1,2,\ldots,k\}$. Then there
      exist $\{X_{i,j}\}_{j=1}^{\infty} \subset X_i$ such that $X_{i,j} \subset X_{i,j+1}$ and
      $\bigcup_{j=1}^{\infty}X_{i,j}=X_i$,
      $\{Y_{i,j}\}_{j=1}^{\infty} \subset Y_i$ such that $Y_{i,j} \subset Y_{i,j+1}$ and
      $\bigcup_{j=1}^{\infty}Y_{i,j}=Y_i$,
      continuous functions 
      $f_{i,j}: X_{i,j} \to Y_{i,j}$ and
      $g_{i,j}: Y_{i,j} \to X_{i,j}$, and 
      homotopies
      $H_{i,j}: X_{i,j} \times [0,u_{i,j}]_{\Z}
      \to X_{i,j}$ from $g_{i,j} \circ f_{i,j}$ to
      $1_{X_{i,j}}$ and
      $K_{i,j}: Y_{i,j} \times [0,v_{i,j}]_{\Z}
      \to Y_{i,j}$ from $f_{i,j} \circ g_{i,j}$ to
      $1_{Y_{i,j}}$. As above, for each $j$ we
      can replace each $u_{i,j}$ and
      each $v_{i,j}$ by 
      $U_j = \max\{u_{i,j},v_{i,j}\}_{i=1}^k$.
      
      Notice that for all $j$,
      $\Pi_{i=1}^k X_{i,j} \subset \Pi_{i=1}^k X_{i,j+1}$ and 
      $\Pi_{i=1}^k Y_{i,j} \subset \Pi_{i=1}^k Y_{i,j+1}$. Also, 
      $\bigcup_{j=1}^{\infty} \Pi_{i=1}^k X_{i,j} =
      X$ and
      $\bigcup_{j=1}^{\infty} \Pi_{i=1}^k Y_{i,j} =
      Y$.
      
      Let $f_j: \Pi_{i=1}^k X_{i,j} \to \Pi_{i=1}^k Y_{i,j}$ be
      defined by
      \[f_j(a_1,\ldots,a_k)=(f_{1,j}(a_1), \ldots,
         f_{k,j}(a_k))
      \]
      for $a_i \in X_{i,j}$.
      Let $g_j: \Pi_{i=1}^k Y_{i,j} \to \Pi_{i=1}^k X_{i,j}$ be
      defined by
      \[g_j(b_1,\ldots,b_k)=(g_{1,j}(b_1), \ldots,
         g_{k,j}(b_k))
      \]
      for $b_i \in Y_{i,j}$.
      Let $H_j: \Pi_{i=1}^k X_{i,j} \times [0,U_j]_{\Z} \to \Pi_{i=1}^k X_{i,j}$ be
      defined by 
      \[ H_j(a_1,\ldots,a_k,t)=(H_{1,j}(a_1,t), \ldots, H_{k,j}(a_k,t)), \]
      for $a_i \in X_{i,j}$.
      Let $K_j: \Pi_{i=1}^k Y_{i,j} \times [0,U_j]_{\Z} \to \Pi_{i=1}^k Y_{i,j}$ be
      defined by 
      \[ K_j(b_1,\ldots,b_k,t)=(K_{1,j}(b_1,t), \ldots, K_{k,j}(b_k,t)), \]
      for $b_i \in Y_{i,j}$.
      Then it is easily seen that $H_j$ is a
      homotopy from $f_j \circ g_j$ to $1_{X_j}$,
      and $K_j$ is a homotopy from $g_j \circ f_j$
      to $1_{Y_j}$. Therefore, $X \simeq^s Y$.
\end{proof}

\section{Further remarks and open questions}
We have introduced three notions of digital images 
having homotopic resemblance -
homotopic similarity, having the same long 
homotopy type, and having the same real
homotopy type - in both unpointed and pointed versions.
Unlike the usual definition of digital homotopy 
equivalence, these let us consider two digital 
images $X$ and $Y$ as similar
with respect to homotopy properties even if one of them 
has a component with infinite diameter
and the other does not. We have shown that two digital images
that are homotopy equivalent are homotopically similar,
have the same long homotopy type, and have the same
real homotopy type, and that
the converses hold when both images are finite; 
however, we have shown the converses to be 
false if one of the images has infinite diameter. 
We have shown that two digital images that
share any of these three pointed homotopy
resemblances have isomorphic fundamental groups. 
We have also shown that wedges preserve pointed 
homotopy similarity, pointed long homotopy type, 
and pointed real homotopy type; as do finite
Cartesian products when we use relaxed adjacencies.

In addition to several questions
stated earlier that we have not answered at this writing,
we have the following.

\begin{Question}
\label{which-implies}
(Unpointed and pointed versions:) 
      Which, if any, of homotopic similarity,
      having the same long homotopy type,
      and having the same real homotopy type,
      implies either of the others?
\end{Question}
Corollary~\ref{long-implies-real-type} is our only result
concerning Question~\ref{which-implies},
      that having the same long homotopy type implies
      having the same real homotopy type.
\begin{Question}
\label{which-pair}
(Unpointed and pointed versions:) 
Which, if any, of these relations are equivalent?
\end{Question}
As above, a negative example for Question~\ref{which-pair} 
would require a pair of digital images $(X,Y)$ in which at
least one of the members is infinite.

\section{Acknowledgment}
The suggestions of an anonymous
reviewer were quite helpful and are much appreciated.

\end{document}